\numberwithin{equation}{section}
\theoremstyle{plain}
\newtheorem{theo}{Theorem}[section]
\newtheorem{prop}[theo]{Proposition}
\newtheorem{cor}[theo]{Corollary}
\newtheorem{lem}[theo]{Lemma}
\theoremstyle{definition}
\newtheorem{defi}[theo]{Definition}
\theoremstyle{remark}
\newtheorem{rema}[theo]{Remark}
\newtheorem{fait}{Fact}
\NewDocumentCommand{\cj}{m}{\overline{#1}}
\NewDocumentCommand{\ps}{m}{\langle #1 \rangle}
\NewDocumentCommand{\id}{}{\ensuremath{\mathrm{Id}}}
\NewDocumentCommand{\oud}{}{\ensuremath{\mathbb{D}}}
\NewDocumentCommand{\cud}{}{\ensuremath{\overline{\mathbb{D}}}}
\NewDocumentCommand{\mpt}{m}{\ensuremath{\underline{#1}}}
\newcommand{\T}{\mathbb{T}}
\newcommand{\C}{\mathbb{C}}
\newenvironment{entry}
{\begin{list}{X}%
  {%
      \setlength{\labelwidth}{55pt}%
      \setlength{\leftmargin}{\labelwidth}
      \addtolength{\leftmargin}{\labelsep}%
   }%
}%
{\end{list}}   
\author[C. Badea]{Catalin Badea}
\address[C. Badea]{Univ. Lille, CNRS, UMR 8524 - Laboratoire Paul
Painlev\'e, F-59000 Lille, France}
\email{catalin.badea@univ-lille.fr}
\author[A. Renard]{Axel Renard}
\address[A. Renard]{Univ. Lille, CNRS, UMR 8524 - Laboratoire Paul
Painlev\'e, F-59000 Lille, France}
\email{axel.renard@univ-lille.fr}
\thanks{The authors acknowledge support from the Labex CEMPI (ANR-11-LABX-0007-01).}
\title[Schwarz-Pick type inequalities]{Schwarz-Pick type inequalities from an operator theoretical point of view}
\begin{document}
\frontmatter

\begin{abstract}
    We use (versions of) the von Neumann inequality for Hilbert space contractions to prove several Schwarz-Pick type inequalities. Specifically, we derive an alternate proof for a multi-point Schwarz-Pick inequality by Beardon and Minda, along with a generalized version for operators. Connections with model spaces and Peschl’s invariant derivatives are established. Finally, Schwarz-Pick inequalities for analytic functions on polydisks and for higher order derivatives are discussed. An enhanced version of the Schwarz-Pick lemma, using the notion of distinguished variety, is obtained for the bidisk.
\end{abstract}

\subjclass{47A25, 47A30, 47A08, 30A10, 30C80, 15A60}
\keywords{von Neumann inequality, Schwarz-Pick estimates, Beardon-Minda inequality, Peschl's invariant derivatives}


	\maketitle

\tableofcontents

\mainmatter

\section*{Nomenclature}

\begin{entry}

\item[$\oud$] denotes the open unit disk

\item[$\mathbb{T}$] denotes the unit circle,  $\T = \cud \backslash \oud$

\item[$\left\| \cdot \right\|_{\infty}$]  for $\mpt{\omega}=(\omega_1, \dots, \omega_n) \in \mathbb{C}^n$, we denote $\left\| \mpt{\omega}\right\|_{\infty} = \sup \{ |\omega_i| : 1 \leq i \leq n \}$

\item[$\mathcal{H}(\oud)$] is the set of functions that are holomorphic on $\oud$, $\mathcal{H}(\oud) = \mathcal{H}(\oud,\C)$ 

\item[$\mathcal{H}(\oud,\oud)$] denotes the set of functions in $\mathcal{H}(\oud)$ mapping $\oud$ to $\oud$

\item[$\mathcal{A}(\cud)$] is the disk algebra, \textit{i.e.} the set of functions that are holomorphic on $\oud$ and continuous on $\cud$

\item[$(z,w)$] denotes the complex pseudo-hyperbolic distance $(z,w) := \frac{z-w}{1-\overline{w}z}$

\item[$\rho(z,w)$] denotes the pseudo-hyperbolic distance $\rho(z,w):=|(z,w)|$

\item[$\mathrm{d}(z,w)$] denotes the hyperbolic distance $\mathrm{d}(z,w)=\tanh^{-1}(\rho(z,w))$

\item[$f^*(z,w)$] denotes the hyperbolic divided difference $f^*(z,w) : = \frac{(f(z),f(w))}{(z,w)}$

\item[$H^2(\oud)$] is the Hilbert-Hardy space of $\oud$

\item[$H^{\infty}(\oud)$] is the set of bounded holomorphic functions on $\oud$

\item[$\mathcal{B}(H,K)$] is the set of bounded linear operators from $H$ to $K$, where $H$ and $K$ are 
two complex Hilbert spaces. $\mathcal{B}(H)$ is a short for $\mathcal{B}(H,H)$

\item[$\|\cdot \|$] denotes the norm of an element of the Banach space under consideration. When $T\in \mathcal{B}(H,K)$, $\|T\|$ denotes the operator norm

\item[$T^*$] denotes the adjoint of $T$, where $T$ is a Hilbert space operator

\item[$D_T$] denotes the defect operator of a contraction $T\in\mathcal{B}(H)$, \text{i.e.} $\|T\|\le 1$. Thus $D_T = (\id - T^*T)^{1/2}$, where $\id$ is the identity operator

\item[$\sigma(T)$] denotes the spectrum of $T\in \mathcal{B}(H)$

\item[$r(T)$] denotes the spectral radius $r(T) = \sup\{|\lambda| : \lambda\in\sigma(T)\}$ of $T$

\item[$\text{Ker}(T)$] denotes the kernel of $T$

\item[$\text{Im}(T)$] denotes the range (image) of $T$


\item[$\llbracket 1 , n \rrbracket$] denotes the set of all integers $j$ with $1\le j \le n$.

\end{entry}

\section{Introduction}
\label{s:Introduction}

The Schwarz-Pick inequality, an invariant form of the Schwarz lemma, stands as a cornerstone in complex analysis. In geometric terms, it posits that a holomorphic map from the open unit disk into itself has the property of decreasing the distance between points in the hyperbolic metric.
Equivalently, if $f\in \mathcal{H}(\oud,\oud)$ and $\omega_1$ and $\omega_2$ are two points in $\oud$, then
\begin{equation}
\label{eq:Schw-Pick}
	\rho(f(\omega_1),f(\omega_2)) = \left| \frac{f(\omega_1) - f(\omega_2)}{1 - \overline{f(\omega_1)} f(\omega_2)} \right| \leq \left| \frac{\omega_1 - \omega_2}{1 - \overline{\omega_1}\omega_2} \right| = \rho(\omega_1,\omega_2).
\end{equation}
Inequality \eqref{eq:Schw-Pick} is strict for $\omega_1\neq \omega_2$ unless $f$ is a conformal automorphism of the unit disk.
Moreover, 
\begin{equation}
\label{eq:Schw-Pick-der}
	 \frac{\left|f'(\omega)\right|}{1 - \left|f(\omega)\right|^2} \le \frac{1}{1 - \left|\omega\right|^2}.
\end{equation}

The Schwarz-Pick inequalities \eqref{eq:Schw-Pick} and \eqref{eq:Schw-Pick-der} have been extended in various ways by many authors. A thorough overview of some of these advancements is provided in the comprehensive survey \cite{history}.  For the present study the following `three points' version of (\ref{eq:Schw-Pick}) by Beardon and Minda \cite{beardonMultipointSchwarzPickLemma2004} is pivotal. It involves the notion of hyperbolic divided difference $f^*(z,w)$ and states that if $f\in \mathcal{H}(\oud,\oud)$, then 
\begin{equation} \label{eq:BM1}
		\rho\left(f^*(\omega_1, \omega_2), f^*(\omega_3, \omega_2)\right) \leq \rho(\omega_1, \omega_3)
		\end{equation}
for three pairwise distinct points $\omega_1$, $\omega_2$ and $\omega_3$ in the unit disk. The Beardon-Minda inequality unifies in an elegant way many improvements of \eqref{eq:Schw-Pick}. An analogous theorem with more than three points has been proved in \cite{baribeauHyperbolicDividedDifferences2009}, where Baribeau, Rivard and Wegert also used iterated (hyperbolic) divided differences to give simpler conditions for the $n$ points Nevanlinna–Pick interpolation problem. We refer also to \cite{Abate,RivardPAMS,RivardCAOT} for related contributions.

Various operator-theoretical interpretations of the Schwarz-Pick inequality are possible. The most well-known interpretation, due to Sarason~\cite{sarasonGeneralizedInterpolationSpinfty1967}, exploits the equivalence of the Schwarz-Pick inequality with the Nevanlinna-Pick interpolation problem for two points. Consequently, the Schwarz-Pick inequality possesses an operator-theoretical significance concerning norm-preserving lifting of some operators that act on specific subspaces of the Hardy space $H^2(\oud)$. Additional generalizations can be derived using the commutant lifting theorem of Sz.-Nagy and Foias (see for instance \autocite{FoiasFrazho}). 
It is noteworthy that the commutant lifting theorem is equivalent with Ando's dilation theorem (\autocite{NikVas}). 
Further operator-theoretical interpretations of the Schwarz-Pick inequality have been explored in \autocite{AR,ADR,jocicNoncommutativeSchwarzLemma2022, kneseSchwarzLemmaPolydisk2007a, McC1}.

The starting point of this note was the natural question of looking for an operator-theoretical interpretation of the Beardon-Minda inequality. Notice that the Schwarz-Pick inequality can be obtained as a particular case of the von Neumann inequality for Hilbert space contractions. The von Neumann inequality states that if $T \in \mathcal{B}(H)$ is a bounded linear operator acting on a complex Hilbert space $H$ with $\|T\| \le 1$ and $f$ is a polynomial, then 
\begin{equation}
\label{eq:vN}
\left\| f(T)\right\| \le \sup \{ |f(z)| : |z| \le 1\} .
\end{equation}
This inequality extends to functions $f$ in the disk algebra $\mathcal{A}(\cud)$.  The inequality (\ref{eq:Schw-Pick}) is obtained (\autocite[p.17]{rosenblumHardyClassesOperator1997}, \autocite[Exercices 2.17-2.18]{paulsenCompletelyBoundedMaps2002}) when applying von Neumann's inequality (\ref{eq:vN}) to a polynomial (or an element in the disk algebra) $f$ and a specific $2 \times 2$ matrix
acting on the $2$-dimensional Hilbert space $\C^2$.  Then, an approximation argument gives (\ref{eq:Schw-Pick}) for $f\in \mathcal{H}(\oud,\oud)$. A similar derivation by Agler of the Schwarz-Pick inequality (\ref{eq:Schw-Pick}) is presented in \cite{Agler:Invent} and \cite[Chapter 8]{aglerOperatorAnalysisHilbert2020}, where this concept is ingeniously applied in to establish an operator-theoretical proof of Lempert's theorem, demonstrating the equality of the Carath\'{e}odory and Kobayashi metrics on convex domains. In this case, von Neumann's inequality is substituted with the notion of spectral set.

The primary objective of this manuscript is to leverage (versions of) the von Neumann inequality to derive Schwarz-Pick type inequalities.  
Notably, when applying the von Neumann inequality to a remarkable $3 \times 3$ matrix (the matrix of the model operator in the Takenaka-Malmquist basis), the Beardon-Minda three-point Schwarz-Pick inequality is obtained. A similar proof is given for a Beardon-Minda type inequality for derivatives and operator versions of the Schwarz-Pick and Beardon-Minda type inequalities are obtained. We also consider some Schwarz-Pick related inequalities for higher derivatives.
Additionally, we delve into the multivariable case, employing von Neumann's inequality on $n$-tuples of mutually commuting $2 \times 2$ or $3\times 3$ matrices to prove Schwarz-Pick type inequalities for the polydisk. In the case of the bidisk an improvement can be given using the notion of distinguished variety and the refined version of Ando's inequality by Agler and McCarthy \cite{AM:acta}. The reader is welcomed to notice that the multipoint Schwarz-Pick inequality of \cite{baribeauHyperbolicDividedDifferences2009} can also be derived as a consequence of the von Neumann inequality. However, explicit computations with matrices become more intricate.

{\bf Outline.} The manuscript is organized as follows. In the next section we use a theorem going back to Parrott to obtain criteria for scalar and operator $3\times 3$ matrices to have (Hilbertian) operator norm no greater than one. This is applied to a specific matrix to obtain an alternate proof of the Beardon-Minda inequality. The significance of this specific matrix with model spaces is highlighted, and a discussion concerning the equality case in \eqref{eq:Schw-Pick} is given. Also, a Beardon-Minda type inequality for derivatives, originally proved by Goluzin and Yamashita~\cite{yamashitaPickVersionSchwarz1994}, is obtained as a consequence of the von Neumann inequality. This inequality can be rephrased in terms of Peschl’s invariant derivatives. 

In Section~\ref{sect:3} we prove several operator versions of the Schwarz-Pick inequality and of the Beardon-Minda inequality. The Sylvester (operator) equation $AX-XB = Y$ plays an important role in the proofs. 

In the next section we consider the case of the polydisk. We give operator theoretical proofs of the analogues of \eqref{eq:Schw-Pick} and \eqref{eq:Schw-Pick-der} for the polydisk and discuss the Peschl’s invariant derivatives in several variables. The proofs uses a result of Knese \cite{kneseSchwarzLemmaPolydisk2007a} that the (polydisk) von Neumann’s inequality holds for $n$-tuples of $3 \times 3$ commuting contractive matrices. In the case of the bidisk we use a result of Agler and McCarthy~\cite{AM:acta} to obtained an enhanced version of the Schwarz-Pick inequality. 

In Section~\ref{sect:5} we give some Schwarz-Pick related inequalities for higher derivatives. An improvement of a classical result of F.~Wiener is proved. 

Parrott's theorem, which was essential in our proofs, is revisited in the Appendix. We hope that this will prove beneficial for readers interested in Schwarz-Pick inequalities who may not be extensively acquainted with operator theory.

\section{A three points Schwarz-Pick lemma}

In view of the preceding discussion, it is a natural question to apply the von Neumann inequality to $3 \times 3$ matrices.

\subsection{Contractive three by three matrices}
First, we need an explicit criterion to determine whenever a $3 \times 3$-upper triangular matrix is a contraction. Note that in view of Schur's decomposition theorem -- which states that every square matrix is unitary equivalent to an upper-triangular matrix -- it makes sense to restrict ourselves to that case. In essence, following the approach used for $2\times 2$ matrices, one can calculate the operator norm of a $3\times 3$ matrix acting on the Euclidean space $\mathbb{C}^3$ using the formula: 
$$ \|T\|^2 = \|T^*T\| = r(T^*T) = \sup\{|\lambda| : \text{det}(T^*T - \lambda \id) = 0\}.$$
This computation of the operator norm $\|T\|$, the largest singular value of $T$, leads to an equation of degree $3$. However, the criterion derived from this observation holds limited practical interest. An alternative approach to obtain such a criterion uses the Schur parameters (cf. \cite{constantinescuSchurParametersFactorization1996}). 
Adapting the argument in \autocite[Lemma 2.7]{gupta}, we follow here a different approach, based on a result about completion of matrices going back to Parrott (see \autocite{parrottQuotientNormSz1978, FoiasFrazho}, \autocite[Theorem 12.22]{youngIntroductionHilbertSpace1988} and \autocite{ArseneGheondea, davisNormpreservingDilationsTheir1982}). 

\begin{theo}[Parrott]\label{parrott}
	Let $H_1, H_2, K_1, K_2$ be Hilbert spaces, and suppose that the operators $\begin{bmatrix}
		A \\ C
	\end{bmatrix} \in \mathcal{B}(H_1, K_1 \oplus K_2)$ and $\begin{bmatrix}
		C & D
	\end{bmatrix} \in \mathcal{B}(H_1 \oplus H_2, K_2)$ are contractions.
	Then, $T= \begin{bmatrix}
		A & B \\ C & D
	\end{bmatrix} : H_1  \oplus H_2 \to K_1 \oplus K_2$ is a contraction if and only if there exists a contraction $W \in \mathcal{B}(H_2,K_1)$ such that $B=D_{Z^*}WD_Y - Z C^*Y$, where $Z \in \mathcal{B}(H_1, K_1)$ and $Y \in \mathcal{B}(H_2,K_2)$ are contractions such that $D=D_{C^*}Y$ and $A=ZD_C$. 
 
 Moreover, 
	\begin{enumerate}
		\item $Y$ and $Z$ can be chosen to be (respectively) $Y_0$ and $Z_0$, the solutions of minimal operator norm among all solutions of the operator equations $D=D_{C^*}Y$ and  $A=ZD_C$;
		\item If $T$ is a contraction, there exists a unique contraction $W_0$ such that \[B=D_{Z_0^*}W_0D_{Y_0} - Z_0 C^*Y_0 \; \text{ and } \; \operatorname{Im}\left(D_{Z_0^*}\right)^{\perp} \subset \operatorname{Ker}(W_0^*). \]
  This operator satisfies \[ \|W_0\| = \inf\{ \, \|W\| \, : \, B=D_{Z_0^*}WD_{Y_0} - Z_0 C^*Y_0 \} .\]
	\end{enumerate} 
\end{theo}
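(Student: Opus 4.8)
The plan is to reduce every hypothesis and conclusion to Douglas's factorization lemma (range inclusion $\Leftrightarrow$ operator inequality $\Leftrightarrow$ factorization through a contraction, together with the existence of a minimal-norm solution supported on the closure of the relevant range). First I would record that the column contractivity $\left\|\begin{bmatrix} A\\ C\end{bmatrix}\right\|\le 1$ is equivalent to $A^*A\le \id-C^*C=D_C^2$, hence by Douglas to $A=ZD_C$ for a contraction $Z\in\mathcal{B}(H_1,K_1)$; the minimal solution $Z_0$ is the one vanishing on $\overline{\operatorname{Im}(D_C)}^{\perp}$. Dually, the row contractivity gives $DD^*\le D_{C^*}^2$, hence $D=D_{C^*}Y$ with a contraction $Y\in\mathcal{B}(H_2,K_2)$ and canonical minimal solution $Y_0$. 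This disposes of the data side and makes sense of $Z_0,Y_0$ in part (1). Throughout I would use the commutation identities $CD_C=D_{C^*}C$ and $C^*D_{C^*}=D_CC^*$ (from $Cf(C^*C)=f(CC^*)C$), which are what ultimately let the algebra close.

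For the main equivalence I would write $T$ contractive as $\id-TT^*\ge 0$ and expand the $2\times2$ block. Substituting $A=ZD_C$, $D=D_{C^*}Y$ and using the identities above turns the diagonal blocks into $D_{Z^*}^2+ZC^*CZ^*-BB^*$ and $D_{C^*}D_{Y^*}^2D_{C^*}=(D_{Y^*}D_{C^*})^*(D_{Y^*}D_{C^*})$, and the off-diagonal block into $-D_{C^*}(CZ^*+YB^*)$. The cleanest way to isolate the genuinely free parameter is the elementary-rotation (Julia operator) reformulation: embed $C$, $Z$ and $Y$ into their unitary Julia operators $J_C=\begin{bmatrix} C & D_{C^*}\\ D_C & -C^*\end{bmatrix}$ (and analogues for $Z,Y$), so that conjugating $T$ by these unitaries transforms ``$T$ is a contraction'' into the contractivity of a single free block $W\in\mathcal{B}(H_2,K_1)$ sitting in isolation. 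Unwinding the conjugation then forces $B=D_{Z^*}WD_Y-ZC^*Y$, and $T$ is a contraction precisely when $\|W\|\le 1$. I expect this isolation step to be the main obstacle, because the defect operators $D_C,D_{C^*},D_Z,D_Y$ are in general not invertible, so the passage from the block-positivity inequalities to a factored form cannot use naive Schur complements; it must be routed through Douglas's lemma and closed-range/closure arguments, which is exactly where the canonical minimal solutions $Z_0,Y_0$ become essential.

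Finally, for part (2) I would specialize to $Z_0,Y_0$ and read $D_{Z_0^*}WD_{Y_0}=B+Z_0C^*Y_0$ as a two-sided Douglas factorization problem in $W$. The normalization $\operatorname{Im}(D_{Z_0^*})^{\perp}\subseteq\operatorname{Ker}(W_0^*)$ says that $W_0$ is supported on $\overline{\operatorname{Im}(D_{Z_0^*})}$ (and symmetrically on the right through $D_{Y_0}$); this pins down $W_0$ uniquely, since any two admissible $W$ differ by an operator annihilated by $D_{Z_0^*}$ on the left and $D_{Y_0}$ on the right, hence by the normalization must coincide. The identity $\|W_0\|=\inf\{\|W\| : B=D_{Z_0^*}WD_{Y_0}-Z_0C^*Y_0\}$ is then precisely the minimal-norm clause of Douglas's lemma, as any admissible $W$ acting nontrivially off $\overline{\operatorname{Im}(D_{Z_0^*})}$ or $\overline{\operatorname{Im}(D_{Y_0})}$ can only leave the norm unchanged or increase it. Assembling these three steps yields both the characterization and the supplementary claims (1)--(2).
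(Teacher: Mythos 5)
Your opening paragraph matches the paper's preliminaries (Douglas's lemma giving $A=ZD_C$, $D=D_{C^*}Y$, the canonical minimal solutions, and the intertwining identities $CD_C=D_{C^*}C$, $C^*D_{C^*}=D_CC^*$), but the second paragraph --- which is where the theorem actually lives --- contains a genuine gap. You assert that conjugating $T$ by the Julia operators of $C$, $Z$, $Y$ ``transforms `$T$ is a contraction' into the contractivity of a single free block $W$'' and that ``unwinding the conjugation then forces $B=D_{Z^*}WD_Y-ZC^*Y$''; this is precisely the assertion to be proved, and no construction is supplied. As written the step is not even well-defined: $J_C$ maps $H_1\oplus K_2$ to $K_2\oplus H_1$ while $T$ maps $H_1\oplus H_2$ to $K_1\oplus K_2$, so ``conjugating $T$ by these unitaries'' requires specifying embeddings into larger spaces and carrying out the block algebra --- and that algebra, in the Arsene--Gheondea elementary-rotation proof your sketch gestures at, \emph{is} the proof. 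You yourself flag the isolation step as ``the main obstacle'', to be ``routed through Douglas's lemma and closed-range/closure arguments'', but those arguments never appear. The paper fills exactly this hole with a concrete device: write $T$ as a column with blocks $\mathbf{A}=\begin{bmatrix} A & B\end{bmatrix}$ and $\mathbf{B}=\begin{bmatrix} C & D\end{bmatrix}$, verify the explicit factorization $\id-\mathbf{B}^*\mathbf{B}=\mathbf{S}^*\mathbf{S}$ with $\mathbf{S}=\begin{bmatrix} D_C & -C^*Y_0 \\ 0 & D_{Y_0}\end{bmatrix}$, produce an isometry $\mathbf{U}$ with $\mathbf{S}=\mathbf{U}D_{\mathbf{B}}$, apply the column lemma (in its isometry-twisted form) to get $\mathbf{A}=\mathbf{V}\mathbf{U}D_{\mathbf{B}}$, apply the row lemma to write $\mathbf{V}=\begin{bmatrix} V_1 & D_{V_1^*}W\end{bmatrix}$, and finally identify $V_1=Z_0$ via the separate Fact that $\operatorname{Im}(D_C)^{\perp}\oplus\{0\}\subset\operatorname{Im}(\mathbf{S})^{\perp}$. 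None of these steps --- the factorization, the isometry, or the identification of $V_1$ with the minimal solution $Z_0$ --- has a counterpart in your proposal, and without them both directions of the equivalence remain unproven.

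There is also a mismatch in your treatment of part (2). Your uniqueness argument quietly uses a \emph{two-sided} support normalization (you require $W_0$ to be supported on $\overline{\operatorname{Im}(D_{Z_0^*})}$ on the left ``and symmetrically on the right through $D_{Y_0}$''), whereas the statement imposes only the left condition $\operatorname{Im}(D_{Z_0^*})^{\perp}\subset\operatorname{Ker}(W_0^*)$. Under the left condition alone your difference argument does not close: from $D_{Z_0^*}(W_1-W_2)D_{Y_0}=0$ you can only conclude that $W_1=W_2$ on $\overline{\operatorname{Im}(D_{Y_0})}$, and a perturbation supported on $\operatorname{Ker}(D_{Y_0})$ is invisible both to the equation and to the left normalization (take $C=0$, $A=0$ and $D$ a coisometry with nontrivial $\operatorname{Ker}(D_{Y_0})$ to see this concretely). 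The paper sidesteps the issue by obtaining $W$ as the minimal solution of the \emph{one-sided} equation $D_{V_1^*}W=\mathbf{V}_2$, where $\mathbf{V}_2$ is the canonical second block of $\mathbf{V}$, and Douglas uniqueness applies there. So to repair your part (2) you must either route the uniqueness through such a one-sided equation, or strengthen the normalization to the two-sided one --- in which case you are proving a variant of the statement rather than the statement itself.
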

We shall call $Y_0$ and $Z_0$ the \emph{minimal solutions} and we shall refer to $W_0$ as the \emph{minimal solution} of the equation \[B=D_{Z_0^*}WD_{Y_0} - Z_0 C^*Y_0.\] A further discussion is given in the Appendix (Section~\ref{sect:6}). In particular, $T= \begin{bmatrix}
		A_1 & B \\ 0 & A_2
	\end{bmatrix}$
is a contraction if and only if $B = (I-A_1A_1^{*})^{1/2}W(I-A_2^{*}A_2)^{1/2}$ for a certain contraction $W$ and the scalar matrix 
$$ T= \begin{bmatrix}
		\omega_1 & a \\ 0 & \omega_2
	\end{bmatrix} $$
has (Euclidean) norm no greater than one if and only if  $|\omega_1| \le 1$, $|\omega_2| \le 1$ and
 $|a| \le \sqrt{1-|\omega_1|^2}\sqrt{1-|\omega_2|^2}$.

 The following result provides a criterion for determining whether a $3 \times 3$ operator matrix is a contraction, when the central entry of the matrix, $W_2$, is a strict contraction.

\begin{theo} \label{theo:OperBM}
	Let $H_1, H_2, H_3$ be three Hilbert spaces. Let $W_i \in \mathcal{B}(H_i)$, $1 \leq i \leq 3$, be three contractions and denote
 $$T=\begin{bmatrix}
		W_1 & A_1 & B \\
		0 & W_2 & A_2 \\
		0 & 0 & W_3
	\end{bmatrix} \in 
 \mathcal{B}(H_1 \oplus H_2 \oplus H_3).$$ 
 Assume that $||W_2|| < 1$. Then, $T$ is a contraction if and only if there exist three contractions $V_1 \in \mathcal{B}(H_2,H_1), V_2 \in \mathcal{B}(H_3,H_2), V_3 \in \mathcal{B}(H_3,H_1)$ such that :
	\begin{numcases}{}
		A_1 = D_{W_1^*}V_1D_{W_2}, \label{parott-3-3-cond1} \\
		A_2 = D_{W_2^*}V_2D_{W_3}, \label{parott-3-3-cond2} \\
		B = \left[D_{W_1^*}(\id-V_1V_1^*)D_{W_1^*} \right]^{1/2} V_3 \left[ D_{W_3}(\id - V_2^*V_2)D_{W_3} \right]^{1/2} \notag \\
  - D_{W_1^*}V_1W_2^*V_2D_{W_3}. \label{parott-3-3-cond3}
	\end{numcases} 
\end{theo}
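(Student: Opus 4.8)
The plan is to reduce the whole statement to a single application of Parrott's theorem (Theorem~\ref{parrott}) for a carefully chosen $2\times 2$ block partition of $T$. I would split the domain $H_1\oplus H_2\oplus H_3$ as $(H_1\oplus H_2)\oplus H_3$ and the codomain as $H_1\oplus(H_2\oplus H_3)$, so that the Parrott ``unknown'' top-right block is exactly $B$. With this choice one reads off
\[ A=\begin{bmatrix} W_1 & A_1\end{bmatrix},\qquad C=\begin{bmatrix} 0 & W_2\\ 0 & 0\end{bmatrix},\qquad D=\begin{bmatrix} A_2\\ W_3\end{bmatrix}, \]
with $A\in\mathcal B(H_1\oplus H_2,H_1)$, $C\in\mathcal B(H_1\oplus H_2,H_2\oplus H_3)$ and $D\in\mathcal B(H_3,H_2\oplus H_3)$. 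The two standing hypotheses of Parrott's theorem, that $\begin{bmatrix} A\\ C\end{bmatrix}$ and $\begin{bmatrix} C & D\end{bmatrix}$ be contractions, coincide — after deleting a zero block row, resp. a zero block column, which cannot increase the norm — with the requirement that $\begin{bmatrix} W_1 & A_1\\ 0 & W_2\end{bmatrix}$ and $\begin{bmatrix} W_2 & A_2\\ 0 & W_3\end{bmatrix}$ be contractions. By the $2\times 2$ case recorded just after Theorem~\ref{parrott}, these are precisely \eqref{parott-3-3-cond1} and \eqref{parott-3-3-cond2}, which simultaneously produce contractions $V_1,V_2$. Moreover $\begin{bmatrix} A\\ C\end{bmatrix}$ and $\begin{bmatrix} C & D\end{bmatrix}$ are submatrices of $T$, so their contractivity is automatic whenever $T$ is a contraction, settling one direction for \eqref{parott-3-3-cond1}--\eqref{parott-3-3-cond2}.

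Next I would assemble the ingredients of Parrott's completion formula $B=D_{Z^*}WD_Y-ZC^*Y$. A short computation gives the block-diagonal defects $D_C=\id_{H_1}\oplus D_{W_2}$ and $D_{C^*}=D_{W_2^*}\oplus\id_{H_3}$. Here the hypothesis $\|W_2\|<1$ enters decisively: it makes $D_{W_2}$ and $D_{W_2^*}$ invertible, so that the defining equations $A=ZD_C$ and $D=D_{C^*}Y$ have unique, hence minimal, solutions. Solving them and substituting \eqref{parott-3-3-cond1}--\eqref{parott-3-3-cond2} gives the closed forms
\[ Z=\begin{bmatrix} W_1 & D_{W_1^*}V_1\end{bmatrix},\qquad Y=\begin{bmatrix} V_2 D_{W_3}\\ W_3\end{bmatrix}. \]

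It then remains to evaluate the three pieces of Parrott's formula. Expanding $\id-ZZ^*$ and factoring $D_{W_1^*}$ out on both sides yields $D_{Z^*}=\big[D_{W_1^*}(\id-V_1V_1^*)D_{W_1^*}\big]^{1/2}$; symmetrically $D_Y=\big[D_{W_3}(\id-V_2^*V_2)D_{W_3}\big]^{1/2}$, and the correction term computes to $ZC^*Y=D_{W_1^*}V_1W_2^*V_2D_{W_3}$. Setting $V_3:=W$, the equality $B=D_{Z^*}WD_Y-ZC^*Y$ guaranteed by Parrott's theorem is read off to be exactly \eqref{parott-3-3-cond3}, and conversely any triple $(V_1,V_2,V_3)$ satisfying \eqref{parott-3-3-cond1}--\eqref{parott-3-3-cond3} recovers the hypotheses and the completion formula, forcing $T$ to be a contraction. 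This closes the equivalence in both directions.

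The conceptual heart of the argument — and the step demanding the most care — is the choice of block partition: it is arranged so that $C$ carries only the single entry $W_2$, which is what renders $D_C$ and $D_{C^*}$ block-diagonal and lets the strict contractivity of $W_2$ deliver explicit minimal solutions $Z$ and $Y$. A less symmetric grouping, such as peeling off one diagonal corner at a time, would instead force the extraction of the square root of a genuinely $2\times 2$ defect operator $(\id-\mathbf A\mathbf A^*)^{1/2}$, precisely the computation this partition is designed to circumvent. Once the partition is fixed, every remaining step is a routine, if somewhat lengthy, block-matrix calculation.
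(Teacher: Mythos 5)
Your proposal is correct and follows essentially the same route as the paper's own proof: the identical block partition $(H_1\oplus H_2)\oplus H_3 \to H_1\oplus(H_2\oplus H_3)$ with $A=\begin{bmatrix} W_1 & A_1\end{bmatrix}$, $C=\begin{bmatrix} 0 & W_2\\ 0 & 0\end{bmatrix}$, $D=\begin{bmatrix} A_2\\ W_3\end{bmatrix}$, the same verification of Parrott's hypotheses via the two $2\times 2$ compressions, and the same use of $\|W_2\|<1$ to invert $D_C$ and $D_{C^*}$ and compute $Z$, $Y$, $D_{Z^*}$, $D_Y$ and $ZC^*Y$ explicitly. The resulting identification of Parrott's completion formula with \eqref{parott-3-3-cond3} matches the paper's argument step for step.
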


\begin{proof}
		First, if $T$ is a contraction, then $\begin{bmatrix}W_1 & A_1 \\ 0 & W_2 \end{bmatrix}$ and $\begin{bmatrix}W_2 & A_2 \\ 0 & W_3 \end{bmatrix}$ are also contractions, as they are compressions of $T$. Then Parrott's theorem implies that \eqref{parott-3-3-cond1} and \eqref{parott-3-3-cond2} are satisfied. In the following we assume that \eqref{parott-3-3-cond1} and \eqref{parott-3-3-cond2} are true.
	
	Now, denote
 $$A=\begin{bmatrix}
		W_1 & A_1
	\end{bmatrix}, \quad C=\begin{bmatrix}
		0 & W_2 \\ 0 & 0
	\end{bmatrix} \quad \textrm{and} \quad D=\begin{bmatrix}
		A_2 \\ W_3
	\end{bmatrix}.$$
		By Parrott's theorem, $T$ is a contraction if and only if :
	\begin{equation} \label{parrott-appli-a-3}
		B = (\mathrm{Id}-ZZ^*)^{1/2}V_3(\mathrm{Id}-Y^*Y)^{1/2}-ZC^*Y, \end{equation}
  for an arbitrary 
  contraction $V_3 \in \mathcal{B}(H_3,H_1)$. Here $Y$ and $Z$ are contractions such that $D=(\mathrm{Id}-CC^*)^{1/2}Y$ and $A=Z(\mathrm{Id}-C^*C)^{1/2}$, the existence of which is
 ensured by Parrott's theorem for column (respectively row) matrix-operators. Indeed, $\begin{bmatrix}
		A \\ C
	\end{bmatrix}$ and $\begin{bmatrix}
		C & D
	\end{bmatrix}$ are contractions. 
	We have $\id-CC^* = \begin{bmatrix}\id - W_2W_2^* & 0 \\
		0 & \id \end{bmatrix}$ and $\id-C^*C = \begin{bmatrix}
		\id & 0 \\
		0 & \id-W_2^*W_2
	\end{bmatrix}$. 
	Since $\|W_2\| < 1$, these operators are invertible. Thus, we get \begin{align*}
	    & Z=AD_C^{-1}=\begin{bmatrix}
	W_1 & A_1 D_{W_2}^{-1}
	\end{bmatrix} = \begin{bmatrix} W_1 & D_{W_1^*}V_1\end{bmatrix}, \\ & Y=D_{C^*}^{-1}D=\begin{bmatrix}
	D_{W_2^*}^{-1}A_2 \\ W_3
	\end{bmatrix} = \begin{bmatrix}
	V_2 D_{W_3} \\ W_3
	\end{bmatrix}.
	\end{align*}
	
	It follows that $D_{Z^*}=\left[ D_{W_1^*}(\id - V_1V_1^*)D_{W_1^*}\right]^{1/2}$, $D_Y=\left[ D_{W_3}(\id - V_2^*V_2)D_{W_3} \right]^{1/2}$ and\\ $ZC^*Y=D_{W_1^*}V_1W_2^*V_2D_{W_3}$.
	Therefore, \eqref{parrott-appli-a-3} is equivalent to \eqref{parott-3-3-cond3}.
\end{proof}
We obtain the following general criterion in the scalar case.
	\begin{theo}\label{theo:gupta}
		Let $\omega_1, \omega_2, \omega_3 \in \cud$. Then, $T = \begin{pmatrix}\omega_1 & \alpha_1 & \beta \\ 0 & \omega_2 & \alpha_2 \\ 0 & 0 & \omega_3\end{pmatrix}$ is a contraction when acting on the Hilbert space $\C^3$ if and only if 
		\begin{numcases}{}
			|\omega_2| < 1, \\
			\left|\alpha_i\right|^2 \leq (1-|\omega_i|^2)(1-|\omega_{i+1}|^2)\label{cond-1}, \: i=1,2, \\
			\left|\beta (1-|\omega_2|^2)+\alpha_1\alpha_2\overline{\omega_2}\right|^2 \le \notag\\
   \left[(1-|\omega_1|^2)(1-|\omega_2|^2) - |\alpha_1|^2\right] \cdot \left[(1-|\omega_2|^2)(1-|\omega_3|^2) - |\alpha_2|^2 \right]\label{cond-3}
		\end{numcases}
  or 
  \begin{numcases}{}
|\omega_2| =1, \\
			\alpha_i =0, \: i=1,2, \\
			|\beta|^2  \leq (1-|\omega_1|^2)(1-|\omega_3|^2).\label{eq:26}
  \end{numcases}
	\end{theo}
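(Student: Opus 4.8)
The plan is to read the scalar criterion off the operator criterion \cref{theo:OperBM}, splitting into the regimes $|\omega_2|<1$ and $|\omega_2|=1$; the latter falls outside that theorem's hypothesis $\|W_2\|<1$ and will be handled directly. Suppose first $|\omega_2|<1$. I would apply \cref{theo:OperBM} with $H_1=H_2=H_3=\C$, so that $W_i=\omega_i$, $A_1=\alpha_1$, $A_2=\alpha_2$, $B=\beta$, and every defect operator collapses to the nonnegative scalar $D_{\omega_i}=D_{\omega_i^*}=\sqrt{1-|\omega_i|^2}=:d_i$. With this dictionary, \eqref{parott-3-3-cond1}--\eqref{parott-3-3-cond2} read $\alpha_1=d_1V_1d_2$ and $\alpha_2=d_2V_2d_3$ for scalars with $|V_1|,|V_2|\le 1$, and such contractions exist exactly when $|\alpha_1|\le d_1d_2$ and $|\alpha_2|\le d_2d_3$, i.e.\ when \eqref{cond-1} holds. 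It then remains to decide when a contraction $V_3$ solving \eqref{parott-3-3-cond3} exists.

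The hypothesis $d_2>0$ is what keeps the reduction transparent. Even though $V_1$ (resp.\ $V_2$) need not be unique when $|\omega_1|=1$ (resp.\ $|\omega_3|=1$), the quantities $d_1V_1=\alpha_1/d_2$ and $V_2d_3=\alpha_2/d_2$ are forced by $\alpha_1,\alpha_2$, hence so are $d_1^2(1-|V_1|^2)=d_1^2-|\alpha_1|^2/d_2^2$, $d_3^2(1-|V_2|^2)=d_3^2-|\alpha_2|^2/d_2^2$ and the cross term $d_1V_1\cj{\omega_2}V_2d_3=\alpha_1\alpha_2\cj{\omega_2}/d_2^2$. Substituting these scalars into \eqref{parott-3-3-cond3} rewrites it as
\begin{equation*}
\beta+\frac{\alpha_1\alpha_2\cj{\omega_2}}{1-|\omega_2|^2}=\left(d_1^2-\frac{|\alpha_1|^2}{1-|\omega_2|^2}\right)^{1/2}\left(d_3^2-\frac{|\alpha_2|^2}{1-|\omega_2|^2}\right)^{1/2}V_3,
\end{equation*}
whose two square roots are real precisely by \eqref{cond-1}. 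A contraction $V_3$ solving this exists iff the modulus of the left-hand side is at most the product of the two square roots; squaring and clearing the factor $(1-|\omega_2|^2)^2$ reproduces exactly \eqref{cond-3}, settling both implications when $|\omega_2|<1$. I expect the only genuine work here to be this bookkeeping — carrying the factors $1-|\omega_2|^2$ through \eqref{parott-3-3-cond3} so as to land precisely on the shape of \eqref{cond-3}, while checking that the vanishing of $d_1$ or $d_3$ causes no trouble thanks to $d_2>0$.

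For $|\omega_2|=1$, \cref{theo:OperBM} is unavailable, so I would argue directly. If $T$ is a contraction then so are its principal $2\times2$ compressions $\left(\begin{smallmatrix}\omega_1 & \alpha_1 \\ 0 & \omega_2\end{smallmatrix}\right)$ and $\left(\begin{smallmatrix}\omega_2 & \alpha_2 \\ 0 & \omega_3\end{smallmatrix}\right)$; the scalar $2\times2$ criterion recorded just after \cref{parrott}, together with $\sqrt{1-|\omega_2|^2}=0$, forces $\alpha_1=\alpha_2=0$. Permuting the coordinates $(1,2,3)\mapsto(1,3,2)$ (a unitary) then exhibits $T$ as the orthogonal sum $\left(\begin{smallmatrix}\omega_1 & \beta \\ 0 & \omega_3\end{smallmatrix}\right)\oplus(\omega_2)$, whose norm equals $\max\{\,\bigl\|\left(\begin{smallmatrix}\omega_1 & \beta \\ 0 & \omega_3\end{smallmatrix}\right)\bigr\|,\,1\,\}$ because $|\omega_2|=1$. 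Hence $T$ is a contraction iff the surviving $2\times2$ block is, i.e.\ iff $|\beta|^2\le(1-|\omega_1|^2)(1-|\omega_3|^2)$, which is \eqref{eq:26}, the converse being immediate from the same decomposition. The one conceptual point to flag is exactly this: the boundary case escapes \cref{theo:OperBM} and must be settled by the block-diagonalization above.
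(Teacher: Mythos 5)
Your proof is correct. For the main case $|\omega_2|<1$ your route coincides with the paper's: the paper likewise reduces to Theorem~\ref{theo:OperBM} (it redoes the Parrott computation, writing $Z=AD_C^{-1}$ and $Y=D_{C^*}^{-1}D$ directly in terms of $\omega_1,\omega_3,\alpha_1,\alpha_2$), arrives at exactly your displayed equation for $\beta$, and squares and clears the factor $(1-|\omega_2|^2)^2$ to land on \eqref{cond-3}. Your extra care about the non-uniqueness of $V_1,V_2$ when $|\omega_1|=1$ or $|\omega_3|=1$ is a point the paper sidesteps automatically, since its $Y$ and $Z$ are computed from the matrix entries and never mention $V_1,V_2$; your observation that $d_1V_1$, $V_2d_3$, the two defect quantities, and the cross term are nonetheless forced accomplishes the same thing. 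Where you genuinely diverge is the boundary case $|\omega_2|=1$: the paper stays inside Parrott's theorem, taking $Y=\left[\begin{smallmatrix}y_1\\y_2\end{smallmatrix}\right]$, $Z=\left[\begin{smallmatrix}z_1&z_2\end{smallmatrix}\right]$, invoking the minimal-solution conventions from the Appendix to set $y_1=0$, $z_2=0$, and checking $ZC^*Y=0$; you instead force $\alpha_1=\alpha_2=0$ from the $2\times2$ compressions and then conjugate by the permutation swapping the second and third coordinates, exhibiting $T$ as the orthogonal sum of $\left(\begin{smallmatrix}\omega_1&\beta\\0&\omega_3\end{smallmatrix}\right)$ and $(\omega_2)$. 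Your argument is more elementary — it needs only that the norm of a direct sum is the maximum of the block norms plus the scalar $2\times2$ criterion, and it avoids the minimal-solution machinery entirely — while the paper's version has the expository merit of keeping the entire proof uniformly within the Parrott framework it is showcasing. Both are complete and yield the same dichotomy.
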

	\begin{proof}
		As in the proof of Theorem~\ref{theo:OperBM}, if $T$ is a contraction, then the two dimensional compressions $\begin{bmatrix}\omega_1 & \alpha_1 \\ 0 & \omega_2 \end{bmatrix}$ and $\begin{bmatrix}\omega_2 & \alpha_2 \\ 0 & \omega_3 \end{bmatrix}$ are also contractions. Thus \eqref{cond-1} is satisfied, and it will be assumed from now on. 
  Note that if $|\omega_2| =1$, this implies that $\alpha_1 = \alpha_2 =0$.
		
		We use similar notation as in the proof of Theorem~\ref{theo:OperBM}, with 
  $$A=\begin{bmatrix}
			\omega_1 & \alpha_1
		\end{bmatrix}, \quad B = \begin{bmatrix} \beta \end{bmatrix}, \quad C=\begin{bmatrix}
			0 & \omega_2 \\ 0 & 0
		\end{bmatrix} \quad \textrm{and} \quad D=\begin{bmatrix}
			\alpha_2 \\ \omega_3
		\end{bmatrix}.$$
		
		By Theorem~\ref{parrott}, $T$ is a contraction if and only if :
		\begin{equation}{} 
			B = (\mathrm{Id}-ZZ^*)^{1/2}V(\mathrm{Id}-Y^*Y)^{1/2}-ZC^*Y, \text{ for some contraction } V,  \label{par-3} \end{equation}
		
		where $Y$ and $Z$ are contractions such that $D=(\mathrm{Id}-CC^*)^{1/2}Y$ and $A=Z(\mathrm{Id}-C^*C)^{1/2}$.
		
		We have 
		$$\mathrm{Id} - CC^* = \begin{bmatrix}1 - |\omega_2|^2 & 0 \\
			0 & 1 \end{bmatrix} $$
		and
		$$\mathrm{Id}-C^*C = \begin{bmatrix}
			1 & 0 \\
			0 & 1-|\omega_2|^2
		\end{bmatrix}.$$

 \emph{First case}. Assume first that $|\omega_2|<1$. Then we can apply Theorem~\ref{theo:OperBM}. An easy computation shows that \[Y= \left(\mathrm{Id} - CC^* \right)^{-1/2} D = \begin{bmatrix}
				\frac{\alpha_2}{\sqrt{1-|\omega_2|^2}} \\ \omega_3
			\end{bmatrix}\] and \[ Z= A \left(\mathrm{Id} - C^* C \right)^{-1/2} = \begin{bmatrix}
				\omega_1 & \frac{\alpha_1}{\sqrt{1-|\omega_2|^2}}\end{bmatrix}.\]
			Thus, $T$ is a contraction if and only if \eqref{par-3} is satisfied, that is
   $$\beta + \frac{\alpha_1 \alpha_2 \overline{\omega_2}}{1 - |\omega_2|^2} = \left(1- |\omega_1|^2 - \frac{|\alpha_1|^2}{1 - |\omega_2|^2}\right)^{1/2} V \left(1- |\omega_3|^2 - \frac{|\alpha_2|^2}{1 - |\omega_2|^2}\right)^{1/2}$$
   for some contraction $V$. This holds if and only if 
   $$\left| \left(1- |\omega_1|^2 - \frac{|\alpha_1|^2}{1 - |\omega_2|^2}\right)^{-1/2} \left(\beta + \frac{\alpha_1 \alpha_2 \overline{\omega_2}}{1 - |\omega_2|^2} \right) \left(1- |\omega_3|^2 - \frac{|\alpha_2|^2}{1 - |\omega_2|^2}\right)^{-1/2} \right| \leq 1.$$
   In can be easily shown that this is equivalent to
   the condition \eqref{cond-3}.

	 \emph{Second case}. Assume now that $|\omega_2|=1$. Let $Y=\begin{bmatrix}
				y_1 \\ y_2
			\end{bmatrix}$ and $Z= \begin{bmatrix}
				z_1 & z_2
			\end{bmatrix}$.
   As $D= (\mathrm{Id}-CC^*)^{1/2}Y$, we get $D^* = Y^* (\mathrm{Id}-CC^*)^{1/2}$. This holds if and only if $y_2=\omega_3$.
			As $Y^*$ can be chosen to be $0$ on $\text{Im}(D_C^*)^{\perp}$ (see the Appendix), we have $y_1=0$.
			
			Similarly, $A=Z (\mathrm{Id}-C^* C)^{1/2}$ holds if and only if $z_1=\omega_1$ and, as before, we can choose $z_2=0$. We have $ZC^* Y = 0$. Therefore, $T$ is a contraction if and only if $|\beta|^2 \leq (1-|\omega_3|^2)(1-|\omega_1|^2)$. This is equivalent to the condition \eqref{eq:26}.
		\end{proof}

	\subsection{An operator-theoretical proof of Beardon-Minda's inequality}
	We refer to \cite[Chapter 22]{Simon} for the definition and basic properties of \emph{divided differences} of $n+1$ (not necessarily distinct) points. We just recall here that for pairwise distinct points $z_0, z_1, \cdots , z_n \in \mathbb{C}$, the divided differences of $f$ at points $z_0, z_1, \cdots , z_n$ satisfy $[f(z_k)] = f(z_k)$ and the recurrence relation
  $$\left[f(z_k), \cdots , f(z_{k+j})\right] = \frac{\left[f(z_{k+1}), \cdots , f(z_{k+j})\right] - \left[f(z_{k}), \cdots , f(z_{k+j-1})\right] }{z_{k+j} - z_k},$$ for $ 0 \leq k \leq j \leq n$.
	
	We also recall to the reader the following notation.
		\begin{defi}
		Let $z,w \in \oud$ and $f \in \mathcal{H}(\oud,\oud)$. We define:
		\begin{enumerate}
			\item The complex pseudo-hyperbolic distance $(z,w) := \frac{z-w}{1-\overline{w}z}$;
			\item The pseudo-hyperblic distance $\rho(z,w):=|(z,w)|$;
			\item The hyperbolic distance $\mathrm{d}(z,w)=\tanh^{-1}(\rho(z,w))$;
			\item The hyperbolic divided difference $f^*(z,w) : = \frac{(f(z),f(w))}{(z,w)}$.
		\end{enumerate}
	\end{defi}	

We provide an operator-theoretic proof of the following result established by Beardon and Minda~\autocite{beardonMultipointSchwarzPickLemma2004}. 
	
	\begin{theo}[Beardon-Minda \autocite{beardonMultipointSchwarzPickLemma2004}]\label{beardon-minda}
		Let $f \in \mathcal{H}(\oud,\oud)$ 
  and let $\omega_1$, $\omega_2$ and $\omega_3$ be pairwise distinct points in $\oud$. Then,
		\begin{equation} \label{eq:BM}
		\mathrm{d}\left(f^*(\omega_1, \omega_2), f^*(\omega_3, \omega_2)\right) \leq \mathrm{d}(\omega_1, \omega_3).
		\end{equation}
	\end{theo}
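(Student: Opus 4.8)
The plan is to obtain \eqref{eq:BM} from a single application of the von Neumann inequality to a carefully chosen $3\times 3$ contraction, and then to read off the inequality from the scalar criterion of Theorem~\ref{theo:gupta}. Since $\tanh^{-1}$ is increasing on $[0,1)$, the estimate \eqref{eq:BM} is equivalent to its pseudo-hyperbolic form $\rho\bigl(f^*(\omega_1,\omega_2),f^*(\omega_3,\omega_2)\bigr)\le\rho(\omega_1,\omega_3)$, so it suffices to prove the latter.

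First I would form the finite Blaschke product $B=\frac{z-\omega_1}{1-\overline{\omega_1}z}\cdot\frac{z-\omega_2}{1-\overline{\omega_2}z}\cdot\frac{z-\omega_3}{1-\overline{\omega_3}z}$, with the three given points ordered so that $\omega_2$ occupies the middle slot, and consider the three-dimensional model space $K_B = H^2(\oud)\ominus BH^2(\oud)$ together with the compressed shift (model operator) $S_B = P_{K_B}M_z|_{K_B}$. Since $S_B$ is a contraction and $\|f\|_\infty\le 1$ (because $f$ maps $\oud$ into $\oud$), the von Neumann inequality gives $\|f(S_B)\| = \|P_{K_B}M_f|_{K_B}\|\le\|f\|_\infty\le 1$; approximating a general $f\in\mathcal{H}(\oud,\oud)$ by the dilates $f(rz)\in\mathcal{A}(\cud)$ and letting $r\to 1$ handles the extension to the disk algebra.

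Next I would compute the matrix of $f(S_B)$ in the Takenaka-Malmquist orthonormal basis $e_k = (1-|\omega_k|^2)^{1/2}(1-\overline{\omega_k}z)^{-1}\prod_{j<k}\frac{z-\omega_j}{1-\overline{\omega_j}z}$ of $K_B$. Using the reproducing-kernel identity $\langle f\,e_j,e_i\rangle = (1-|\omega_i|^2)^{1/2}(f e_j)(\omega_i)$ together with a residue computation on the unit circle, one finds that this matrix is triangular, with diagonal $f(\omega_1),f(\omega_2),f(\omega_3)$ and off-diagonal entries expressible through (weighted) divided differences of $f$ at the three points. Passing to the adjoint puts the matrix in the upper-triangular form required by Theorem~\ref{theo:gupta}, with $\omega_1,\omega_2,\omega_3$ replaced by $\overline{f(\omega_1)},\overline{f(\omega_2)},\overline{f(\omega_3)}$ and the slots $\alpha_1,\alpha_2,\beta$ filled by the conjugated matrix entries.

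Finally I would invoke Theorem~\ref{theo:gupta}: as the matrix is a contraction, conditions \eqref{cond-1} and \eqref{cond-3} must hold. The two instances of \eqref{cond-1} merely reproduce the classical two-point Schwarz-Pick inequality for the pairs $(\omega_1,\omega_2)$ and $(\omega_2,\omega_3)$, whereas the central condition \eqref{cond-3}, the one coupling all three points through the middle point $\omega_2$, is expected to be exactly the inequality sought. I anticipate that the main obstacle is precisely this last algebraic identification: one must simplify \eqref{cond-3}, with its explicit entries given by divided differences and defect factors $(1-|\omega_i|^2)^{1/2}$, and recognize that the hyperbolic divided differences $f^*(\omega_1,\omega_2)$ and $f^*(\omega_3,\omega_2)$ assemble out of these combinations, so that \eqref{cond-3} collapses to $\rho\bigl(f^*(\omega_1,\omega_2),f^*(\omega_3,\omega_2)\bigr)\le\rho(\omega_1,\omega_3)$. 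Carefully tracking the normalizing factors via the identity $1-\rho(z,w)^2 = (1-|z|^2)(1-|w|^2)|1-\overline{w}z|^{-2}$ is where the computation is most delicate.
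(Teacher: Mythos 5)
Your plan is, at its core, the paper's own proof: the contraction you build is exactly the paper's matrix $T$ (the matrix $T_3$ of the compressed shift in the Takenaka--Malmquist basis, as the paper itself points out in its subsection on model spaces), the criterion invoked is the same Theorem~\ref{theo:gupta}, and the approximation by dilates $f(rz)$ is the same. The only real difference is how the matrix and the entries of $f(T)$ are produced: the paper writes $T$ down explicitly, checks contractivity via Theorem~\ref{theo:gupta}, and verifies the divided-difference form of $f(T)$ directly on polynomials, whereas you realize $T$ as $S_B$ acting on $K_B$, which makes contractivity of $f(S_B)=P_{K_B}M_f\vert_{K_B}$ automatic but pushes the work into computing matrix entries. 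Two technical slips occur there: the identity $\langle fe_j,e_i\rangle=(1-|\omega_i|^2)^{1/2}(fe_j)(\omega_i)$ holds only for $i=1$, since for $i\ge 2$ the vector $e_i$ is not a reproducing kernel but carries the extra factor $\prod_{j<i}b_{\omega_j}$ (hence the residue computation you allude to is genuinely needed); and, with the convention $[A]_{i,j}=\langle Ae_j,e_i\rangle$, the matrix of $f(S_B)$ comes out \emph{lower} triangular (the paper's displayed upper-triangular $M_\Theta$ uses the transposed convention), so the passage to the adjoint that you mention is not optional but necessary --- though harmless, since conjugation leaves all the moduli in \eqref{cond-1} and \eqref{cond-3} unchanged.

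The genuine gap is that the decisive step is announced rather than carried out. Once von Neumann's inequality and Theorem~\ref{theo:gupta} are in place, everything hinges on showing that \eqref{cond-3}, evaluated at $\widetilde{\alpha}_i=\alpha_i[f(\omega_i),f(\omega_{i+1})]$ and $\widetilde{\beta}=\beta[f(\omega_1),f(\omega_3)]+\alpha_1\alpha_2[f(\omega_1),f(\omega_2),f(\omega_3)]$, is \emph{equivalent} to $\rho\left(f^*(\omega_1,\omega_2),f^*(\omega_3,\omega_2)\right)\le\rho(\omega_1,\omega_3)$. You say you ``anticipate'' this identification, but it is exactly here that the paper's proof expends almost all of its effort: multiplying through by $|\omega_1-\omega_3|^2$, using \eqref{diff-div-comp} and the identity \eqref{eq:important} to convert each difference $f(z)-f(w)$ into a hyperbolic divided difference, and regrouping so that $(1-|f(\omega_2)|^2)A+B$ factors as $\left(f^*(\omega_1,\omega_2)-f^*(\omega_3,\omega_2)\right)\left(1-\overline{f(\omega_2)}f(\omega_1)\right)\left(1-\overline{f(\omega_2)}f(\omega_3)\right)$. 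That computation is nontrivial and \emph{is} the proof; until it is executed (or replaced by a structural argument), your proposal is a correct and well-aimed roadmap to the paper's argument rather than a complete argument in itself.
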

The proof in~\autocite{beardonMultipointSchwarzPickLemma2004} requires an assumption that $f$ is not a conformal automorphism of the unit disk. Such an assumption is unnecessary in the subsequent proof.
	\begin{proof}[Proof of Theorem~\ref{beardon-minda}]
		First, let us notice that Beardon-Minda's inequality \eqref{eq:BM} is equivalent to
		\begin{equation}\label{bmbis}
		\rho(f^*(\omega_1, \omega_2),f^*(\omega_3, \omega_2)) = \left| \frac{f^*(\omega_1, \omega_2) - f^*(\omega_3, \omega_2)}{1 - \overline{f^*(\omega_3, \omega_2)} f^*(\omega_1, \omega_2)} \right| \leq \left| \frac{\omega_1 - \omega_3}{1 - \overline{\omega_3}\omega_1} \right|.
		\end{equation}
		For $z, \omega \in \oud$, we have
		\begin{equation}\label{diff-div-comp}
			f^*(z, \omega) = \frac{f(z) - f(\omega)}{z- \omega} \cdot \frac{1-\overline{\omega}z}{1- \overline{f(\omega)} f(z)}.
		\end{equation}
		
		We also record the following important identity, valid for $u,v \in \mathbb{C}$. We have
		\begin{equation} \label{eq:important}
		    S_{u,v} := (1-|u|^2)(1-|v|^2) = |1- \overline{u}v |^2 - |u-v|^2.
		\end{equation}
		
		Now, let $\omega_1, \omega_2, \omega_3 \in \oud$, with $\omega_i \neq \omega_j$ ($i \neq j$), and consider
  $$T=\begin{pmatrix}\omega_1 & \alpha_1 & \beta \\ 0 & \omega_2 & \alpha_2 \\ 0 & 0 & \omega_3\end{pmatrix},$$ 
  with 
$$   \alpha_i=\sqrt{1-|\omega_i|^2}\sqrt{(1-|\omega_{i+1}|^2}, \qquad i=1,2, $$
   and
   $$
  \beta=\frac{- \overline{\omega_2} \alpha_1 \alpha_2}{1-|\omega_2|^2} = -\overline{\omega_2} \sqrt{1-|\omega_1|^2}\sqrt{1-|\omega_3|^2}. $$ 
    By Theorem \ref{theo:gupta}, $T$ is a contraction. Assume first that $f \in \mathcal{A}(\cud)$ and $\|f\|_{\infty} \le 1$. Then the matrix representation of $f(T)$ can be expressed in terms of first order and second order divided differences as follows:
		\[
		f(T) = \begin{pmatrix}
			f(\omega_1) & \alpha_1[f(\omega_1), f(\omega_2)] & \beta[f(\omega_1), f(\omega_3)] + \alpha_1 \alpha_2 [f(\omega_1), f(\omega_2), f(\omega_3)] \\
			0 & f(\omega_2) & \alpha_2 [f(\omega_2), f (\omega_3)] \\
			0 & 0 & f(\omega_3)
		\end{pmatrix}.
		\]
		This can be verified directly by some direct computations for monomials and polynomials. The same formula extends to functions in the disk algebra $\mathcal{A}(\cud)$. Assume that $f(\omega_i) \neq f(\omega_j)$ whenever $i \neq j$ (otherwise, there is nothing to prove).
		As $T$ is a contraction and $\|f\|_{\infty} \le 1$, by von Neumann's inequality the operator $f(T)$ is also a contraction.

		Introducing the notation \begin{align*}
		    & \widetilde{\alpha_i} = \alpha_i[f(\omega_i), f(\omega_{i+1})], \qquad i=1,2, \\ 
      & \widetilde{\beta}=\beta[f(\omega_1), f(\omega_3)] + \alpha_1 \alpha_2 [f(\omega_1), f(\omega_2), f(\omega_3)],
		\end{align*}
		
	by Theorem~\ref{theo:gupta}, we have :
		
		\[
		\left|\widetilde{\beta} \left(1-|f(\omega_2)|^2\right)+ \widetilde{\alpha_1} \widetilde{\alpha_2} \overline{f(\omega_2)}\right|^2 \leq \left[S_{f(\omega_1), f(\omega_2)} - |\widetilde{\alpha_1}|^2\right] \times
		\left[S_{f(\omega_2), f(\omega_3)} - |\widetilde{\alpha_2}|^2 \right].
		\]
		
		If we multiply each side of this inequality by $|\omega_1 - \omega_3|^2 $, we get
		\begin{equation}\label{ineq_mat_mod}
			S_{\omega_1,\omega_3}\left|\left(1-|f(\omega_2)|^2\right)A + B \right|^2 \leq |\omega_1 - \omega_3|^2 C_1 C_3 ,
		\end{equation}
		where 
		\begin{align*} 
			A & :=- \overline{\omega_2} \left(f(\omega_1) - f(\omega_3) \right) + \left(1-|\omega_2|^2\right) \left( \frac{f(\omega_1)-f(\omega_2)}{\omega_1-\omega_2}-\frac{f(\omega_2)-f(\omega_3)}{\omega_2-\omega_3}\right) ,  \\ 
			B & :=\overline{f(\omega_2)}(1-|\omega_2|^2)(\omega_1-\omega_3)\cdot \frac{(f(\omega_1)-f(\omega_2))(f(\omega_2)-f(\omega_3))}{(\omega_1-\omega_2)(\omega_2-\omega_3)} ,\\
			C_i & := S_{f(\omega_i), f(\omega_2)} - S_{\omega_i, \omega_2}\left|\frac{f(\omega_i)-f(\omega_2)}{\omega_i -\omega_2}\right|^2, \qquad i=1,3.
		\end{align*}
		
		We want to prove that \eqref{ineq_mat_mod} is equivalent to \eqref{bmbis}. The calculations are somewhat laborious; the key idea is to use \eqref{diff-div-comp} to make hyperbolic divided differences appear each time we see an expression of the form $f(z)-f(\omega)$. We provide additional details to assist the reader.
		
		First of all, we have :
		\begin{align*}
			C_i & = S_{f(\omega_i), f(\omega_2)} - S_{\omega_i,\omega_2} \left| f^*(\omega_i,\omega_2)\right|^2 \times \left|\frac{1-\overline{f(\omega_2)}f(\omega_i)}{1-\overline{\omega_2}\omega_i}\right|^2 \\
			& \begin{multlined}
				= \left|1-\overline{f(\omega_2)}f(\omega_i)\right|^2 - \left|f^*(\omega_i,\omega_2)\right|^2 \times \left|\omega_i -\omega_2 \right|^2 \times \left|\frac{1-\overline{f(\omega_2)}f(\omega_i)}{1-\overline{\omega_2}\omega_i}\right|^2  \\- \left| f^*(\omega_i,\omega_2)\right|^2 \times \left|1-\overline{f(\omega_2)}f(\omega_i)\right|^2  + \left|f^*(\omega_i,\omega_2)\right|^2 \times \left|\omega_i -\omega_2 \right|^2 \times \left|\frac{1-\overline{f(\omega_2)}f(\omega_i)}{1-\overline{\omega_2}\omega_i}\right|^2
			\end{multlined} \\
			& = \left|1-\overline{f(\omega_2)}f(\omega_i)\right|^2 \left(1-\left|f^*(\omega_i,\omega_2)\right|^2\right).
		\end{align*}
		
		Thus, we have 
  $$C_1C_3 = \left|1-\overline{f(\omega_2)}f(\omega_1)\right|^2 \times \left|1-\overline{f(\omega_2)}f(\omega_3)\right|^2 \times S_{f^*(\omega_1,\omega_2), f^*(\omega_3,\omega_2)}.$$
		
		Now, let us deal with the first member of the inequality. We have
		\begin{align*}
			A & = f^*(\omega_1,\omega_2)\left(1-\overline{f(\omega_2)}f(\omega_1)\right) - f^*(\omega_3,\omega_2)\left(1-\overline{f(\omega_2)}f(\omega_3)\right) \\
			& = \left( f^*(\omega_1,\omega_2) - f^*(\omega_3,\omega_2)\right) \left(1-\overline{f(\omega_2)}f(\omega_1)\right)\left(1-\overline{f(\omega_2)}f(\omega_3)\right) + \overline{f(\omega_2)} D,
		\end{align*}
		
		where $D := f^*(\omega_1,\omega_2)f(\omega_3)\left(1-\overline{f(\omega_2)}f(\omega_1)\right) - f^*(\omega_3,\omega_2)f(\omega_1)\left(1-\overline{f(\omega_2)}f(\omega_3)\right)$.
		
		This term can be written as follows, where we make appear the differences $f(\omega_3)-f(\omega_2)$ and $f(\omega_1)-f(\omega_2)$:
		\begin{align*}
			& \begin{multlined} D =
				f^*(\omega_1,\omega_2)\left(f(\omega_3)-f(\omega_2)\right)\left( 1-\overline{f(\omega_2)}f(\omega_1)\right) \\ - f^*(\omega_3,\omega_2)\left(f(\omega_1) -f(\omega_2)\right)\left( 1-\overline{f(\omega_2)}f(\omega_3)\right) + f(\omega_2)f^*(\omega_1,\omega_2)\left(1-\overline{f(\omega_2)}f(\omega_1)\right) \\ - f(\omega_2)f^*(\omega_3,\omega_2)\left(1-\overline{f(\omega_2)}f(\omega_3)\right).
			\end{multlined}
   \end{align*}
   We obtain 
   \begin{align*}
    D & = \begin{multlined}[t]
				 f^*(\omega_1,\omega_2)f^*(\omega_3,\omega_2)\left(\frac{ 1-\overline{f(\omega_2)}f(\omega_3)}{1-\overline{\omega_2} \omega_3}\right) (\omega_3-\omega_2) \left(1-\overline{f(\omega_2)}f(\omega_1)\right)  \\
				- f^*(\omega_3,\omega_2)f^*(\omega_1,\omega_2)\left(\frac{ 1-\overline{f(\omega_2)}f(\omega_1)}{1-\overline{\omega_2} \omega_1}\right) (\omega_1-\omega_2) \left(1-\overline{f(\omega_2)}f(\omega_3)\right) + f(\omega_2) A 
			\end{multlined} \\
			& = - (1-|\omega_2|^2)(\omega_1-\omega_3)\cdot \frac{(f(\omega_1)-f(\omega_2))(f(\omega_2)-f(\omega_3))}{(\omega_1-\omega_2)(\omega_2-\omega_3)} + f(\omega_2) A.
		\end{align*}
		
		Hence, we get \[ A= \left( f^*(\omega_1,\omega_2) - f^*(\omega_3,\omega_2)\right) \left(1-\overline{f(\omega_2)}f(\omega_1)\right)\left(1-\overline{f(\omega_2)}f(\omega_3)\right)-B+|f(\omega_2)|^2A .\] Therefore \[(1-|f(\omega_2)|^2)A + B = \left( f^*(\omega_1,\omega_2) - f^*(\omega_3,\omega_2)\right) \left(1-\overline{f(\omega_2)}f(\omega_1)\right)\left(1-\overline{f(\omega_2)}f(\omega_3)\right). \]
		
	Combining all of these elements, the inequality represented by \eqref{ineq_mat_mod} transforms into
		
		\[
		S_{\omega_1, \omega_3} \left| f^*(\omega_1,\omega_2) - f^*(\omega_3,\omega_2)\right|^2 \leq |\omega_1 - \omega_3|^2 \times S_{f^*(\omega_1,\omega_2), f^*(\omega_3,\omega_2)},
		\]
		
		which is equivalent to \eqref{bmbis}.
		
		Beardon-Minda's inequality is thus proved for $f \in \mathcal{A}(\cud)$. Now, for $f \in \mathcal{H}(\oud)$, we have $f_r : z \mapsto f(rz) \in \mathcal{A}(\cud)$, for every $r \in ]0,1[$. Based on the preceding information, it can be concluded that Beardon-Minda's inequality is satisfied by the functions $f_r$, for all $r \in ]0,1[$, so it is also by $f$, by letting $r \to 1^-$.
	\end{proof}
	The calculations in this proof can be somewhat simplified by assuming that $f(\omega_2)=0$ and composing with a Möbius transformation at the end. However, this approach leads to a loss of symmetry in the formulas.
	\subsection{Connecting with model spaces theory}
 In \autocite{beardonMultipointSchwarzPickLemma2004}, it is further proved that if $f$ does not represent an automorphism of the unit disk, equality holds in \Cref{beardon-minda} if and only if $f$ is a Blaschke product of degree no greater than $2$. This inference can also be derived through operator theory considerations.

To achieve this, we must introduce certain concepts from model space theory. For a comprehensive introduction to these notions and more details, we direct the reader to \autocite{garciaIntroductionModelSpaces2016} and \autocite{Nikolski}.
	
	Let $H^{\infty}(\oud)$ be the set of all holomorphic functions that are bounded on $\oud$, and let $H^2(\oud)$ be the Hardy-Hilbert space of $\oud$, which is the space of all holomorphic functions $f \in \mathcal{H}(\oud)$ such that \[\sup_{0 < r < 1}{\int_{\mathbb{T}}{|f(\zeta)|^2 \mathrm{d}m(\zeta)}} < \infty \] ($m$ is the Haar measure on $\mathbb{T}$) or, equivalently, such that \[
	\sum_{n=0}^{\infty}|a_n|^2 < \infty \quad \text{ if } f(z)=\sum_{n=0}^{\infty}a_nz^n.
	\]
	
	Let $S : H^2(\oud) \to H^2(\oud)$ be the \emph{unilateral shift}, defined by $S(f)(z) = zf(z)$. 
	For $f \in H^{\infty}(\oud)$, Fatou's theorem (see \textit{e.g.} \autocite[theorem 1.10]{garciaIntroductionModelSpaces2016}) states that $f$ has radial boundary values $f(\zeta)$, for almost every $\zeta \in \mathbb{T}$. A function $ u \in H^{\infty}(\oud)$ is said to be \emph{inner} if $|u(\zeta)|=1$ almost everywhere on $\mathbb{T}$. 
	
	If $u$ is an inner function, the corresponding \emph{model space} $\mathcal{K}_u$ is defined to be 
	\[
	\mathcal{K}_u := \left(uH^2(\oud)\right)^{\perp} = \left\{f \in H^2(\oud) \: : \: \langle f, uh \rangle =0, \: \forall \, h \in H^2(\oud)\right\}.
	\]
	We define the associated \emph{compressed shift} by $S_u : = P_u S \vert_{\mathcal{K}_u}$, where $P_u$ is the orthogonal projection from $H^2(\oud)$ onto $\mathcal{K}_u$. 
		
	 Now, let $\Theta$ be a finite Blaschke product with pairwise distinct zeros $\omega_1, \dots, \omega_n \in \oud$ and let $b_{\omega_k}(z)=\frac{z - \omega_k}{1 - \overline{\omega_k}z}$ denote a single Blaschke factor. Let $(\phi_1(z), \dots, \phi_n(z))$ denote the Takenaka--Malmquist--Walsh orthonormal basis (\autocite{garciaIntroductionModelSpaces2016, Nikolski}) of $\mathcal{K}_{\Theta}$, \textit{i.e.}
	\[
	\phi_1(z)=\frac{\sqrt{1- |\omega_1|^2}}{1-\overline{\omega_1}z} \quad \text{  and  } \quad \phi_k(z)=\left(\prod_{j=1}^{k-1}{b_{\omega_j}}\right)\frac{\sqrt{1- |\omega_k|^2}}{1-\overline{\omega_k}z} \quad k=2, \dots, n.
	\]
	Writing $S_{\Theta}$ with respect to the Takenaka--Malmquist basis gives the matrix representation $M_ {\Theta}$ with entries
	\[
	\left[M_{\Theta}\right]_{i,j} = \left\lbrace\begin{matrix}
		\omega_j & \text{if $i$=$j$} \\ 
		\prod_{k=i+1}^{j-1}{(-\overline{\omega_k})\sqrt{1-|\omega_i|^2}\sqrt{1-|\omega_j|^2}} & \text{ if $i$<$j$}  \\ 
		0  & \text{ if $i$>$j$}.
	\end{matrix}\right.
	\]
	
	It seems that the first appearance of this remarkable matrix was in \cite{Young2}; see also \cite{PtakLAA,PtakYoung,Nikolski,FoiasFrazho,Szehr}. In particular, for $n=2$ and $n=3$, we obtain the following matrices
	
	\begin{align*} & T_2 := \begin{pmatrix}
		\omega_1 & \sqrt{1-|\omega_1|^2}\sqrt{1-|\omega_2|^2} \\
		0 & \omega_2
	\end{pmatrix}, \\
 & T_3:= \begin{pmatrix}\omega_1 & \sqrt{1-|\omega_1|^2}\sqrt{(1-|\omega_2|^2} & -\overline{\omega_2}\sqrt{(1-|\omega_1|^2}\sqrt{(1-|\omega_3|^2)} \\ 0 & \omega_2 & \sqrt{1-|\omega_2|^2}\sqrt{(1-|\omega_3|^2}\\ 0 & 0 & \omega_3\end{pmatrix},
\end{align*}
which have been used to obtain the Schwarz-Pick and Beardon-Minda inequalities.

We now show how to obtain the equality case in the Beardon-Minda inequality using the matrix $T_3$. It follows from our proof of Theorem~\ref{beardon-minda} that $f$ satisfies \eqref{eq:BM} with equality if and only if $\|f(T_3)\| = 1=\|f\|_{\infty}$. The fact that $f$ is a finite Blaschke product of degree $\le 2$ has been proved by several authors, sometimes in relation to Crouzeix's conjecture. This can be generalized to $n$ points. We refer 
to the discussion in~\cite[Theorem 3.1]{bickelCrouzeixConjectureRelated2020}. An explicit description of the matrix which diagonalize $M_ {\Theta}$ is also given in~\cite{bickelCrouzeixConjectureRelated2020}.

We plan to return to the general Beardon-Minda type inequality $\|f(M_{\Theta})\| \leq 1$ in a future paper. 
	
	\subsection{A Beardon-Minda type lemma for derivatives}\label{s:beardon-minda-limit-case}
	
	We now investigate the case where $\omega_1=\omega_2=\omega_3=:\omega$. For a holomorphic function $f$ we use the 
 notation 
 $$\Gamma(z, f) = \frac{(1-|z|^2)|f'(z)|}{1-|f(z)|^2}.$$
 The Schwarz-Pick inequality for derivatives \eqref{eq:Schw-Pick-der} can then be expressed as $|\Gamma(z,f)| \le 1.$
 
We now give an operator theoretical proof of the following result, proved by Goluzin and \citeauthor{yamashitaPickVersionSchwarz1994} (see \autocite[Theorem 2]{yamashitaPickVersionSchwarz1994}).
	
	\begin{theo}\label{yamashita-theo}
		Let $f \in \mathcal{H}(\oud,\oud)$ and let $\Gamma(z, f) = \frac{(1-|z|^2)|f'(z)|}{1-|f(z)|^2}$.
		Then, for every $\omega \in \oud$, 
		\begin{equation}\label{yamashita}
		\left| \frac{\partial \Gamma(\omega, f)}{\partial \omega} \right| \leq \frac{1 - |\Gamma(\omega, f)|^2 }{1-|\omega|^2}.
		\end{equation}
		Moreover, equality holds if and only if $f$ is a Blaschke product of degree $\leq 2$.
	\end{theo}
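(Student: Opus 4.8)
The plan is to specialize the model matrix $T_3$ to the \emph{confluent} case $\omega_1=\omega_2=\omega_3=\omega$ and feed it to the von Neumann inequality, exactly as in the proof of Theorem~\ref{beardon-minda}; the only new ingredient is that confluent divided differences produce derivatives, namely $[f(\omega),f(\omega)]=f'(\omega)$ and $[f(\omega),f(\omega),f(\omega)]=\tfrac12 f''(\omega)$. Before computing I would first reduce to the normalized situation $\omega=0$ and $f(0)=0$. This is legitimate because $\Gamma(\cdot,f)$ is the modulus of the hyperbolic derivative $f^h(\omega)=\frac{(1-|\omega|^2)f'(\omega)}{1-|f(\omega)|^2}$, and $f^h$ obeys the chain rule $(\psi\circ f\circ\phi)^h(\zeta)=\psi^h(f(\phi(\zeta)))\,f^h(\phi(\zeta))\,\phi^h(\zeta)$. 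Since automorphisms $\phi,\psi$ of $\oud$ satisfy $|\phi^h|\equiv|\psi^h|\equiv 1$, choosing $\phi$ with $\phi(0)=\omega$ and $\psi$ with $\psi(f(\omega))=0$ gives $F:=\psi\circ f\circ\phi\in\mathcal{H}(\oud,\oud)$ with $F(0)=0$ and $\Gamma(\zeta,F)=\Gamma(\phi(\zeta),f)$. Differentiating this identity holomorphically at $\zeta=0$ (as $\phi$ is holomorphic, the antiholomorphic part drops out) and using $\phi'(0)=1-|\omega|^2$ shows that \eqref{yamashita} at $\omega$ for $f$ is equivalent to the same inequality at $0$ for $F$. (Alternatively one could run the argument at a general $\omega$ with $T_3$, paralleling Theorem~\ref{beardon-minda}, at the cost of a longer Wirtinger computation.)

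\textbf{The normalized computation.} With $\omega=0$ and $f(0)=0$ the matrix $T_3$ becomes the nilpotent truncated shift $N=\left(\begin{smallmatrix}0&1&0\\0&0&1\\0&0&0\end{smallmatrix}\right)$ on $\C^3$, which is a contraction. Since $N^3=0$ and $f(0)=0$, the holomorphic functional calculus gives $f(N)=f'(0)N+\tfrac12 f''(0)N^2$, the upper-triangular matrix with diagonal $0$, super-diagonal entries $\alpha_1=\alpha_2=f'(0)$ and corner entry $\beta=\tfrac12 f''(0)$. By the von Neumann inequality \eqref{eq:vN} (after the usual approximation $f_r(z)=f(rz)$, $r\to1^-$, as in the proof of Theorem~\ref{beardon-minda}) we get $\|f(N)\|\le\|f\|_\infty\le 1$, so $f(N)$ is a contraction. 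Applying the scalar criterion of Theorem~\ref{theo:gupta} in its first case, with all diagonal entries equal to $0$ so that the term $\alpha_1\alpha_2\overline{\omega_2}$ and the factors $1-|\omega_i|^2$ trivialize, condition \eqref{cond-3} reduces precisely to $\tfrac12|f''(0)|\le 1-|f'(0)|^2$.

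\textbf{Identification with \eqref{yamashita}.} It remains to match the two sides of this inequality with those of \eqref{yamashita} at $0$. Here $\Gamma(0,f)=|f'(0)|$, so the right-hand side is $1-|f'(0)|^2=\frac{1-|\Gamma(0,f)|^2}{1-|0|^2}$. For the left-hand side I would compute the Wirtinger derivative of $\Gamma^2=\frac{(1-|\omega|^2)^2|f'(\omega)|^2}{(1-|f(\omega)|^2)^2}$ at $\omega=0$; using $f(0)=0$, all the $\overline{\omega}$- and $\overline{f}$-terms vanish and one obtains $\frac{\partial\Gamma^2}{\partial\omega}(0)=f''(0)\overline{f'(0)}$, whence $\bigl|\frac{\partial\Gamma}{\partial\omega}\bigr|(0)=\tfrac12|f''(0)|$ whenever $f'(0)\neq0$. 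This closes the proof of \eqref{yamashita} when $f'(\omega)\neq0$, the set $f'(\omega)=0$ being handled by continuity since the inequality is closed. The point I expect to require the most care is exactly this identification: $\Gamma$ is the modulus of a holomorphic-type quantity and is therefore not differentiable at the zeros of $f'$, and it is precisely the normalization $\omega=0$, $f(0)=0$ that makes the Wirtinger computation short and transparent.

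\textbf{Equality case.} For the equality statement I would argue as in the model-space discussion following Theorem~\ref{beardon-minda}: after normalization, equality in \eqref{yamashita} is equivalent to equality in \eqref{cond-3}, i.e. to $\|f(N)\|=1$. Since $N$ is the compressed shift on the model space $\mathcal{K}_{z^3}$ and $\|f(N)\|\le\|f\|_\infty\le 1$, equality forces $\|f(N)\|=\|f\|_\infty=1$. By the same reasoning used there for $T_3$, together with the description of the extremal functions for the model operator in \cite{bickelCrouzeixConjectureRelated2020}, this occurs if and only if $f$ is a finite Blaschke product of degree at most $2$, which is the asserted characterization.
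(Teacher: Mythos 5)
Your proposal is correct and is essentially the paper's own proof: both hinge on applying von Neumann's inequality to the confluent $3\times 3$ model matrix (whose entries are now derivatives rather than divided differences) and then invoking the contraction criterion of Theorem~\ref{theo:gupta}, with the equality case delegated in the same way to the model-space discussion and \cite{bickelCrouzeixConjectureRelated2020}. The only deviation is your preliminary M\"obius normalization $\omega=0$, $f(0)=0$, which turns $T_3$ into the nilpotent Jordan block and shortens the computation — precisely the simplification the authors themselves point out (after the proof of Theorem~\ref{beardon-minda}) as an alternative, at the cost of transferring the work into the invariance argument for $\Gamma$; note also that your care about non-differentiability of $\Gamma$ at zeros of $f'$ addresses an issue the paper glosses over, since its final inequality is likewise identified with \eqref{yamashita} only where $f'(\omega)\neq 0$.
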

	
	\begin{proof}
		Let $\omega \in \oud$, and let $T = \begin{pmatrix}
			\omega & \alpha & \beta \\
			0 & \omega & \alpha \\
			0 & 0 & \omega
		\end{pmatrix} \in \mathcal{M}_3(\mathbb{C})$, with $\alpha=1-|\omega|^2$ and $\beta=-\cj{\omega}(1-|\omega|^2)$.
		By Theorem~\ref{theo:gupta}, $T$ is a contraction. Moreover, we can easily check that for $f$ in the disk algebra we have 
  $$f(T) = \begin{pmatrix}
			f(\omega) & \alpha f'(\omega) & \frac{1}{2} \alpha^2 f''(\omega) + \beta f'(\omega) \\
			0 & f(\omega) & \alpha f'(\omega) \\
			0 & 0 & f(\omega)
		\end{pmatrix}.$$ 
  In this representation, the divided differences have been replaced in this limit case by first and second-order derivatives. By von Neumann's inequality, $f(T)$ is a contraction. Using Theorem~\ref{theo:gupta} we obtain:
		\[
		\left| \left(\frac{1}{2}\alpha^2 f''(\omega) + \beta f'(\omega) \right) \left( 1 - |f(\omega)|^2 \right) + \alpha^2 f'(\omega)^2\cj{f(\omega)} \right| \leq \left(1 - |f(\omega)|^2 \right)^2 - \left| \alpha f'(\omega) \right|^2,
		\]
		which is equivalent to \eqref{yamashita}. A proof of the equality case can be obtained using model spaces, as discussed in the preceding subsection.
	\end{proof}

The inequality \eqref{yamashita} can be rephrased in terms of \emph{Peschl's invariant derivatives}. Let $f \in \mathcal{H}(\oud, \oud)$, let $\omega \in \oud$, and consider the mapping
\begin{equation}\label{eq:peschl}
  g : z \in \oud \mapsto \frac{f\left(\frac{z+\omega}{1+\cj{\omega}z}\right)-f(\omega)}{1-\cj{f(\omega)}f\left( \frac{z+\omega}{1+\cj{\omega}z}\right)} \in \C.  
\end{equation}

Then $g$ is analytic on $\oud$ and $g(0)=0$. We have $g(z)=\sum_{n=1}^{\infty} \frac{D_nf(\omega)}{n!}z^n$, with $D_nf(\omega) := g^{(n)}(0)$. The quantities $D_nf(\omega)$ are called \emph{Peschl's invariant derivatives} (see \textit{e.g.} \autocite{kimInvariantDifferentialOperators2007}). 

The first two values of Peschl's invariant derivatives are explicitely computed as:

\begin{align*}
	& D_1f(\omega)= \frac{(1-|\omega|^2)f'(\omega)}{1-|f(\omega)|^2}, \\
	& D_2f(\omega) = \frac{(1-|\omega|^2)^2}{1-|f(\omega)|^2}\left[f''(\omega)- \frac{2\cj{\omega}f'(\omega)}{1-|\omega|^2}+\frac{2 \cj{f(\omega)}f'(\omega)^2}{1-|f(\omega)|^2}\right].
\end{align*}

With these notations, the Schwarz-Pick inequality for derivatives \eqref{eq:Schw-Pick-der} can be restated as 
 $|D_1f(\omega)| \leq 1$, while \eqref{yamashita} can be written as $|D_2f(\omega)| \leq 2( 1 -|D_1f(\omega)|^2)$.

 We refer to \autocite[Proposition 3.4]{choMultipointSchwarzPickLemma2012} for a different proof of \eqref{yamashita} and to Section \ref{section5} for a generalization to the polydisk.

\section{Operator versions of Beardon-Minda's inequality} \label{sect:3}
We move now to operator versions of the Schwarz-Pick and Beardon-Minda inequalities. The first operator generalization for the Schwarz-Pick inequality has been proved by Ky Fan in~\cite{FanAnalyticfunctionsproper1978}; the following discussion has been inspired by the recent paper~\autocite{jocicNoncommutativeSchwarzLemma2022}.

We recall the following theorem concerning the Sylvester equation
 $AX-XB = Y,$ which has been studied \textit{e.g.} in \autocite{bhatiaHowWhySolve1997, rosenblumOperatorEquationBXXA1956}.

\begin{theo}[Rosenblum,\autocite{bhatiaHowWhySolve1997}]\label{rosenblum-sylvester} Let $H, K$ be two Hilbert spaces. Let $A \in \mathcal{B}(H)$ and $B \in \mathcal{B}(K)$ be two operators with $\sigma(A) \cap \sigma(B) = \emptyset$. Then, for every $Y \in \mathcal{B}(H,K)$, the Sylvester equation $AX-XB=Y$ has a unique solution $X$. Moreover, if $\Gamma$ is a union of closed contours in the plane with total winding numbers $1$ around $\sigma(A)$ and $0$ around $\sigma(B)$, the solution can be expressed as 
\[ X = \frac{1}{2 i \pi} \int_{\Gamma} (A - \xi)^{-1} Y (B - \xi)^{-1} d\xi .\] 
\end{theo}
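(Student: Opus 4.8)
The plan is to prove existence by verifying the explicit contour integral directly, and to establish uniqueness separately, so that the displayed formula is identified as \emph{the} unique solution. Since $\sigma(A)$ and $\sigma(B)$ are disjoint compact subsets of $\mathbb{C}$, one can first fix a system of contours $\Gamma$ surrounding $\sigma(A)$ with total winding number $1$ and $\sigma(B)$ with total winding number $0$; along such a $\Gamma$ both resolvents $\xi \mapsto (A-\xi)^{-1}$ and $\xi \mapsto (B-\xi)^{-1}$ are norm-continuous, so the integrand $(A-\xi)^{-1}Y(B-\xi)^{-1}$ is norm-continuous and the integral converges in operator norm to a bounded operator $X$.

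First I would check that this $X$ solves $AX-XB=Y$ by a direct computation. Inserting $X$ and pushing $A$ through the left resolvent and $B$ through the right resolvent via the absorption identities $A(A-\xi)^{-1}=\id+\xi(A-\xi)^{-1}$ and $(B-\xi)^{-1}B=\id+\xi(B-\xi)^{-1}$, the two terms proportional to $\xi(A-\xi)^{-1}Y(B-\xi)^{-1}$ cancel, leaving
\[ AX-XB=\frac{1}{2i\pi}\int_{\Gamma}\Big(Y(B-\xi)^{-1}-(A-\xi)^{-1}Y\Big)\,d\xi. \]
The two remaining integrals are then evaluated by the Riesz holomorphic functional calculus. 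Writing $(A-\xi)^{-1}=-(\xi-A)^{-1}$ and likewise for $B$, one uses that $\Gamma$ winds once around all of $\sigma(A)$, giving $\tfrac{1}{2i\pi}\int_{\Gamma}(\xi-A)^{-1}\,d\xi=\id$, while $\Gamma$ has winding number $0$ around $\sigma(B)$, so $\tfrac{1}{2i\pi}\int_{\Gamma}(\xi-B)^{-1}\,d\xi=0$ by Cauchy's theorem (the $B$-resolvent being holomorphic off $\sigma(B)$). Hence the first integral vanishes and the second contributes $(-\id)Y$, which yields $AX-XB=Y$.

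For uniqueness it suffices to show the homogeneous equation $AX'=X'B$ forces $X'=0$. From $AX'=X'B$ one obtains $(\xi-A)X'=X'(\xi-B)$ for every $\xi$, hence $(\xi-A)^{-1}X'=X'(\xi-B)^{-1}$ for all $\xi\notin\sigma(A)\cup\sigma(B)$. Integrating this identity over the same $\Gamma$ and applying the two Riesz evaluations above gives $X'=\id\cdot X'=X'\cdot 0=0$. Consequently the operator produced by the contour integral is the unique solution of $AX-XB=Y$, which proves both assertions.

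The substantive point is the evaluation of the two leftover resolvent integrals through the Riesz functional calculus: this is exactly where the winding-number hypotheses enter, the contour being required to enclose the whole of $\sigma(A)$ (producing the identity) while avoiding $\sigma(B)$ entirely (producing zero). The one thing to justify with care is that $\Gamma$ can be chosen to realise both conditions simultaneously, which is possible precisely because $\sigma(A)\cap\sigma(B)=\emptyset$ permits separating the two compacta by disjoint open sets; everything else reduces to the routine resolvent bookkeeping sketched above.
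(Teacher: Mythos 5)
Your proof is correct. Note that the paper does not prove this theorem at all: it is recalled as a classical result of Rosenblum with a citation to Bhatia--Rosenthal, so there is no internal argument to compare against; your proof (direct verification of the contour formula via the absorption identities, the Riesz-calculus evaluations $\tfrac{1}{2\pi i}\int_{\Gamma}(\xi-A)^{-1}\,d\xi=\mathrm{Id}$ and $\tfrac{1}{2\pi i}\int_{\Gamma}(\xi-B)^{-1}\,d\xi=0$, and the intertwining trick $(\xi-A)^{-1}X'=X'(\xi-B)^{-1}$ for uniqueness) is exactly the standard one in those references. The only steps left implicit --- that a cycle with the required winding numbers exists because the two spectra are disjoint compacta, and that the identity-integral is evaluated by deforming $\Gamma$ to a large circle and using the Neumann series of the resolvent --- are routine and you correctly flagged the former.
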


\subsection{An operator version of the Schwarz-Pick inequality}
The following result is a counterpart of \autocite[Theorem 3.5]{jocicNoncommutativeSchwarzLemma2022}. When specialized to scalars, it reduces to the Schwarz-Pick inequality for two distinct points.

\begin{theo}\label{schwarz-pick-op-sylvester}
	Let $H_1, H_2$ be two Hilbert spaces. Consider three contractions $W_1 \in \mathcal{B}(H_1)$, $W_2 \in \mathcal{B}(H_2)$ and $V \in \mathcal{B}(H_2, H_1)$. Assume that $\sigma(W_1) \cap \sigma(W_2) = \emptyset$, and that $f \in \mathcal{H}(\oud,\oud)$ is holomorphic on an open neighborhood of $\sigma(W_1) \cup \sigma(W_2)$. We denote by $X=X_{W_1, W_2,V}$ the unique solution of Sylvester's equation 
 \begin{equation} \label{eq:31} W_1X-XW_2= D_{W_ 1^*}VD_{W_2}.
 \end{equation}
	
	Then, there exists a contraction $Y \in \mathcal{B}(H_2,H_1)$ such that \[f(W_1)X-Xf(W_2)=D_{f(W_1)^*}YD_{f(W_2)}.\]
\end{theo}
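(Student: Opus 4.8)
The plan is to realize both the hypothesis and the conclusion as instances of the $2\times2$ contraction criterion derived from Parrott's theorem, bridged by the von Neumann inequality. First I would form the block upper-triangular operator
\[
T=\begin{bmatrix} W_1 & D_{W_1^*}VD_{W_2} \\ 0 & W_2 \end{bmatrix}\in\mathcal{B}(H_1\oplus H_2).
\]
Since $V$ is a contraction, the $2\times2$ case of Parrott's theorem recorded immediately after Theorem~\ref{parrott} shows at once that $T$ is a contraction.

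Next I would apply the holomorphic functional calculus. As $T$ is block triangular, $\sigma(T)=\sigma(W_1)\cup\sigma(W_2)$, so $f$ is holomorphic on a neighborhood of $\sigma(T)$ and $f(T)$ is well defined; moreover $f(T)$ is again block upper-triangular,
\[
f(T)=\begin{bmatrix} f(W_1) & R' \\ 0 & f(W_2) \end{bmatrix},
\]
for some $R'\in\mathcal{B}(H_2,H_1)$. The crucial point is the identification $R'=f(W_1)X-Xf(W_2)$. To establish it I would read off the $(1,2)$ entry of the commutation identity $f(T)T=Tf(T)$, which shows that $R'$ solves the Sylvester equation $W_1Z-ZW_2=f(W_1)R-Rf(W_2)$, where $R:=D_{W_1^*}VD_{W_2}$; a short computation using $W_1X-XW_2=R$ and the commutation of $W_i$ with $f(W_i)$ shows that $f(W_1)X-Xf(W_2)$ solves the very same equation. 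Since $\sigma(W_1)\cap\sigma(W_2)=\emptyset$, Rosenblum's theorem (Theorem~\ref{rosenblum-sylvester}) guarantees uniqueness of the solution, whence the identification.

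With $R'$ identified, it remains to show that $f(T)$ is a contraction and then to extract $Y$. Because $T$ is a contraction and $f$ maps $\oud$ into $\oud$, the von Neumann inequality \eqref{eq:vN} should yield $\|f(T)\|\le 1$; I would make this rigorous by the usual approximation, applying \eqref{eq:vN} to $f_r(z)=f(rz)\in\mathcal{A}(\cud)$, which satisfies $\sup_{|z|\le1}|f_r(z)|=\sup_{|w|\le r}|f(w)|\le 1$, and then letting $r\to1^-$. Finally, $f(W_1)$ and $f(W_2)$ are themselves contractions (apply the same reasoning, or the scalar Schwarz-Pick lemma, to $W_1$ and $W_2$ separately), so the $2\times2$ Parrott criterion applied to the contraction $f(T)$ furnishes a contraction $Y\in\mathcal{B}(H_2,H_1)$ with $R'=D_{f(W_1)^*}YD_{f(W_2)}$. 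Combined with $R'=f(W_1)X-Xf(W_2)$, this is exactly the desired conclusion.

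The step I expect to require the most care is the von Neumann passage to the limit when $\sigma(T)$ meets the unit circle $\T$, where $f$ need not lie in the disk algebra and is only assumed holomorphic near $\sigma(T)$. The remedy is that each $f_r$ is holomorphic on a neighborhood of $\cud$ with $\sup_{\cud}|f_r|\le 1$, and that on a fixed complex neighborhood of $\sigma(T)$ on which $f$ is holomorphic one has $f_r\to f$ uniformly, so that norm-continuity of the holomorphic functional calculus yields $f_r(T)\to f(T)$ and hence $\|f(T)\|\le 1$. By contrast, the algebraic identification of $R'$ is comparatively routine once the uniqueness of the Sylvester solution has been invoked.
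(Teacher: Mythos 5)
Your proof is correct, and its skeleton is the same as the paper's: the same operator $T=\begin{bmatrix} W_1 & D_{W_1^*}VD_{W_2}\\ 0 & W_2\end{bmatrix}$, Parrott's theorem to see that $T$ is a contraction, von Neumann's inequality to see that $f(T)$ is one too, and Parrott again to extract $Y$. The one step you do differently is the identification of the $(1,2)$ entry of $f(T)$: the paper uses the Sylvester equation to write the explicit similarity $T=\begin{bmatrix}\id & -X\\ 0&\id\end{bmatrix}\begin{bmatrix}W_1&0\\0&W_2\end{bmatrix}\begin{bmatrix}\id&X\\0&\id\end{bmatrix}$, from which $f(T)=\begin{bmatrix}f(W_1) & f(W_1)X-Xf(W_2)\\ 0 & f(W_2)\end{bmatrix}$ is read off in closed form, whereas you leave the corner of $f(T)$ unknown, extract from $Tf(T)=f(T)T$ the Sylvester equation it satisfies, and invoke uniqueness from Rosenblum's theorem (Theorem~\ref{rosenblum-sylvester}). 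The two mechanisms are equivalent, both resting on the disjointness of the spectra; the paper's similarity is slightly more economical because it never needs uniqueness for the corner, only for $X$ itself. Two further remarks. First, your claim that block triangularity alone gives $\sigma(T)=\sigma(W_1)\cup\sigma(W_2)$ is false for general Hilbert space operators -- only the inclusion $\sigma(T)\subseteq\sigma(W_1)\cup\sigma(W_2)$ holds (the bilateral shift, written block-triangularly with respect to $\ell^2(\mathbb{Z}_{\ge 0})\oplus\ell^2(\mathbb{Z}_{<0})$, has spectrum $\T$ while both diagonal blocks have spectrum $\cud$); fortunately only the inclusion is needed for $f(T)$ to be defined, and equality does hold here, but because $T$ is similar to $W_1\oplus W_2$, not because it is triangular. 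Second, your careful handling of the von Neumann step -- passing through $f_r(z)=f(rz)\in\mathcal{A}(\cud)$ and using norm continuity of the Riesz--Dunford calculus on a fixed neighbourhood of $\sigma(T)$ -- supplies an approximation argument that the paper leaves implicit when it applies the inequality directly to $f\in\mathcal{H}(\oud,\oud)$; this is a point in your write-up's favour.
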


\begin{proof}
	Let $T=\begin{bmatrix}
		W_1 & D_{W_1^*}VD_{W_2} \\
		0	& W_2
	\end{bmatrix}$. Denote $C = D_{W_1^*}VD_{W_2}$. By Parrott's theorem, $T$ is a contraction. Moreover, using \eqref{eq:31}, we have 
 \[ 
T = \begin{bmatrix}
	W_1 & C \\
	0 & W_2
\end{bmatrix} = \begin{bmatrix}
\id & -X \\
0 & \id
\end{bmatrix} \begin{bmatrix}
W_1 & 0 \\
0 & W_2
\end{bmatrix} \begin{bmatrix}
\id & X \\
0 & \id
\end{bmatrix}.
\] 
 Notice that $\sigma(T) \subset \sigma (W_1) \cup \sigma(W_2)$.  Indeed, for $\lambda \in \mathbb{C}$,  we have \[T - \lambda \id = \begin{bmatrix}\id & 0 \\ 0 & W_2- \lambda \id \end{bmatrix} \begin{bmatrix}
		\id & D_{W_1^*}VD_{W_2} \\ 0 & \id
	\end{bmatrix} \begin{bmatrix}
		W_1- \lambda \id & 0 \\ 0 & \id
	\end{bmatrix}.\]
	Therefore,  if $\lambda \not \in \sigma(W_1) \cup \sigma (W_2)$, then all factors in the previous decomposition are invertible and thus $\lambda \not \in \sigma (T)$. So it makes sense to speak about $f(T)$ and to write 
	\[
	f(T) = \begin{bmatrix}
		\id & -X \\
		0 & \id
	\end{bmatrix} \begin{bmatrix}
		f(W_1) & 0 \\
		0 & f(W_2)
	\end{bmatrix} \begin{bmatrix}
		\id & X \\
		0 & \id
	\end{bmatrix} = \begin{bmatrix}
	f(W_1) & f(W_1)X-Xf(W_2) \\
	0 & f(W_2)
	\end{bmatrix}.
	\]
	
	As $\|f\|_{\infty} \le 1$, we have $\|f(T)\| \leq 1$ by von Neumann's inequality. Thus,  by Parrott's theorem, there exists a contraction $Y\in \mathcal{B}(H_2,H_1)$ such that $f(W_1)X-Xf(W_2)=D_{f(W_1)^*}YD_{f(W_2)}$.
\end{proof}

\subsection{An operator version of the Beardon-Minda inequality}
Utilizing the analogue proof framework as employed in Theorem~\ref{schwarz-pick-op-sylvester}, we can deduce the following outcome for $3\times 3$ operator matrices.

\begin{theo}
		Let $H_1, H_2, H_3$ be three Hilbert spaces. Consider three contractions $W_1 \in \mathcal{B}(H_1)$, $W_2 \in \mathcal{B}(H_2)$ and $W_3 \in \mathcal{B}(H_3)$. Let $V_1 \in \mathcal{B}(H_2,H_1)$, $V_2 \in \mathcal{B}(H_3,H_2)$, and $V_3 \in \mathcal{B}(H_3,H_1)$ be contractions.
		 Assume that $\|W_2\| <1$ and that $\sigma(W_i) \cap \sigma(W_j) = \emptyset$, for all $1 \leq i < j \leq 3$. Suppose that $f \in \mathcal{H}(\oud,\oud)$ is holomorphic on an open neighborhood of $\sigma(W_1) \cup \sigma(W_2) \cup \sigma(W_3)$. 
		 Let $X_1, X_2, X_3$ be respectively the unique solution of Sylvester's equations \begin{align}
		     & W_1X_1-X_1W_2=D_{W_1^*}V_1D_{W_2}, \\
                & W_2X_2-X_2W_3=D_{W_2^*}V_2D_{W_3} \text{ and } \\
                & W_1X_3-X_3W_3=B-W_3X_1X_2+X_1W_2X_2,
		 \end{align} where 
   $$B=\left[D_{W_1^*}(\id-V_1V_1^*)D_{W_1^*} \right]^{1/2} V_3 \left[ D_{W_3}(\id - V_2^*V_2)D_{W_3} \right]^{1/2} - D_{W_1^*}V_1W_2^*V_2D_{W_3}.$$
		 Then, there exist three contractions $Y_1 \in \mathcal{B}(H_2,H_1), Y_2 \in \mathcal{B}(H_3,H_2), Y_3 \in \mathcal{B}(H_3,H_1)$ such that :

   \begin{numcases}{}
		 	f(W_1)X_1-X_1f(W_2)=D_{f(W_1)^*}Y_1D_{f(W_2)}, \\
		 	f(W_2)X_2-X_2f(W_3)=D_{f(W_2)^*}Y_2D_{f(W_3)}, \\
		 	f(W_1)X_3-X_3f(W_3)= 
		 		X_1f(W_2)X_2-X_1X_2f(W_3) \notag\\ + \left[D_{f(W_1)^*}(\id-Y_1Y_1^*)D_{f(W_1)^*} \right]^{1/2} Y_3 \left[ D_{f(W_3)}(\id - Y_2^*Y_2)D_{f(W_3)} \right]^{1/2} \notag\\ - D_{f(W_1)^*}Y_1f(W_2)^*Y_2D_{f(W_3)}.\label{eq:37}
		 \end{numcases}
\end{theo}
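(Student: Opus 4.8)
The plan is to replay the proof of Theorem~\ref{schwarz-pick-op-sylvester}, replacing the $2\times 2$ upper-triangular matrix by the $3\times 3$ operator matrix controlled by Theorem~\ref{theo:OperBM}. First I would set
$$
T=\begin{bmatrix} W_1 & A_1 & B \\ 0 & W_2 & A_2 \\ 0 & 0 & W_3 \end{bmatrix},\qquad A_1=D_{W_1^*}V_1D_{W_2},\quad A_2=D_{W_2^*}V_2D_{W_3},
$$
with $B$ as in the statement. Since $\|W_2\|<1$ and the entries $A_1$, $A_2$, $B$ are by construction of the form \eqref{parott-3-3-cond1}--\eqref{parott-3-3-cond3}, Theorem~\ref{theo:OperBM} guarantees that $T$ is a contraction.

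Next I would diagonalize $T$ by a unipotent similarity. As $\sigma(W_i)\cap\sigma(W_j)=\emptyset$ for $i\neq j$, Rosenblum's theorem (Theorem~\ref{rosenblum-sylvester}) produces the unique solutions $X_1,X_2,X_3$ of the three Sylvester equations in the hypothesis. Putting
$$
P=\begin{bmatrix} \id & X_1 & X_3 \\ 0 & \id & X_2 \\ 0 & 0 & \id \end{bmatrix},\qquad P^{-1}=\begin{bmatrix} \id & -X_1 & X_1X_2-X_3 \\ 0 & \id & -X_2 \\ 0 & 0 & \id \end{bmatrix},
$$
a direct multiplication shows $T=P^{-1}DP$ with $D=\operatorname{diag}(W_1,W_2,W_3)$: the $(1,2)$ and $(2,3)$ entries reproduce the first two Sylvester equations, while the $(1,3)$ entry equals $W_1X_3-X_3W_3-X_1W_2X_2+X_1X_2W_3$, which the third Sylvester equation sets equal to $B$ (here the noncommutative products must be kept in the prescribed order). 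In particular $T$ is similar to $D$, so $\sigma(T)=\bigcup_i\sigma(W_i)$ and, by similarity invariance of the holomorphic functional calculus, $f(T)=P^{-1}\operatorname{diag}(f(W_1),f(W_2),f(W_3))P$. Repeating the same product with each $W_i$ replaced by $f(W_i)$, the off-diagonal entries of $f(T)$ become $f(W_1)X_1-X_1f(W_2)$, $f(W_2)X_2-X_2f(W_3)$ and $f(W_1)X_3-X_3f(W_3)-X_1f(W_2)X_2+X_1X_2f(W_3)$.

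Then I would bound $f(T)$. Since $T$ is a contraction and $\|f\|_\infty\le 1$, von Neumann's inequality gives $\|f(T)\|\le 1$ (if $f\notin\mathcal{A}(\cud)$, apply it to $f_r(z)=f(rz)$ and let $r\to1^-$, as at the end of the proof of Theorem~\ref{beardon-minda}). To extract the contractions $Y_1,Y_2,Y_3$ I would apply Theorem~\ref{theo:OperBM} to $f(T)$, whose central diagonal entry is $f(W_2)$; this needs $\|f(W_2)\|<1$. I would obtain this by a scaling argument: writing $s=\|W_2\|<1$ (the case $s=0$ being trivial), the function $g(z)=f(sz)$ lies in $\mathcal{A}(\cud)$ with $\|g\|_\infty=\max_{|w|\le s}|f(w)|<1$ because $f$ maps the compact set $\{|w|\le s\}\subset\oud$ into a compact subset of $\oud$; since $\|W_2/s\|=1$, von Neumann yields $\|f(W_2)\|=\|g(W_2/s)\|<1$. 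Theorem~\ref{theo:OperBM} then provides contractions $Y_1\in\mathcal{B}(H_2,H_1)$, $Y_2\in\mathcal{B}(H_3,H_2)$, $Y_3\in\mathcal{B}(H_3,H_1)$ for which the three off-diagonal entries of $f(T)$ equal the corresponding Parrott expressions \eqref{parott-3-3-cond1}--\eqref{parott-3-3-cond3} with $W_i,V_i$ replaced by $f(W_i),Y_i$. Reading off the $(1,2)$ and $(2,3)$ entries gives the first two conclusions, and solving the $(1,3)$ identity for $f(W_1)X_3-X_3f(W_3)$ gives exactly \eqref{eq:37}.

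I expect two points to require the most care. The delicate bookkeeping is the noncommutative ordering in the $(1,3)$ entry: since $X_1X_2W_3\neq W_3X_1X_2$ in general, one must track the order of the products both when checking $T=P^{-1}DP$ against $B$ and when matching the resulting identity with \eqref{eq:37}. The genuinely new technical step, with no analogue in the $2\times 2$ Theorem~\ref{schwarz-pick-op-sylvester}, is transferring the strict inequality $\|W_2\|<1$ to $\|f(W_2)\|<1$ so that Theorem~\ref{theo:OperBM} applies to $f(T)$; the scaling argument above is what makes this work, crucially using that $\{|w|\le\|W_2\|\}$ is a compact subset of $\oud$.
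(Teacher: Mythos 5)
Your proposal follows exactly the paper's proof: form $T$, show it is a contraction via Theorem~\ref{theo:OperBM}, conjugate it to $\operatorname{diag}(W_1,W_2,W_3)$ by the unipotent similarity built from the Sylvester solutions $X_1,X_2,X_3$, deduce $\|f(T)\|\le 1$ from von Neumann's inequality, and apply Theorem~\ref{theo:OperBM} once more to $f(T)$ to extract $Y_1,Y_2,Y_3$. You even supply two details the paper leaves implicit: the scaling argument giving $\|f(W_2)\|<1$ (without which Theorem~\ref{theo:OperBM} cannot be invoked for $f(T)$, whose central entry is $f(W_2)$), and the observation that the third Sylvester equation must be read with the product $X_1X_2W_3$ rather than $W_3X_1X_2$, which is what the similarity computation actually produces (and is the only well-defined ordering when $H_1\neq H_3$).
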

\begin{proof}
    Denote $A_1 = D_{W_1^*}V_1D_{W_2}$ and $A_2 = D_{W_2^*}V_2D_{W_3}$. Then, according to Theorem~\ref{theo:OperBM}, the operator
    \[
	T=\begin{bmatrix} W_1 & A_1 & B \\
		0 & W_2 & A_2 \\
		0 & 0 & W_3 \end{bmatrix}
  \]
  is a contraction. Notice also that  
    $X_1 \in \mathcal{B}(H_2,H_1)$, $X_2 \in \mathcal{B}(H_3,H_2)$ and $X_3 \in \mathcal{B}(H_3,H_1)$ are respectively the unique solutions of Sylvester's equations $W_1X_1-X_1W_2=A_1$, $W_2X_2-X_2W_3=A_2$ and $W_1X_3-X_3W_3=B-W_3X_1X_2+X_1W_2X_2$. 

    In analogy with some computations in the Heisenberg group, we can write
    \[
	T = \begin{bmatrix} W_1 & A_1 & B \\
		0 & W_2 & A_2 \\
		0 & 0 & W_3 \end{bmatrix} = \begin{bmatrix} \id & -X_1 & X_1X_2-X_3 \\
		0 & \id & -X_2 \\
		0 & 0 & \id \end{bmatrix} \begin{bmatrix}
		W_1 & 0 & 0 \\
		0 & W_2 & 0 \\
		0 & 0 & W_3
		\end{bmatrix} \begin{bmatrix} \id  & X_1 & X_3 \\
		0 & \id & X_2 \\
		0 & 0& \id\end{bmatrix}.
	\]
 This diagonalization allows one to write the $3\times 3$ operator matrix of $f(T)$, which is a contraction by von Neumann's inequality:
   \[ f(T) =
 \begin{bmatrix} \id & -X_1 & X_1X_2-X_3 \\
		0 & \id & -X_2 \\
		0 & 0 & \id \end{bmatrix} \begin{bmatrix}
		f(W_1) & 0 & 0 \\
		0 & f(W_2) & 0 \\
		0 & 0 & f(W_3)
		\end{bmatrix} \begin{bmatrix} \id  & X_1 & X_3 \\
		0 & \id & X_2 \\
		0 & 0& \id\end{bmatrix}.  
 \]
Thus the matrix of $f(T)$ is given by
\[
	\begin{bmatrix} f(W_1) & f(W_1)X_1-X_1f(W_2) & f(W_1)X_3-X_1f(W_2)X_2+(X_1X_2-X_3)f(W_3) \\
		0 & f(W_2) & f(W_2)X_2-X_2f(W_3) \\
		0 & 0 & f(W_3) \end{bmatrix}.
  \]
 We apply again Theorem~\ref{theo:OperBM}. 
\end{proof}

In the scalar case, the condition \eqref{eq:37} is equivalent to the Beardon-Minda inequality.

\section{Schwarz-Pick inequalities for the polydisk}

Let $n \in \mathbb{N}^*$. For $\mpt{\omega}=(\omega_1, \cdots, \omega_n) \in \mathbb{C}^n$, we denote $\|\mpt{\omega}\| = \sup_{1 \leq i \leq n} |\omega_i|$ the sup norm.

\subsection{Using von Neumann inequality for tuples of two by two matrices}
It is a fascinating observation in operator theory that an analogue of the von Neumann inequality holds for the bidisk (Ando's theorem), but does not extend to the polydisk $\oud^n$ for $n\ge 3$. However, as proved by Drury \cite{druryRemarksNeumannInequality1983} and Knese \cite{kneseNeumannInequalityTimes2016}, there is an analogue for tuples of $2\times 2$ and $3\times 3$ commuting matrices.

\begin{lem}[Drury and Knese; see \autocite{druryRemarksNeumannInequality1983}, \autocite{kneseNeumannInequalityTimes2016}]\label{fu-russo}
	Let $T_1,...,T_n$ be mutually commuting $2\times 2$ or $3\times 3$ contractions, and let $p \in \mathbb{C}[X_1, \cdots, X_n]$. Then, we have 
	\[
	\|p(T_1,...,T_n)\| \leq \|p\|_{\infty}:= \sup\{|p(z_1,\cdots z_n)| : \mpt{z} \in \oud^n\}.
	\]
\end{lem}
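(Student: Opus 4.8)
The plan is to reduce the lemma, through two standard normalizations, to a short list of scalar Schwarz--Pick--type inequalities on the polydisk. First I would pass to \emph{strict} contractions: replacing each $T_j$ by $rT_j$ with $r\in(0,1)$ keeps the tuple commuting and contractive, pushes every eigenvalue into $\oud$, and gives $p(rT_1,\dots,rT_n)\to p(T_1,\dots,T_n)$ in norm as $r\to 1^-$, so it suffices to prove the bound for strict contractions and then let $r\to 1^-$. Next, since the $T_j$ commute they admit a simultaneous Schur triangularization: there is one orthonormal basis in which every $T_j$ is upper triangular. A unitary change of basis preserves commutation, contractivity and operator norms, so I may assume each $T_j$ --- and hence each $p(T_1,\dots,T_n)$ --- is upper triangular, with diagonal eigenvalues in $\oud$.

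Normalizing $\|p\|_\infty=1$, the goal is that the upper-triangular matrix $p(T_1,\dots,T_n)$ be a contraction, which is exactly what the Parrott-based criteria decide: the $2\times 2$ case says $\begin{psmallmatrix}\mu_1 & a\\ 0 & \mu_2\end{psmallmatrix}$ is contractive iff $|\mu_i|\le 1$ and $|a|\le\sqrt{1-|\mu_1|^2}\sqrt{1-|\mu_2|^2}$, and the $3\times 3$ case is \Cref{theo:gupta}. The diagonal entries of $p(T_1,\dots,T_n)$ are the values of $p$ at the eigenvalue-tuples, which lie in $\oud^n$, while the off-diagonal entries are first- and second-order divided-difference combinations of those values. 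Contractivity therefore becomes finitely many scalar inequalities between values of the holomorphic map $p\colon\oud^n\to\cud$ at a few points --- that is, multi-point Schwarz--Pick estimates on $\oud^n$.

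I would settle the $2\times 2$ case in full, since it already displays the whole mechanism. With $T_j=\begin{psmallmatrix}a_j & b_j\\ 0 & c_j\end{psmallmatrix}$, commutativity forces $b_k(a_j-c_j)=b_j(a_k-c_k)$, so away from the degenerate case $\mpt{a}=\mpt{c}$ there is one scalar $s$ with $b_j=s(a_j-c_j)$, whence $p(T_1,\dots,T_n)=\begin{psmallmatrix}p(\mpt{a}) & s(p(\mpt{a})-p(\mpt{c}))\\ 0 & p(\mpt{c})\end{psmallmatrix}$ with $\mpt{a},\mpt{c}\in\oud^n$. Feeding this and the contractivity of each $T_j$ into the $2\times 2$ criterion and clearing denominators with the identity \eqref{eq:important}, the target inequality is $|s|^2\le(1-\rho(p(\mpt{a}),p(\mpt{c}))^2)/\rho(p(\mpt{a}),p(\mpt{c}))^2$ whereas the hypotheses give $|s|^2\le(1-\rho(a_j,c_j)^2)/\rho(a_j,c_j)^2$ for every $j$. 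As $x\mapsto(1-x^2)/x^2$ is decreasing, it suffices to prove $\rho(p(\mpt{a}),p(\mpt{c}))\le\max_j\rho(a_j,c_j)$ --- precisely the classical fact that a holomorphic map $\oud^n\to\cud$ contracts the Kobayashi distance of the polydisk, which is $\max_j$ of the one-variable hyperbolic distances (see \cite{kneseSchwarzLemmaPolydisk2007a}); the degenerate case $\mpt{a}=\mpt{c}$ reduces to the corresponding derivative Schwarz--Pick estimate. This recovers Drury's $2\times 2$ theorem \cite{druryRemarksNeumannInequality1983}, and conceptually it is the compression to $\C^2$ of a simultaneous commuting \emph{unitary} dilation of $(T_1,\dots,T_n)$, whose joint spectrum the spectral theorem places on $\T^n\subset\cud^{\,n}$.

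The $3\times 3$ case runs along the same lines but is the genuine obstacle. Now $p(T_1,\dots,T_n)$ carries three diagonal values of $p$ at three points of $\oud^n$, with superdiagonal and corner entries built from first- and second-order divided differences; the criterion \Cref{theo:gupta}, and especially the corner condition \eqref{cond-3}, then demands a full three-point, second-order Schwarz--Pick inequality on the polydisk, together with a careful analysis of every degenerate configuration (coincidences among the three eigenvalue-tuples and Jordan-block degeneracies). This is exactly the bookkeeping carried out by Knese \cite{kneseNeumannInequalityTimes2016}, and it is where all the difficulty lies. I expect this to be the hard part, and I would emphasize that a restriction on the matrix size is unavoidable: by Varopoulos's example the von Neumann inequality fails for general commuting contraction tuples once $n\ge 3$, so no soft, size-free dilation argument can handle the $3\times 3$ statement for free.
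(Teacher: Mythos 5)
First, a structural point: the paper does not prove this lemma at all --- it is imported as a known result, with the $2\times 2$ case credited to Drury and the $3\times 3$ case to Knese. So there is no internal proof to compare against, and your attempt has to stand on its own. Your $2\times 2$ half does stand. The reduction to strict contractions, the simultaneous unitary triangularization of a commuting family, the extraction of a common slope $s$ from the commutation relations $b_k(a_j-c_j)=b_j(a_k-c_k)$, the resulting formula for $p(T_1,\dots,T_n)$, and the monotonicity of $x\mapsto(1-x^2)/x^2$ reducing everything to $\rho\bigl(p(\mpt{a}),p(\mpt{c})\bigr)\le\max_j\rho(a_j,c_j)$ (with the Jordan-type case $\mpt{a}=\mpt{c}$ handled by the derivative estimate) is a complete and correct argument, essentially Drury's own. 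One caveat must be made explicit: the two scalar inequalities you invoke at the end, \eqref{eq:SP-pol} and \eqref{eq:SP-pol-der}, are precisely what this paper later \emph{derives from} the present lemma in \Cref{schwarz-pick-polydisk}. Your argument is non-circular only because these inequalities have classical function-theoretic proofs independent of any matrix theory (Rudin's Lemma 7.5.6 in \cite{rudinFunctionTheoryPolydiscs1969}: restrict to an analytic disk through the two points and apply the one-variable Schwarz lemma); you should say that this is the version you are using, rather than cite them as ambient facts.

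The genuine gap is the $3\times 3$ case, which you do not prove but only describe and then defer to \cite{kneseNeumannInequalityTimes2016}. This is not a small omission, for two reasons. First, the input your scheme requires --- a three-point, second-order Schwarz--Pick inequality on the polydisk strong enough to verify condition \eqref{cond-3} of \Cref{theo:gupta}, in all degenerate configurations --- is not a classical fact that can be cited the way Rudin's lemma can: no such ``polydisk Beardon--Minda inequality'' predates Knese's theorem, and, as the paper's own Section 2 computation shows in one variable, an inequality of this kind is essentially \emph{equivalent} to the $3\times 3$ von Neumann inequality. So your reduction trades the statement for one of the same strength, rather than for something known. Second, calling this ``exactly the bookkeeping carried out by Knese'' misrepresents his proof, which does not triangularize and verify scalar inequalities; it combines the approximate simultaneous diagonalizability of commuting tuples of $3\times 3$ matrices with the theorem of Lotto and Steger that commuting \emph{diagonalizable} contractions satisfy the von Neumann inequality. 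Since the paper needs the $3\times 3$ case (it is what \Cref{ineq-peschl-mult-mult-varia} and \Cref{ineq-peschl-disting} rest on), your proposal proves only the half of the lemma that is not the main burden, and leaves the hard half exactly where the paper leaves it: as a citation.
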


This leads to operator theoretical proofs of the following known (\autocite[Lemma 7.5.6]{rudinFunctionTheoryPolydiscs1969}) Schwarz-Pick inequalities for the polydisk. 

\begin{theo}\label{schwarz-pick-polydisk} (a) Let $f \in \mathcal{H}(\oud^n, \oud)$ and let $\mpt{a}=(a_1, \dots , a_n)$, $ \mpt{b}=(b_1, \dots, b_n) \in \oud^n$. Then
\begin{equation} \label{eq:SP-pol}
    \left| \frac{f(a_1, \cdots, a_n) - f(b_1, \cdots, b_n)}{1 - \overline{f(a_1, \cdots, a_n)} f(b_1, \cdots , b_n)} \right| \leq \max_{1 \leq i \leq n} \left| \frac{a_i - b_i}{1 - \overline{a_i}b_i} \right|.
\end{equation}

 (b) Let $f \in \mathcal{H}(\oud^n,\oud)$ and let $\mpt{a}=(a_2, \dots, a_n) \in \oud^n$.
	Then,
 \begin{equation} \label{eq:SP-pol-der}
	\sum_{i=1}^{n}(1-|a_i|^2)\left|\frac{\partial f(\mpt{a})}{\partial z_i}\right| \leq 1- |f(\mpt{a})|^2.
	\end{equation}
\end{theo}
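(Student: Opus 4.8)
The plan is to mimic exactly the matrix-completion strategy used to derive the classical Schwarz–Pick inequality, but now feeding commuting tuples of $2\times 2$ matrices into the polydisk von Neumann inequality of Lemma~\ref{fu-russo}.

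For part (a), I would introduce for each coordinate $i$ the single $2\times 2$ contraction
\[
T_i=\begin{pmatrix} a_i & \sqrt{1-|a_i|^2}\,\sqrt{1-|b_i|^2} \\ 0 & b_i\end{pmatrix},
\]
which is a contraction by the $2\times 2$ criterion recalled after Theorem~\ref{parrott}. These matrices pairwise commute (each is a polynomial combination acting on the same $2$-dimensional space with the same off-diagonal structure; this commutativity is the point that must be checked, and is the step I expect to be the main obstacle — one wants $T_iT_j=T_jT_i$ for all $i,j$, which forces a consistent choice of the shared eigenbasis). The clean way to guarantee commutativity is to diagonalize all the $T_i$ simultaneously: each $T_i$ has eigenvalues $a_i,b_i$ with a \emph{common} pair of eigenvectors independent of $i$, so after a single change of basis every $T_i$ becomes $\operatorname{diag}(a_i,b_i)$ conjugated by the \emph{same} invertible matrix. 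Concretely, writing $T_i=P\operatorname{diag}(a_i,b_i)P^{-1}$ with $P$ not depending on $i$, the tuple $(T_1,\dots,T_n)$ is a commuting family of contractions. Then for $f\in\mathcal{H}(\oud^n,\oud)$ (first a polynomial, then a general $f$ by the usual dilation $f_r(\mpt z)=f(r\mpt z)$ and $r\to 1^-$ argument), Lemma~\ref{fu-russo} gives $\|f(T_1,\dots,T_n)\|\le 1$. Computing $f(T_1,\dots,T_n)=P\operatorname{diag}\bigl(f(\mpt a),f(\mpt b)\bigr)P^{-1}$ yields an upper-triangular $2\times 2$ matrix whose diagonal is $f(\mpt a),f(\mpt b)$ and whose corner entry involves the divided difference of $f$; applying the $2\times 2$ contractivity criterion to this matrix produces exactly inequality \eqref{eq:SP-pol}, after dividing by $\max_i\rho(a_i,b_i)$ in the same way the scalar proof divides by $\rho(a,b)$.

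For part (b) I would take the limiting case $\mpt b\to\mpt a$ along each coordinate, replacing the divided differences by first-order partial derivatives, exactly as the passage from \eqref{eq:Schw-Pick} to \eqref{eq:Schw-Pick-der} replaces the divided difference by $f'$. Alternatively, and more directly, I would set up $2\times 2$ Jordan-type matrices $T_i=\begin{pmatrix} a_i & 1-|a_i|^2 \\ 0 & a_i\end{pmatrix}$, which are again commuting contractions (they share the nilpotent off-diagonal structure), and evaluate $f(T_1,\dots,T_n)$: for a multivariable $f$ the $(1,2)$-entry of $f(T_1,\dots,T_n)$ collects the directional derivative $\sum_i (1-|a_i|^2)\,\partial f(\mpt a)/\partial z_i$ by the chain rule applied entrywise. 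Imposing $\|f(T_1,\dots,T_n)\|\le 1$ via the $2\times 2$ criterion and taking the sharpest phase choice (absorbing the moduli of each partial into the sum, which is legitimate since one may rotate each coordinate of the matrices independently by $T_i\mapsto e^{i\theta_i}T_i$-type adjustments) yields \eqref{eq:SP-pol-der}. The main subtlety here is bookkeeping: one must verify that the $(1,2)$-entry of $f$ of the tuple is genuinely $\sum_i(1-|a_i|^2)\partial_i f(\mpt a)$ and not merely a single directional derivative, which follows by expanding $f$ as a power series and checking the claim on monomials $z_1^{k_1}\cdots z_n^{k_n}$, where only the linear-in-each-nilpotent terms survive.

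The approximation step (reducing from $\mathcal H(\oud^n,\oud)$ to the polynomial case) is routine and identical to the one in the proof of Theorem~\ref{beardon-minda}: one applies the inequality to $f_r(\mpt z)=f(r\mpt z)$, which extends holomorphically to a neighborhood of $\cud^n$, and lets $r\to 1^-$. The genuinely delicate point throughout is ensuring simultaneous commutativity of the matrix tuple so that Lemma~\ref{fu-russo} applies; once commutativity is secured, the remainder is the same $2\times 2$ contractivity computation that underlies the classical Schwarz–Pick lemma.
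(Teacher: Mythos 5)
Your overall strategy (commuting tuples of $2\times 2$ contractions fed into Lemma~\ref{fu-russo}, then the $2\times 2$ contractivity criterion, then the $f_r$ approximation) is the same as the paper's, and your second approach to part (b) is essentially the paper's own proof: it uses the tuple $T_i = a_i\id + \gamma_i N$, with $N$ the elementary nilpotent, which commutes for \emph{any} choice of the $\gamma_i$. Your only slip there is the mechanism for the phases: they must be inserted into the nilpotent part only, $\gamma_i = e^{i\theta_i}(1-|a_i|^2)$, as the paper does; the adjustment $T_i \mapsto e^{i\theta_i}T_i$ you describe would also rotate the diagonal entries and hence change the point at which $f$ is evaluated. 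That is easily repaired.

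Part (a), however, has a genuine gap. The matrices you write down, with corner entries $x_i = \sqrt{1-|a_i|^2}\sqrt{1-|b_i|^2}$, do not commute in general: for upper-triangular matrices with diagonal $(a_i,b_i)$ and corner $x_i$, the relation $T_iT_j=T_jT_i$ is equivalent to $x_j(a_i-b_i)=x_i(a_j-b_j)$, which already fails for $n=2$, $a_1=0$, $b_1=\tfrac12$, $a_2=b_2=0$ (there $T_1T_2=0\neq T_2T_1$). Your proposed remedy --- pass to a common eigenbasis --- cannot rescue these matrices, because commutativity is invariant under simultaneous conjugation: no change of basis makes a non-commuting family commute. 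What a common eigenbasis genuinely forces is a \emph{different} family: the corners must be $x_i = c\,(a_i-b_i)$ with a single constant $c$ independent of $i$, so that the eigenvector $(x_i,\, b_i-a_i)$ associated with $b_i$ is the same for all $i$. Each such $T_i$ is then a contraction iff $|c|\le \sqrt{(1-|a_i|^2)(1-|b_i|^2)}/|a_i-b_i|$ for every $i$, the corner of $f(T_1,\dots,T_n)$ becomes $c\,(f(\mpt{a})-f(\mpt{b}))$, and the $2\times 2$ criterion together with the identity $(1-|u|^2)(1-|v|^2)=|1-\overline{u}v|^2-|u-v|^2$ yields only $\rho(f(\mpt{a}),f(\mpt{b}))^2 \le 1/(1+|c|^2)$. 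This equals the desired bound $\max_i \rho(a_i,b_i)^2$ precisely when $|c|$ is taken to be its largest admissible value $d:=\min_i \sqrt{(1-|a_i|^2)(1-|b_i|^2)}/|a_i-b_i|$ (indices with $a_i=b_i$ excluded from the minimum); note that a careless choice such as $c=0$ gives diagonal $T_i$ and no information at all. This extremal constant $d$ --- that is, the corner entries $d(a_i-b_i)$ --- is exactly the paper's construction, and it is the quantitative idea missing from your write-up: without it, part (a) does not close.
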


\begin{proof}
	(a) \quad We first observe that the result is obvious whenever $\mpt{a} = \mpt{b}$ or $f(\mpt{a}) = f(\mpt{b})$. Therefore, in the following, we assume $\mpt{a} \neq \mpt{b}$ and $f(\mpt{a}) \neq f(\mpt{b})$.
	
	For $1 \leq i \leq n$, let $$T_i = \begin{pmatrix}
		a_i & d(a_i-b_i)\\ 
		0 & b_i \end{pmatrix},$$ 
  with 
  $$d = \min_{1 \leq i \leq n} \sqrt{\frac{(1-|a_i|^{2})(1-|b_i|^2)}{|a_i-b_i|^2}}.$$ 
Here, whenever $a_i = b_i$, we make the convention that $\sqrt{\frac{(1-|a_i|^{2})(1-|b_i|^2)}{|a_i-b_i|^2}} = + \infty$. As we assume that $\mpt{a}\neq \mpt{b}$, this cannot happen for all the indices $i$.

It can be easily verified that the matrices $T_i$ are mutually commuting and that $\|T_i\| \le 1$. By induction it can be shown that for all $i \in \llbracket 1, n \rrbracket$, for all $k_i \in \mathbb{N}$, 	
$$T_i^{k_i}=\begin{pmatrix}
		a_i^{k_i} & d \left(a_i^{k_i}-b_i^{k_i} \right) \\ 
		0 & b_i^{k_i}
	\end{pmatrix}.$$ 
Let $p \in \mathbb{C}[X_1, \dots, X_n]$ be a polynomial with $\|p\|_{\infty} < 1$. We have 
$$p(T_1, \cdots, T_n) = \begin{pmatrix}
		p(\mpt{a}) & d(p(\mpt{a}) - p(\mpt{b})) \\ 
		0 & p(\mpt{b})
	\end{pmatrix}.$$ 
Drury's result implies that $\|p(T_1, \dots, T_n)\| \leq 1$. As in the one variable case, a computation gives the Schwarz-Pick inequality \eqref{eq:SP-pol} for $p$. By an approximation argument, \eqref{eq:SP-pol} holds also for functions in the polydisk algebra. 
Now, if $f \in \mathcal{H}(\oud^n,\oud)$, consider the family of functions $(f_r)_{0 < r < 1}$ defined by $f_r(z_1, \cdots, z_n) = f(rz_1, \dots, rz_n)$. For all $r \in \, ]0, 1[$, $f_r$ is in the polydisk algebra and, thus, $f_r$ satisfies \eqref{eq:SP-pol}. Then, let $r \to 1^-$ to conclude the proof.

(b) \quad The proof follows the same method as that of Theorem~\ref{schwarz-pick-polydisk}. Let $\mpt{a}=(a_2, \dots, a_n) \in \oud^n$ and let $p \in \mathbb{C}[X_1, \dots, X_n]$ be a polynomial with $\|p\|_{\infty} < 1$. For $1 \leq k \leq n$, let $T_k = \begin{pmatrix}
		a_k & \gamma_k \\ 
		0 & a_k \end{pmatrix}$, where $\gamma_k = e^{i \theta_k} (1 - |a_k|^2)$, for some $\theta_k \in [0, 2 \pi[$ to be chosen later on. For all $k \in \llbracket 1 , n \rrbracket$, $\|T_k\| \leq 1$, and, for all $k,l \in \llbracket 1 , n \rrbracket$, $T_kT_l=T_lT_k$.
	We have
	\[
	p(T_1, \cdots, T_n) = \begin{pmatrix}
		p(\mpt{a}) & \sum_{k=1}^{n} \gamma_k \frac{\partial p(\mpt{a})}{\partial z_k}  \\ 
		0 & p(\mpt{a})
	\end{pmatrix}.
	\]
	Again, by Lemma~\ref{fu-russo}, we get $\|p(T_1, \dots, T_n)\| \leq 1$. Therefore
	\begin{equation}\label{schwarz-pick-polydisk-limite-etape-1}
		\left|\sum_{k=1}^{n} \gamma_k \frac{\partial p(\mpt{a})}{\partial z_k} \right| \leq 1 - |p(\mpt{a})|^2.
	\end{equation}
	Now, let $t_k =\frac{\partial p(a_1, a_2)}{\partial z_k}$, $0 \leq k \leq n$. We write $t_k = |t_k|e^{i \text{Arg}(t_k)}$ and we set  $\theta_k = -\text{Arg}(t_k)$. \\
	With this choice we obtain $\gamma_k t_k = (1 - |a_k|^2) \, |t_k|$. Replacing in \eqref{schwarz-pick-polydisk-limite-etape-1} we get
	\[
	\sum_{i=1}^{n}(1-|a_i|^2)\left|\frac{\partial p(\mpt{a})}{\partial z_i}\right| \leq 1- |p(\mpt{a})|^2 .
	\]
	We conclude by using an approximation argument.
\end{proof}

\begin{rema}
    The study of the case of equality in the Schwarz-Pick inequalities for the polydisk is an interesting problem. Knese~\cite{kneseSchwarzLemmaPolydisk2007a} studied the equality case in \eqref{eq:SP-pol-der} using operator-theoretical methods (transfer functions) and described which functions play the role of automorphisms of the disk in this context–they turn out to be rational inner functions in the Schur-Agler class of the polydisk with an \emph{added} symmetry constraint. 
\end{rema}

\subsection{Peschl's invariant derivatives in several variables} \label{section5}

The inequalities from Section \ref{s:beardon-minda-limit-case} can be extended to analytic functions of several variables.

Let $n \in \mathbb{N}^*$, let $f \in \mathcal{H}(\oud^n,\oud)$, and fix a vector $\mpt{\omega}=(\omega_1, \cdots, \omega_n)$ in $\oud^n$.
Similarly as in the one variable case, we define
\[
g : \mpt{z}=(z_1, \cdots , z_n) \in \oud^n \mapsto \frac{f\left(\frac{z_1+\omega_1}{1+\cj{\omega_1}z_1}, \cdots , \frac{z_n+\omega_n}{1+\cj{\omega_n}z_n }\right)-f(\omega_1, \cdots, \omega_n)}{1-\cj{f(\omega_1, \cdots, \omega_n)}f\left(\frac{z_1+\omega_1}{1+\cj{\omega_1}z_1}, \cdots , \frac{z_n+\omega_n}{1+\cj{\omega_n}z_n }\right)} \in \C
\]
and then write 
\[
g(z_1, \cdots, z_n) = \sum_{j_1, \cdots , j_n=0}^{\infty} \frac{\partial^{j_1+\cdots+j_n}g(0, \cdots, 0)}{\partial^{j_1} z_1 \cdots \partial^{j_n}z_n}z_1^{j_1}\cdots z_n^{j_n} = \sum_{j_1, \cdots , j_n=0}^{\infty} a_{j_1, \cdots, j_n}z_1^{j_1}\cdots z_n^{j_n}.
\] 

For $k \in \llbracket 1, n \rrbracket$, let $D_kf(\mpt{w})= \partial^kg(0, \dots, 0) = \sum_{j_1+\dots+j_n=k}a_{j_1, \cdots, j_n}$.
A straightforward computation gives :
\begin{align*}
	 D_1f(\mpt{\omega}) & = \sum_{j=1}^{n} \frac{1-|\omega_j|^2}{1-|f(\mpt{\omega})|^2} \cdot \frac{\partial f}{\partial z_j}(\mpt{\omega}), \\
	D_2f(\mpt{\omega}) & = \sum_{j=1}^{n} \frac{\partial^2g(0, \cdots, 0)}{\partial^2 z_j} + 2 \sum_{1 \leq j < k \leq n} \frac{\partial^2 f(0, \cdots, 0)}{\partial z_j \partial z_k} \\
					  & =  \begin{multlined}[t]
					  	 \sum_{j=1}^{n} \frac{(1-|\omega_j|^2)^2}{1-|f(\mpt{\omega})|^2}\left( \frac{\partial^2 f (\mpt{w})}{\partial^2 z_j}  + \frac{2 \cj{f(\mpt{w})}}{1-|f(\mpt{w})|^2} - \frac{2 \cj{\omega_j}}{1-|\omega_j|^2} \cdot \frac{\partial f(\mpt{\omega})}{\partial z_j} \right) \ \\
					  	+ 2 \sum_{1 \leq j < k \leq n} \frac{(1-|z_j|^2)(1-|z_k|^2)}{1-|f(\mpt{\omega})|^2} \left( \frac{\partial f(\mpt{\omega})}{\partial z_j \partial z_k} + \frac{2 \cj{f(\mpt{\omega})}}{1-|f(\mpt{\omega})|^2} \cdot \frac{\partial f (\mpt{\omega})}{\partial z_j} \cdot \frac{\partial f (\mpt{\omega})}{\partial z_k}\right).
					  	\end{multlined}
\end{align*}

With the same method of proof as before, we can arrive at the following result.
\begin{theo}\label{ineq-peschl-mult-mult-varia}
	For $n \in \mathbb{N}^*$ let $\mpt{w} = (\omega_1, \dots, \omega_n) \in \oud^n$ and consider $f \in \mathcal{H}(\oud^n, \oud)$. Then, we have:
	\begin{equation}\label{ineq-peschl-mult-mult-varia-eq}
	|D_2f(\mpt{\omega})| \leq 2 (1 - |D_1f(\mpt{\omega})|^2).
	\end{equation}
\end{theo}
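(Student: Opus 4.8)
The plan is to transport the single-variable argument behind Theorem~\ref{yamashita-theo} to the polydisk, replacing the classical von Neumann inequality by its analogue for commuting triples of $3\times 3$ matrices, Lemma~\ref{fu-russo}. Let $N=\left(\begin{smallmatrix} 0&1&0\\ 0&0&1\\ 0&0&0\end{smallmatrix}\right)$ be the nilpotent Jordan shift on $\C^3$, so that $N^3=0$, and for each $k\in\llbracket 1,n\rrbracket$ set
\[
T_k=\omega_k\,\id+(1-|\omega_k|^2)\,N-\cj{\omega_k}(1-|\omega_k|^2)\,N^2 .
\]
Each $T_k$ has exactly the shape of the contraction appearing in the proof of Theorem~\ref{yamashita-theo}, hence is a contraction by Theorem~\ref{theo:gupta}. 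The decisive point is that the $T_k$ are all polynomials in the single nilpotent $N$, so they commute pairwise automatically; thus $(T_1,\dots,T_n)$ is a commuting $n$-tuple of $3\times 3$ contractions and Lemma~\ref{fu-russo} is available.

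Next I would compute the functional calculus. Writing $M_k:=T_k-\omega_k\id$, one has $M_jM_k=(1-|\omega_j|^2)(1-|\omega_k|^2)\,N^2$ and any product of three of the $M_k$ vanishes, so for a polynomial $p\in\C[X_1,\dots,X_n]$ the (commuting) Taylor expansion of $p$ about $\mpt{\omega}$ collapses to
\[
p(T_1,\dots,T_n)=p(\mpt{\omega})\,\id+\mathcal A\,N+\mathcal B\,N^2 ,
\]
a matrix of the same Jordan type as in Theorem~\ref{yamashita-theo}, where
\[
\mathcal A=\sum_{k=1}^{n}(1-|\omega_k|^2)\frac{\partial p}{\partial z_k}(\mpt{\omega}),\qquad
\mathcal B=-\sum_{k=1}^{n}\cj{\omega_k}(1-|\omega_k|^2)\frac{\partial p}{\partial z_k}(\mpt{\omega})+\frac12\sum_{j,k=1}^{n}(1-|\omega_j|^2)(1-|\omega_k|^2)\frac{\partial^2 p}{\partial z_j\partial z_k}(\mpt{\omega}).
\]
Taking $\|p\|_\infty<1$ forces $|p(\mpt{\omega})|<1$, so Lemma~\ref{fu-russo} gives $\|p(T_1,\dots,T_n)\|\le\|p\|_\infty\le 1$, and the first-case criterion \eqref{cond-3} of Theorem~\ref{theo:gupta} (applied with all three diagonal entries equal to $p(\mpt{\omega})$ and $\alpha_1=\alpha_2=\mathcal A$, $\beta=\mathcal B$) yields
\[
\bigl|\mathcal B\,(1-|p(\mpt{\omega})|^2)+\mathcal A^2\,\cj{p(\mpt{\omega})}\bigr|\le (1-|p(\mpt{\omega})|^2)^2-|\mathcal A|^2,
\]
the right-hand side being nonnegative by \eqref{cond-1}.

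It then remains to identify this inequality with \eqref{ineq-peschl-mult-mult-varia-eq}. Dividing through by $(1-|p(\mpt{\omega})|^2)^2$ and comparing with the explicit formulas for $D_1p(\mpt{\omega})$ and $D_2p(\mpt{\omega})$ recorded before the statement, one checks the two identities $\mathcal A=(1-|p(\mpt{\omega})|^2)\,D_1p(\mpt{\omega})$ and
\[
\frac{\mathcal B}{1-|p(\mpt{\omega})|^2}+\cj{p(\mpt{\omega})}\,\bigl(D_1p(\mpt{\omega})\bigr)^2=\frac12\,D_2p(\mpt{\omega}),
\]
exactly as in the one-variable reduction used for Theorem~\ref{yamashita-theo}; the inequality then reads $|D_2p(\mpt{\omega})|\le 2\bigl(1-|D_1p(\mpt{\omega})|^2\bigr)$. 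Finally, I would pass from polynomials with $\|p\|_\infty<1$ to an arbitrary $f\in\mathcal H(\oud^n,\oud)$ through the dilations $f_r(\mpt{z})=f(rz_1,\dots,rz_n)$, $0<r<1$, exactly as in the proof of Theorem~\ref{schwarz-pick-polydisk}, and let $r\to 1^-$.

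The one genuinely delicate step is the second displayed identity: checking that $\mathcal B$, which braids together the diagonal second derivatives $\partial_k^2 p$, the cross terms $2\sum_{j<k}\partial_j\partial_k p$, and the automorphism corrections carried by $\cj{\omega_k}$, reassembles precisely into $\tfrac12 D_2p(\mpt{\omega})$ after adding the correction $\cj{p(\mpt{\omega})}(D_1p(\mpt{\omega}))^2$. This is bookkeeping parallel to the scalar case, but the mixed partials and the fact that $D_1,D_2$ are themselves defined through a composition with disk automorphisms in each variable make the algebra heavier. The cleanest way to tame it is to first reduce, using that $D_1f(\mpt{\omega})$ and $D_2f(\mpt{\omega})$ depend only on the associated normalized function $g$ (with $g(0)=0$) introduced above, to the case $\mpt{\omega}=0$ and $f(\mpt{\omega})=0$: there the $\cj{\omega_k}$ and $\cj{p(\mpt{\omega})}$ corrections vanish, $\mathcal A=D_1f(0)$, $\mathcal B=\tfrac12 D_2f(0)$, and the identity becomes transparent.
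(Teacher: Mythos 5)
Your proposal is correct and follows essentially the same route as the paper's own proof: the same matrices $T_k=\omega_k\id+(1-|\omega_k|^2)N-\cj{\omega_k}(1-|\omega_k|^2)N^2$, the same appeal to Lemma~\ref{fu-russo} and to the criterion \eqref{cond-3} of Theorem~\ref{theo:gupta}, and the same dilation/approximation step at the end. Your two additions---justifying commutativity by noting all $T_k$ are polynomials in the single nilpotent $N$, and reducing the final identification of $\mathcal A$, $\mathcal B$ with $D_1$, $D_2$ to the normalized case $\mpt{\omega}=0$, $f(\mpt{\omega})=0$---are sound refinements of details the paper leaves as ``easy to check.''
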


\begin{proof}
	For $1 \leq k \leq n$, let 
 $$T_k=\begin{pmatrix}
		\omega_k & \alpha_k & \beta_k \\
		0 & \omega_k & \alpha_k \\
		0 & 0 & \omega_k
	\end{pmatrix} \in \mathcal{M}_3(\mathbb{C}),$$
 with $\alpha_k = 1-|\omega_k|^2$ and $\beta_k=-\cj{\omega_k}(1-|\omega_k|^2)$. By Theorem~\ref{theo:gupta}, $T_k$ is a contraction, for all $k \in \llbracket 1 , n \rrbracket$. Moreover, for all $1 \leq k,j \leq n$, $T_jT_k=T_kT_j$. Therefore, by Knese's result, $p(T_1, \dots, T_n)$ is a contraction, for every $p \in \mathbb{C}[X_1, \dots, X_n]$ with $\|p\|_{\infty} < 1$. Moreover, it is easy to check that \[p(T_1, \dots, T_n) = \begin{pmatrix}
		p(\mpt{\omega}) & \gamma_1 & \gamma_2 \\
		0 & p(\mpt{\omega}) & \gamma_1 \\
		0 & 0 & p(\mpt{\omega})
	\end{pmatrix},\] with 
	\begin{align*}
	& \gamma_1 = \sum_{j=1}^{n} \alpha_j \frac{\partial p(\mpt{\omega})}{\partial z_j}, \\ 
 & \gamma_2 = \frac{1}{2} \sum_{j=1}^{n} \alpha_j^2 \frac{\partial^2 p(\mpt{w})}{\partial^2 z_j} + \sum_{1 \leq j < k \leq n} \alpha_j \alpha_k \frac{\partial^2 p(\mpt{w})}{\partial z_j \partial z_k} + \sum_{j=1}^{n} \beta_j \frac{\partial p (\mpt{w})}{\partial z_j}.
	\end{align*}
	By Theorem~\ref{theo:gupta}, we obtain :
	\begin{equation}
	\left| \gamma_2 \left(1-|p(\mpt{\omega})|^2\right) + \gamma_1^2 \cj{p(\mpt{\omega})} \right| \leq (1-|p(\mpt{\omega})|^2)^2 - |\gamma_1|^2,
	\end{equation}
	which is equivalent to \eqref{ineq-peschl-mult-mult-varia-eq} for polynomials. The inequality extends to all functions $f \in \mathcal{H}(\oud^n, \oud)$.
\end{proof}

\subsection{Distinguished varieties and Schwarz-Pick inequalities}
In the bidisk case, the refined version of Ando's inequality by Agler and McCarthy~\cite{AM:acta} results in corresponding enhancements of Schwarz-Pick type inequalities.

We start by recalling the notion of distinguished variety introduced in~\cite{AM:acta}. 
A \emph{distinguished variety} is a set of the form $V\cap \cud^2$, where $V$ is an algebraic set in $\C^2$ (so there is a polynomial $q\in \C[z,w]$ such that $V = \{(z,w)\in\oud^2 : q(z,w) = 0\}$) with the property that 
$$\overline{V} \cap \partial(\oud^2) = \overline{V} \cap \T^2.$$
Therefore a distinguished variety is the trace on $\oud^2$ of a one-dimensional complex algebraic variety $V$ in $\C^2$ such that $V$ intersects $\oud^2$ and exits the bidisk through its distinguished boundary, $\T^2$, without
intersecting any other part of its topological boundary. A distinguished variety has (\cite{AM:acta}) the following determinantal representation
\begin{equation}\label{eqn:determ}
V\cap \oud^2=\left\{ (z,w)\in \oud^2\,:\, \text{ det } (\Psi(z)-w\id)=0 \right\}
\end{equation}
for some matrix-valued rational function $\Psi$ on the unit disc that is unitary on the unit circle.

Agler and McCarthy proved in~\cite{AM:acta} that for any pair of commuting contractive matrices $(T_1,T_2)$ without unimodular eigenvalues, there is a distinguished variety $V\cap \oud^2$ such that the von-Neumann
inequality holds on $V\cap \oud^2$ for any polynomial $p$ in $\C[z_1,z_2]$, \, \emph{i.e.}
\begin{equation}\label{eqn:Ando}
\| p(T_1,T_2) \|\leq \sup_{(z_1,z_2)\in V \cap \oud^2} |p(z_1,z_2)|.
\end{equation}

\begin{theo}
(a) Let $(a_1,a_2)$ and $(b_1,b_2)$ be two points in the bidisk $\oud^2$. Then there is a distinguished variety $V\cap\oud^2$ such that 
the Schwarz-Pick inequality 
\begin{equation} \label{eq:bidisk}
    \left| \frac{f(a_1,a_2) - f(b_1,b_2)}{1 - \overline{f(a_1,a_2)} f(b_1,b_2)} \right| \leq \max\left\{ \left| \frac{a_1 - b_1}{1 - \overline{a_1}b_1}\right|, \left| \frac{a_2 - b_2}{1 - \overline{a_2}b_2} \right|\right\}
\end{equation}
holds for
any function $f$ which is  holomorphic on the bidisk $\oud^2$ and continuous on $\cud^2$
with $$\sup_{(z_1,z_2)\in V \cap \oud^2} |f(z_1,z_2)| \le 1.$$

(b) Let $(a_1,a_2)$ and $(b_1,b_2)$ be two points in the bidisk $\oud^2$. Then there is a distinguished variety $V\cap\oud^2$ such that 
the Schwarz-Pick inequality  \eqref{eq:bidisk}
holds for
any function $f$ which is  holomorphic in the bidisk $\oud^2$ and for which there is a sequence of positive real number $(r_n)$ convergent to $1$ with $r_n < 1$ such that
$$\sup_{n\ge 1,(z_1,z_2)\in V \cap \oud^2} |f(r_nz_1,r_nz_2)| \le 1.$$
\end{theo}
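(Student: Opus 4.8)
The plan is to specialize the construction used for the polydisk Schwarz--Pick inequality (Theorem~\ref{schwarz-pick-polydisk}) to the case $n=2$, but to replace the Drury--Knese inequality (Lemma~\ref{fu-russo}) by the sharper distinguished-variety inequality \eqref{eqn:Ando} of Agler and McCarthy. Concretely, for $\mpt{a}=(a_1,a_2)$ and $\mpt{b}=(b_1,b_2)$ in $\oud^2$ with $\mpt{a}\neq\mpt{b}$ (the case $\mpt{a}=\mpt{b}$ being trivial), I would set
$$T_i=\begin{pmatrix} a_i & d(a_i-b_i) \\ 0 & b_i\end{pmatrix},\qquad d=\min_{i=1,2}\sqrt{\frac{(1-|a_i|^2)(1-|b_i|^2)}{|a_i-b_i|^2}},$$
exactly as in the proof of Theorem~\ref{schwarz-pick-polydisk}(a), with the convention $\sqrt{\;\cdot\;}=+\infty$ when $a_i=b_i$. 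As there, $T_1$ and $T_2$ are commuting contractions, and since their eigenvalues $a_1,b_1,a_2,b_2$ all lie in $\oud$, the pair $(T_1,T_2)$ has no unimodular eigenvalue. Agler and McCarthy's theorem then furnishes a distinguished variety $V\cap\oud^2$ for which the von Neumann inequality \eqref{eqn:Ando} holds; this $V$ is the variety claimed in both parts of the statement.

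The heart of the argument is to upgrade \eqref{eqn:Ando} from polynomials to functions $f\in\mathcal{A}(\cud^2)$ (holomorphic on $\oud^2$, continuous on $\cud^2$), in the form $\|f(T_1,T_2)\|\le\sup_{V\cap\oud^2}|f|$, where $f(T_1,T_2)$ denotes the upper-triangular matrix with diagonal $(f(\mpt{a}),f(\mpt{b}))$ and upper-right entry $d\,(f(\mpt{a})-f(\mpt{b}))$. For part~(a) I would invoke the density of polynomials in the bidisk algebra in the uniform norm on $\cud^2$ (a classical fact, see~\cite{rudinFunctionTheoryPolydiscs1969}): choosing polynomials $p_m\to f$ uniformly on $\cud^2$, uniform convergence forces $p_m(\mpt{a})\to f(\mpt{a})$ and $p_m(\mpt{b})\to f(\mpt{b})$, hence $p_m(T_1,T_2)\to f(T_1,T_2)$ in norm, while $\sup_{V\cap\oud^2}|p_m|\to\sup_{V\cap\oud^2}|f|$ since $V\cap\oud^2\subset\cud^2$. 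Passing to the limit in \eqref{eqn:Ando} gives $\|f(T_1,T_2)\|\le\sup_{V\cap\oud^2}|f|\le 1$. Reading off that the matrix $f(T_1,T_2)$ is a contraction, the $2\times2$ criterion recalled after Theorem~\ref{parrott} yields $d^2|f(\mpt{a})-f(\mpt{b})|^2\le(1-|f(\mpt{a})|^2)(1-|f(\mpt{b})|^2)$; combining this with the identity $(1-|u|^2)(1-|v|^2)=|1-\overline{u}v|^2-|u-v|^2$ from \eqref{eq:important} and with the elementary computation $1/(d^2+1)=\max_{i}\rho(a_i,b_i)^2$ produces exactly \eqref{eq:bidisk}.

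For part~(b) the same variety $V$ is used, and I would reduce to part~(a) by dilation. For each $n$ the function $f_{r_n}\colon\mpt{z}\mapsto f(r_n\mpt{z})$ is holomorphic on a neighbourhood of $\cud^2$, hence lies in $\mathcal{A}(\cud^2)$, and the hypothesis says precisely that $\sup_{V\cap\oud^2}|f_{r_n}|\le1$. Applying part~(a) to $f_{r_n}$ gives \eqref{eq:bidisk} for $f_{r_n}$, that is, the pseudo-hyperbolic estimate for the values $f(r_n\mpt{a})$ and $f(r_n\mpt{b})$; letting $n\to\infty$ and using that $f$ is continuous at the interior points $\mpt{a},\mpt{b}\in\oud^2$ (so $f(r_n\mpt{a})\to f(\mpt{a})$ and $f(r_n\mpt{b})\to f(\mpt{b})$) recovers \eqref{eq:bidisk} for $f$ itself. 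The one boundary subtlety, namely that $1-\overline{f(\mpt{a})}f(\mpt{b})$ could vanish, is harmless: the contractivity inequality forces $|f(\mpt{a})|,|f(\mpt{b})|\le1$ with a zero right-hand side only when $f(\mpt{a})=f(\mpt{b})$, in which case both sides of \eqref{eq:bidisk} vanish.

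The step I expect to be the main obstacle is this passage from polynomials to $\mathcal{A}(\cud^2)$ in \eqref{eqn:Ando}: one must control the variety-supremum $\sup_{V\cap\oud^2}|p_m|$ and the operator norm $\|p_m(T_1,T_2)\|$ simultaneously, and the clean way to do this is through uniform approximation on all of $\cud^2$, which relies on polynomial density in the polydisk algebra. It is reassuring, and worth recording, that \eqref{eqn:Ando} automatically forces the joint spectrum $\{\mpt{a},\mpt{b}\}$ to lie on $V$: if $\mpt{a}\notin V$ then the defining polynomial $q$ of $V$ satisfies $\sup_{V\cap\oud^2}|q|=0$ while $q(\mpt{a})\neq0$ is an eigenvalue of $q(T_1,T_2)$, violating \eqref{eqn:Ando}. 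Consequently the hypotheses on $\sup_{V\cap\oud^2}|f|$ genuinely bound $f(\mpt{a})$ and $f(\mpt{b})$, and the whole scheme is coherent.
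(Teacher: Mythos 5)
Your proof is correct and follows the same skeleton as the paper's: the same commuting $2\times 2$ contractions $T_1,T_2$ built from $(\mpt{a},\mpt{b})$, the same appeal to Agler--McCarthy to produce a distinguished variety adapted to this pair, the same $2\times 2$ contraction criterion plus the identity \eqref{eq:important} to extract \eqref{eq:bidisk}, and the same reduction of part (b) to part (a) via $f(r_n z_1, r_n z_2)$ and a limit. The one point where you genuinely diverge is the step that upgrades \eqref{eqn:Ando} from polynomials to the bidisk algebra. The paper re-enters the proof of \cite[Theorem 3.1]{AM:acta}: it extracts a pair of commuting unitaries $(U_1,U_2)$ extending $(T_1,T_2)$ with joint spectrum $\overline{V}\cap\T^2$, so that $f(T_1,T_2)$ is a restriction of $f(U_1,U_2)$ and the bound $\|f(T_1,T_2)\|\le\sup_{(z_1,z_2)\in V\cap\oud^2}|f(z_1,z_2)|$ comes directly from the functional calculus for commuting unitaries, with no approximation. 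You instead treat \eqref{eqn:Ando} as a black box and pass to $\mathcal{A}(\cud^2)$ by uniform polynomial approximation on $\cud^2$, which is sound but requires the classical density of polynomials in the bidisk algebra (Rudin \cite{rudinFunctionTheoryPolydiscs1969}); your limiting argument (entrywise convergence of the $2\times 2$ matrices, convergence of the suprema over $V\cap\oud^2\subset\cud^2$) is carried out correctly. Two of your side remarks are worthwhile refinements the paper leaves implicit: the observation that \eqref{eqn:Ando} forces $\mpt{a},\mpt{b}$ to lie on $V$ (so the hypothesis on $f$ is not vacuous), and the handling of the degenerate case $1-\overline{f(\mpt{a})}f(\mpt{b})=0$ by working with the cross-multiplied form of the inequality when taking $n\to\infty$ in part (b).
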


\begin{proof}
Consider the matrices
$$T_1 = \begin{pmatrix}
		a_1 & d(a_1-b_1) \\ 
		0 & b_1 \end{pmatrix}, \quad T_2 = \begin{pmatrix}
		a_2 & d(a_2-b_2) \\ 
		0 & b_2 \end{pmatrix},$$ 
  with 
  $$d = \min \left\{\sqrt{\frac{(1-|a_1|^{2})(1-|b_1|^2)}{|a_1-b_1|^2}}, \sqrt{\frac{(1-|a_2|^{2})(1-|b_2|^2)}{|a_2-b_2|^2}} \right\},$$ 
with the same conventions as in the proof of Theorem~\ref{schwarz-pick-polydisk}, (a).  Following~\cite{AM:acta}, we can also assume that $T_1$ and $T_2$ are jointly diagonalizable (this is the first case in the proof of~\cite[Theorem 3.1]{AM:acta}). It follows from the result proved in~\cite{AM:acta} (see also~\cite[p. 211]{aglerOperatorAnalysisHilbert2020} for details and unexplained terminology) that there is a distinguished variety $V$ such that $T=(T_1,T_2)$ can be extended to a pair of commuting unitaries $U=(U_1,U_2)$ with spectrum $\sigma(U) = \overline{V} \cap \partial(\oud^2) = \overline{V} \cap \T^2$. As $f$ is in the bidisk algebra, $f(T)$ and $f(U)$ are well-defined and $f(T)$ is a restriction of $f(U)$ to $\C^2\times \C^2$. We obtain, as in~\cite{AM:acta}, that 
\begin{equation}\label{eqn:Andof}
\| f(T_1,T_2) \|\leq \sup_{(z_1,z_2)\in V \cap \oud^2} |f(z_1,f_2)|.
\end{equation}
Therefore $f(T_1,T_2)$ is a contraction and the proof of Theorem~\ref{schwarz-pick-polydisk}, (a), implies that inequality \eqref{eq:bidisk} holds true. The second part, (b), follows from (a) applied to the functions $f(r_nz_1,r_nz_2)$ and then making $n\to\infty$. 
\end{proof}
The following result follows in a similar manner from the Agler and McCarthy result and the proof of Theorem~\ref{ineq-peschl-mult-mult-varia}.
\begin{theo}\label{ineq-peschl-disting}
	Let $\mpt{w} = (\omega_1, \omega_2) \in \oud^2$. Then there exists a distinguished variety $V\cap\oud^2$ such that 
	\begin{equation}\label{ineq-peschl-mult-disting}
	|D_2f(\mpt{\omega})| \leq 2 (1 - |D_1f(\mpt{\omega})|^2)
	\end{equation}
 for every $f\in \mathcal{A}(\cud^2)$ with $$\sup_{(z_1,z_2)\in V \cap \oud^2} |f(z_1,z_2)| \le 1.$$
\end{theo}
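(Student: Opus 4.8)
The plan is to mirror exactly the structure used in the proof of Theorem~\ref{ineq-peschl-mult-mult-varia}, but replacing the two-variable Drury--Knese inequality (Lemma~\ref{fu-russo}) by the sharper Agler--McCarthy inequality \eqref{eqn:Ando} on a distinguished variety. First I would specialize to $n=2$ and, for $k=1,2$, introduce the nilpotent-type $3\times 3$ Jordan-block matrices
\[
T_k=\begin{pmatrix}
\omega_k & \alpha_k & \beta_k \\
0 & \omega_k & \alpha_k \\
0 & 0 & \omega_k
\end{pmatrix},\qquad \alpha_k=1-|\omega_k|^2,\quad \beta_k=-\cj{\omega_k}(1-|\omega_k|^2),
\]
which are exactly the matrices appearing in the proof of Theorem~\ref{ineq-peschl-mult-mult-varia}. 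By Theorem~\ref{theo:gupta} each $T_k$ is a contraction, and since all three entries of each $T_k$ are built from the single scalar $\omega_k$, the matrices $T_1$ and $T_2$ commute. The essential point is that $(T_1,T_2)$ is a pair of commuting contractive $3\times 3$ matrices to which the Agler--McCarthy machinery applies.

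Next I would invoke the Agler--McCarthy result \eqref{eqn:Ando}: provided $(T_1,T_2)$ has no unimodular joint eigenvalues, there is a distinguished variety $V\cap\oud^2$ such that $\|p(T_1,T_2)\|\le \sup_{(z_1,z_2)\in V\cap\oud^2}|p(z_1,z_2)|$ for every polynomial $p$, and this passes to functions in the bidisk algebra $\mathcal{A}(\cud^2)$ exactly as in the proof of the previous distinguished-variety theorem (extend $(T_1,T_2)$ to commuting unitaries $U=(U_1,U_2)$ with $\sigma(U)=\overline{V}\cap\T^2$ and note $f(T)$ is a compression of $f(U)$). Consequently, for any $f\in\mathcal{A}(\cud^2)$ with $\sup_{(z_1,z_2)\in V\cap\oud^2}|f(z_1,z_2)|\le 1$, the operator $f(T_1,T_2)$ is a contraction. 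The matrix of $f(T_1,T_2)$ has the same upper-triangular Toeplitz shape as in Theorem~\ref{ineq-peschl-mult-mult-varia}, with superdiagonal entry $\gamma_1=\sum_{j=1}^{2}\alpha_j\,\partial f(\mpt{\omega})/\partial z_j$ and corner entry $\gamma_2$ given by the same second-order combination; applying the contractivity criterion of Theorem~\ref{theo:gupta} to this matrix yields
\[
\left|\gamma_2\left(1-|f(\mpt{\omega})|^2\right)+\gamma_1^2\,\cj{f(\mpt{\omega})}\right|\le \left(1-|f(\mpt{\omega})|^2\right)^2-|\gamma_1|^2,
\]
which is precisely the inequality \eqref{ineq-peschl-mult-disting} after unwinding the definitions of $D_1f(\mpt{\omega})$ and $D_2f(\mpt{\omega})$.

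The main obstacle is the eigenvalue hypothesis in the Agler--McCarthy theorem: their result requires the commuting pair to have \emph{no} unimodular joint eigenvalues, whereas each $T_k$ is a single Jordan block with the repeated eigenvalue $\omega_k$, so the joint eigenvalue is $(\omega_1,\omega_2)\in\oud^2$, which lies strictly inside the bidisk. Thus no unimodular eigenvalue occurs and the hypothesis is satisfied automatically—the genuine work is only to verify this and to handle the reduction from $\mathcal{A}(\cud^2)$ to the variety, both of which are dispatched verbatim as in the proof of the preceding distinguished-variety theorem. I would remark that since the computation producing $\gamma_1,\gamma_2$ and the algebraic manipulation converting the Theorem~\ref{theo:gupta} criterion into \eqref{ineq-peschl-mult-disting} are identical to those already carried out in the proof of Theorem~\ref{ineq-peschl-mult-mult-varia}, the argument can be stated concisely: the only new ingredient is replacing Lemma~\ref{fu-russo} by the variety-refined inequality \eqref{eqn:Ando}.
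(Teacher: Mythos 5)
Your proposal is correct and follows essentially the same route as the paper, whose proof is simply the observation that the result ``follows in a similar manner from the Agler and McCarthy result and the proof of Theorem~\ref{ineq-peschl-mult-mult-varia}'': reuse the commuting contractive upper-triangular Toeplitz matrices $T_1,T_2$, replace Lemma~\ref{fu-russo} by the variety-refined inequality \eqref{eqn:Ando}, pass to $\mathcal{A}(\cud^2)$ via the commuting unitary extension as in the preceding theorem, and finish with the criterion of Theorem~\ref{theo:gupta}. Your explicit check that the joint eigenvalue $(\omega_1,\omega_2)\in\oud^2$ is non-unimodular, so the Agler--McCarthy hypothesis holds, is a detail the paper leaves implicit but is exactly the right verification.
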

Some Nevanlinna–Pick interpolation problems on distinguished varieties in the bidisk have been studied in \cite{JKM}.
\section{Higher order Schwarz-Pick inequalities} \label{sect:5}
Let $f \in \mathcal{H}(\oud, \oud)$ be an analytic function of $\oud$ into itself with $f(z) = \sum_{n=0}^{\infty} a_nz^n$. It has been proved by F.W.~Wiener that for each $k\ge 1$ we have 
\begin{equation}\label{eq:wiener}
    |a_k| \le 1-|a_0|^2.
\end{equation}
We refer for instance to \cite{paulsenBohrInequality2002} for an operator theoretical proof of this inequality and for applications to Bohr's phenomenon. For $k=1$, the inequality \eqref{eq:wiener} gives 
$|f'(0)| \le 1-|f(0)|^2$. Applying this inequality to $F(z)=f((\omega+z)/1+\overline{\omega}z)$, for a fixed $\omega \in \oud$, we obtain the Schwarz-Pick inequality \eqref{eq:Schw-Pick-der}. For an arbitrary $k$, a similar reasoning has been used by Ruscheveyeh~\cite{ruscheweyhSerdica} to obtain the following sharp higher-order inequality for an analytic function $f \in \mathcal{H}(\oud,\oud)$, $z\in\oud$ and $k\ge 1$:
\begin{equation}\label{eq:rus}
    |f^{(k)}(z)| \le \frac{k!(1-|f(z)|^2)}{(1-|z|)^k(1+|z|)}.
\end{equation}

We prove in this section some results related to the estimate \eqref{eq:wiener}.

\begin{theo}\label{th:higher}
    Let $f : \oud \mapsto \cud$ be an analytic function. Assume that $$f(z) = \sum_{n=0}^{\infty} a_nz^n\quad (z\in\oud).$$ Then, for each $n\ge 1$ and each $k\ge 1$ we have 
    \begin{equation}\label{eq:53}
        \left|a_{n+k}(1-|a_0^2|)+a_na_k\overline{a}_0\right|^2 \le
        \left[(1-|a_0|^2)^2-|a_n|^2\right]\cdot \left[(1-|a_0|^2)^2-|a_k|^2\right].
    \end{equation}
\end{theo}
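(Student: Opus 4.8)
The right-hand side of \eqref{eq:53} is exactly the right-hand side of condition \eqref{cond-3} in Theorem~\ref{theo:gupta} in the degenerate case where the three diagonal entries coincide, while the left-hand side matches under the correspondence $\beta \leftrightarrow a_{n+k}$, $\alpha_1 \leftrightarrow a_n$, $\alpha_2 \leftrightarrow a_k$ and $\omega_1 = \omega_2 = \omega_3 \leftrightarrow a_0$. The plan is therefore to manufacture a genuine $3\times 3$ contraction having $a_0$ on its diagonal and $a_n$, $a_k$, $a_{n+k}$ as its super-diagonal and corner entries, and then to read off \eqref{cond-3}. A bare $3\times 3$ nilpotent matrix will not suffice, since its holomorphic functional calculus only exposes $a_0$, $a_1$ and $a_2$; instead I would obtain the desired matrix as a compression of $f(N)$ for a sufficiently large nilpotent shift $N$.

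First I would dispose of the boundary case: if $|a_0|=1$, then $|f|$ attains its maximum at the interior point $0$, so by the maximum modulus principle $f\equiv a_0$, all higher coefficients vanish, and \eqref{eq:53} reduces to $0\le 0$. From now on assume $|a_0|<1$.

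Next, set $m=n+k$ and let $N$ be the truncated backward shift on $\C^{m+1}$, with orthonormal basis $e_0, \dots, e_m$ and $N e_j = e_{j-1}$ for $j\ge 1$, $N e_0 = 0$. Then $N$ is a partial isometry, so $\|N\|\le 1$. For $r\in(0,1)$ put $f_r(z)=f(rz)$; since $f$ maps $\oud$ into $\cud$, the function $f_r$ belongs to $\mathcal{A}(\cud)$ with $\|f_r\|_\infty\le 1$, and von Neumann's inequality gives $\|f_r(N)\|\le 1$. Because $N$ is nilpotent with a Toeplitz functional calculus, one computes $(f_r(N))_{ij}=a_{j-i}\,r^{\,j-i}$ for $i\le j$ and $0$ otherwise. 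Compressing $f_r(N)$ to the coordinate subspace spanned by the orthonormal vectors $e_0, e_n, e_m$ yields the upper triangular matrix
\[
\begin{pmatrix} a_0 & a_n r^n & a_{n+k} r^{n+k} \\ 0 & a_0 & a_k r^k \\ 0 & 0 & a_0 \end{pmatrix},
\]
which is a contraction, being a compression of a contraction.

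Finally, I would apply the necessity direction of Theorem~\ref{theo:gupta} to this matrix, taking $\omega_1=\omega_2=\omega_3=a_0$ (so that $|\omega_2|<1$ places us in the first case), $\alpha_1=a_n r^n$, $\alpha_2=a_k r^k$ and $\beta=a_{n+k}r^{n+k}$. Condition \eqref{cond-3} then reads
\[
r^{2(n+k)}\left|a_{n+k}(1-|a_0|^2)+a_n a_k\overline{a}_0\right|^2 \le \left[(1-|a_0|^2)^2-r^{2n}|a_n|^2\right]\left[(1-|a_0|^2)^2-r^{2k}|a_k|^2\right],
\]
and letting $r\to 1^-$ gives \eqref{eq:53}. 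The conceptual heart of the argument is the choice of compression that simultaneously reads off $a_n$, $a_k$ and $a_{n+k}$; the only technical point requiring care is that $f$ need not extend continuously to $\cud$, so von Neumann's inequality applies neither to $f$ itself nor to its Taylor truncation (whose sup norm over $\oud$ may exceed one), which is precisely why the dilations $f_r$ are introduced, the Toeplitz bookkeeping and the final limit being routine.
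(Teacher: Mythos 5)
Your proof is correct and follows essentially the same route as the paper: both realize the $3\times 3$ upper-triangular Toeplitz matrix with entries $a_0, a_n, a_k, a_{n+k}$ as a compression of a contraction attached to $f$, and then read off inequality \eqref{eq:53} from condition \eqref{cond-3} of Theorem~\ref{theo:gupta}. The only difference is technical: the paper compresses the multiplication operator $M_f$, which is contractive on $H^2(\oud)$ simply because $\|f\|_{\infty}\le 1$, directly to $\mathrm{span}(1,z^n,z^{n+k})$, which makes the nilpotent shift, the dilations $f_r$, and the limit $r\to 1^-$ unnecessary.
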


\begin{proof}
    As $\|f\|_{\infty} \le 1$, the multiplication operator $M_f$ given by $M_f(g) = fg$ acts contractively on the Hardy space $H^2(\oud)$. Recall that $\{z^n : n\ge 0\}$ is an orthonormal basis of $H^2(\oud)$. The compression $T=P_KM_f\mid K$ of $M_f$ to the $3$-dimensional Euclidean space $K = \textrm{span}(1,z^n,z^{n+k})$ is also a contraction. The matrix of $T$ is given by 
    $$T=\begin{pmatrix}
		a_0 & a_n & a_{n+k} \\
		0 & a_0 & a_k \\
		0 & 0 & a_0
	\end{pmatrix}.$$
 Then \eqref{eq:53} is a consequence of Theorem~\ref{theo:gupta}. 
\end{proof}

When $a_0 = 0$ we obtain the following consequence.
\begin{cor}
    Let $f$ be a analytic function of $\oud$ into $\cud$ with $f(0)=0$ and $f(z) = \sum_{n=1}^{\infty} a_nz^n$ for $z\in\oud$. Then 
    \begin{equation}\label{eq:54}
        \left|a_{n+k}\right| \le
        \sqrt{1-|a_n|^2}\cdot \sqrt{1-|a_k|^2}.
    \end{equation}
\end{cor}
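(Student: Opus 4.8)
The plan is to obtain this corollary as the special case $a_0 = 0$ of Theorem~\ref{th:higher}. The hypothesis $f(0) = 0$ is exactly the statement that the constant coefficient vanishes in the Taylor expansion of $f$, so that $f(z) = \sum_{n \ge 1} a_n z^n$ and Theorem~\ref{th:higher} applies verbatim with $a_0 = 0$. Since the theorem was established for an arbitrary analytic $f : \oud \to \cud$, no additional hypothesis is needed here beyond $f(0) = 0$.

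First I would substitute $a_0 = 0$ into the inequality \eqref{eq:53}. On the left-hand side the prefactor $1 - |a_0|^2$ collapses to $1$ and the cross term $a_n a_k \overline{a}_0$ vanishes, so the left-hand side becomes simply $|a_{n+k}|^2$. On the right-hand side each bracket $(1 - |a_0|^2)^2 - |a_\bullet|^2$ reduces to $1 - |a_\bullet|^2$. Hence \eqref{eq:53} specializes to
\[
|a_{n+k}|^2 \le (1 - |a_n|^2)(1 - |a_k|^2).
\]

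Finally, both sides of this inequality are nonnegative; the nonnegativity of each factor on the right follows, for instance, from Wiener's inequality \eqref{eq:wiener} applied with $a_0 = 0$, which gives $|a_n|, |a_k| \le 1$, so that extracting the square root is legitimate and yields exactly \eqref{eq:54}. There is no genuine obstacle in this argument: it is a direct specialization of the preceding theorem, the only point worth recording being that one should verify the right-hand side is nonnegative before taking square roots.
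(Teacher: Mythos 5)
Your proof is correct and is essentially the same as the paper's, which obtains the corollary precisely by setting $a_0=0$ in Theorem~\ref{th:higher} and simplifying \eqref{eq:53}. Your extra remark that each factor $1-|a_n|^2$, $1-|a_k|^2$ is nonnegative (so the square root step is legitimate) is a sound, if minor, point that the paper leaves implicit.
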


For $n=k=1$, and $a_0=0$, we obtain the inequality $|a_2|\le 1-|a_1|^2$. Applying this inequality to \eqref{eq:peschl} we obtain Yamashita's inequality $|D_2f(\omega)| \le 2(1 - |D_1f(\omega)|^2)$.

The following consequence is an improvement of Wiener's inequality~\eqref{eq:wiener}.

\begin{cor}
    Let $f$ be a analytic function of $\oud$ into $\cud$ with $f(z) = \sum_{n=0}^{\infty} a_nz^n$ for $z\in\oud$. Then 
    $$ 1 - |a_0|^2 - |a_n| \ge \frac{\left|a_{2n}(1-|a_0^2|)+a_n^2\overline{a}_0\right|}{2(1-|a_0|^2)}.$$
\end{cor}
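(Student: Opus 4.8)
The plan is to obtain this corollary as an immediate consequence of Theorem~\ref{th:higher} by specializing to the diagonal case $k=n$ and then feeding Wiener's inequality \eqref{eq:wiener} back into the resulting bound. First I would set $k=n$ in \eqref{eq:53}. Since $a_{n+k}=a_{2n}$ and $a_na_k=a_n^2$, the right-hand side becomes a perfect square and the estimate reads
\[
\left|a_{2n}(1-|a_0|^2)+a_n^2\overline{a}_0\right|^2 \le \left[(1-|a_0|^2)^2-|a_n|^2\right]^2 .
\]
Taking square roots (both sides being nonnegative) reduces matters to controlling the single quantity $(1-|a_0|^2)^2-|a_n|^2$.

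The second step is purely algebraic. By Wiener's inequality \eqref{eq:wiener} we have $|a_n|\le 1-|a_0|^2$, so $(1-|a_0|^2)^2-|a_n|^2\ge 0$; this fixes the sign and lets us drop the outer absolute value. Factoring the difference of squares gives
\[
(1-|a_0|^2)^2-|a_n|^2=\bigl((1-|a_0|^2)-|a_n|\bigr)\bigl((1-|a_0|^2)+|a_n|\bigr),
\]
and invoking \eqref{eq:wiener} a second time bounds the larger factor by $(1-|a_0|^2)+|a_n|\le 2(1-|a_0|^2)$. Combining these observations yields
\[
\left|a_{2n}(1-|a_0|^2)+a_n^2\overline{a}_0\right| \le 2(1-|a_0|^2)\bigl((1-|a_0|^2)-|a_n|\bigr),
\]
and dividing through by $2(1-|a_0|^2)$ produces exactly the claimed inequality.

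There is essentially no genuine obstacle here: the corollary is a two-line consequence of Theorem~\ref{th:higher} once one recognizes the difference-of-squares factorization and applies \eqref{eq:wiener} twice. The only point deserving a word of care is the boundary case $|a_0|=1$, where dividing by $1-|a_0|^2$ is illegitimate. In that situation the maximum modulus principle forces $f$ to be the constant $a_0$, so all higher coefficients vanish and the asserted estimate holds trivially. I would therefore assume $|a_0|<1$ throughout the main argument and dispose of the degenerate case in a single remark.
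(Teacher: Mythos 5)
Your proof is correct and follows essentially the same route as the paper: specialize \eqref{eq:53} to $k=n$, take square roots, factor the difference of squares $(1-|a_0|^2)^2-|a_n|^2$, and use Wiener's bound $|a_n|\le 1-|a_0|^2$ to replace the factor $1-|a_0|^2+|a_n|$ by $2(1-|a_0|^2)$. Your explicit treatment of the degenerate case $|a_0|=1$ is a small extra precaution that the paper leaves implicit.
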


\begin{proof}
    Applying \eqref{eq:53} for $k=n$ we obtain 
$$ \left|a_{2n}(1-|a_0^2|)+a_n^2\overline{a}_0\right| \le
        \left[(1-|a_0|^2)^2-|a_n|^2\right].$$
Therefore 
$$ 1 - |a_0|^2 - |a_n| \ge \frac{\left|a_{2n}(1-|a_0^2|)+a_n^2\overline{a}_0\right|}{1-|a_0|^2 + |a_n|} \ge \frac{\left|a_{2n}(1-|a_0^2|)+a_n^2\overline{a}_0\right|}{2(1-|a_0|^2)}.$$
The proof is complete.
\end{proof}

\section{Appendix} \label{sect:6}

The objective of this Appendix is to revisit Parrott's theorem as stated in Theorem~\ref{parrott}. We adopt the approach presented by \citeauthor{davisNormpreservingDilationsTheir1982} in \autocite{davisNormpreservingDilationsTheir1982}, making some modifications, particularly regarding the selection of solutions with minimal norms. 

This appendix is primarily intended for readers interested in Schwarz-Pick inequalities who may not have an extensive background in operator theory.

We start by recalling the following lemma.

\begin{lem}[Douglas \autocite{douglasMajorizationFactorizationRange1966}]\label{douglas}
	Let $L, M_1, M_2$ be Hilbert spaces. Suppose that $A \in \mathcal{B}(L,M_1)$, $B \in \mathcal{B}(L,M_2)$ and $c \geq 0$.
	Then, $B^*B \leq c^2 A^* A$ if and only if  there exists $C \in \mathcal{B}(M_1,M_2)$ such that \begin{equation}\label{eq-douglas}
		\begin{cases} B=CA, \\ \|C\| \leq c \end{cases}
	\end{equation}
	
	Moreover, if it is the case, there exists a unique operator $C_0$ satisfying \eqref{eq-douglas} such that $\text{Im}(A)^{\perp} \subset \text{Ker}(C_0)$. The operator $C_0$ satisfies \[\|C_0\|^2 = \inf \{ \, \|C\|^2 \, : \, C \text{ satisfies } \eqref{eq-douglas}\} = \inf \{ \mu \geq 0 \, : \, B^*B \leq  \mu A^*A\}\] and will thus be referred as the \emph{minimal solution} of the equation $B=CA$.
\end{lem}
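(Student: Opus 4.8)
The plan is to prove the two directions of the equivalence directly, to construct the distinguished solution $C_0$ explicitly on the range of $A$, and then to read off the three norm identities from that construction.

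For the easy implication, suppose $B = CA$ with $\|C\| \le c$. Then for every $x \in L$ one has $\|Bx\|^2 = \|CAx\|^2 \le c^2\|Ax\|^2$, which rewrites as $\langle B^*Bx, x\rangle \le c^2\langle A^*Ax, x\rangle$; since $B^*B$ and $A^*A$ are self-adjoint, this is precisely $B^*B \le c^2 A^*A$. For the converse, assume $B^*B \le c^2 A^*A$, equivalently $\|Bx\| \le c\|Ax\|$ for all $x \in L$. First I would define $C_0$ on $\text{Im}(A)$ by $C_0(Ax) := Bx$. This is well defined: if $Ax_1 = Ax_2$, then $\|B(x_1-x_2)\| \le c\|A(x_1-x_2)\| = 0$, so $Bx_1 = Bx_2$. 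The same estimate gives $\|C_0(Ax)\| \le c\|Ax\|$, so $C_0$ extends by continuity to a bounded operator of norm $\le c$ on the closed subspace $\overline{\text{Im}(A)}$. I then extend $C_0$ to all of $M_1$ by setting it equal to $0$ on $\overline{\text{Im}(A)}^{\perp} = \text{Im}(A)^{\perp}$; since $M_1 = \overline{\text{Im}(A)} \oplus \text{Im}(A)^{\perp}$ orthogonally, the resulting operator still has norm $\le c$, satisfies $B = C_0A$, and verifies $\text{Im}(A)^{\perp} \subset \text{Ker}(C_0)$ by construction.

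For uniqueness, any $C$ solving $B = CA$ must agree with $C_0$ on $\text{Im}(A)$, hence on $\overline{\text{Im}(A)}$ by continuity; if in addition $\text{Im}(A)^{\perp} \subset \text{Ker}(C)$, then $C$ and $C_0$ also agree on $\text{Im}(A)^{\perp}$, so $C = C_0$. The same remark yields the norm minimality: for an arbitrary solution $C$ of $B = CA$, its restriction to $\overline{\text{Im}(A)}$ coincides with $C_0|_{\overline{\text{Im}(A)}}$, whence $\|C_0\| = \|C_0|_{\overline{\text{Im}(A)}}\| \le \|C\|$; as $C_0$ is itself a solution, $\|C_0\|^2 = \inf\{\|C\|^2 : B = CA\}$. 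Finally, to identify this quantity with $\inf\{\mu \ge 0 : B^*B \le \mu A^*A\}$, I note that $C_0$ does not depend on $c$, so by the easy direction $B^*B \le \|C_0\|^2 A^*A$, placing $\|C_0\|^2$ in the set; conversely every $\mu$ in the set produces, via the construction, a solution of norm $\le \sqrt{\mu}$, forcing $\|C_0\| \le \sqrt{\mu}$. Hence the infimum equals $\|C_0\|^2$.

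I expect no single step to be a genuine obstacle; the only points requiring care are the bookkeeping around closures — using $\overline{\text{Im}(A)}^{\perp} = \text{Im}(A)^{\perp}$ and continuity to pass from $\text{Im}(A)$ to its closure — and the observation that $C_0$ is independent of $c$, which is what lets me chain the three infima together into a single identity.
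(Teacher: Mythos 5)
Your proof is correct, and there is an important point of comparison to make: the paper does not prove this lemma at all. It is stated as a recalled result, attributed to Douglas's original paper, and the next item in the Appendix immediately builds on it; so there is no internal proof to measure your argument against. What you have written is the standard (essentially Douglas's original) argument, and every step checks out: the easy direction via quadratic forms; the construction of $C_0$ by $C_0(Ax):=Bx$ on $\operatorname{Im}(A)$, which the majorization $\|Bx\|\le c\|Ax\|$ makes well defined and bounded; the continuous extension to $\overline{\operatorname{Im}(A)}$ followed by extension by zero on $\operatorname{Im}(A)^{\perp}=\overline{\operatorname{Im}(A)}^{\perp}$; uniqueness and norm minimality from the fact that any solution of $B=CA$ must agree with $C_0$ on $\overline{\operatorname{Im}(A)}$; and the identification of $\|C_0\|^2$ with $\inf\{\mu\ge 0 : B^*B\le \mu A^*A\}$, where your observation that $C_0$ is independent of $c$ is exactly the point that lets the two infima be chained (and note your argument does not even need that infimum to be attained, since you bound it from both sides). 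One further remark in your favor: the defining property of your $C_0$ --- vanishing on $\operatorname{Im}(A)^{\perp}$ --- is precisely the characterization of ``minimal solution'' that the paper uses repeatedly afterwards, for instance in \Cref{parrott-vect-col} and \Cref{parrott-col-iso} and in the second case of the proof of \Cref{theo:gupta}, so your self-contained proof supplies exactly the mechanism the paper treats as a black box.
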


From this lemma, we deduce the following result about column matrices. Recall that the defect operator of $B$ is given by $D_B = (\id-B^*B)^{1/2}$. 

\begin{prop}\label{parrott-vect-col}
	Let $H, K_1, K_2$ be Hilbert spaces. Suppose that $A \in \mathcal{B}(H,K_1)$ and $B \in \mathcal{B}(H,K_2)$ are contractions.
	Then, $\begin{bmatrix}
		A \\B
	\end{bmatrix} : H_1 \to K_1 \oplus K_2$ is a contraction if and only if there exists a contraction $V \in \mathcal{B}(H, K_1) $ such that $A = V D_B$.
	
	Moreover, if it is the case, there exists a unique contraction $V_0$ such that $A = V_0 D_B$ and $\operatorname{Im}(D_B)^{\perp} \subset \operatorname{Ker}(V_0)$. Then $V_0$ satisfies \[\|V_0\| = \inf\{ \, \|V\| \, : \,  A = V D_B \, \}\] and will thus be referred as the \emph{minimal solution of the equation $A = V D_B$}.
\end{prop}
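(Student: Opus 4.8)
The plan is to reduce everything to Douglas's lemma (Lemma~\ref{douglas}) once I have identified precisely when the column operator is a contraction. First I would compute, for every $h\in H$,
\[
\left\| \begin{bmatrix} A \\ B \end{bmatrix} h \right\|^2 = \|Ah\|^2 + \|Bh\|^2 = \langle (A^*A + B^*B)h, h\rangle .
\]
Hence $\begin{bmatrix} A \\ B \end{bmatrix}$ is a contraction if and only if $A^*A + B^*B \le \id$, which rearranges to
\[
A^*A \le \id - B^*B = D_B^2 = D_B^* D_B ,
\]
where I use that $D_B = (\id - B^*B)^{1/2}$ is self-adjoint.

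Second, I would invoke Lemma~\ref{douglas} with $c = 1$, taking the right-hand factor to be $D_B \in \mathcal{B}(H)$ and the factored operator to be $A \in \mathcal{B}(H,K_1)$. Concretely, in the notation of Lemma~\ref{douglas} one sets $L = H$, $M_1 = H$, $M_2 = K_1$, lets the operator called $A$ there be our $D_B$ and the operator called $B$ there be our $A$, and lets $C$ there be $V$. The inequality $A^*A \le D_B^* D_B$ obtained above is then exactly Douglas's hypothesis $B^*B \le c^2 A^*A$, so the lemma yields an operator $V\in\mathcal{B}(H,K_1)$ with $A = V D_B$ and $\|V\| \le 1$, that is, a contraction. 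Conversely, any factorization $A = V D_B$ with $\|V\|\le 1$ forces $A^*A \le D_B^2$ and hence the column is a contraction. This proves the equivalence.

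Third, for the \emph{moreover} part, the existence and uniqueness of a solution $V_0$ with $\operatorname{Im}(D_B)^{\perp} \subset \operatorname{Ker}(V_0)$, together with the minimality $\|V_0\| = \inf\{\|V\| : A = V D_B\}$, is inherited directly from the minimal-solution statement of Lemma~\ref{douglas}: the range condition $\operatorname{Im}(A)^{\perp}\subset\operatorname{Ker}(C_0)$ stated there becomes $\operatorname{Im}(D_B)^{\perp}\subset\operatorname{Ker}(V_0)$ under the above dictionary. Since a contractive solution exists, the minimal-norm solution has norm at most one and is therefore itself a contraction, as required.

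The proof is essentially bookkeeping, and the only point needing care is the correct dictionary between the symbols $A,B,C$ appearing in Douglas's lemma and the symbols $A,B,V$ of the present statement, the key observation being that $D_B$ plays the role of the right-hand factor whose range controls the kernel of the minimal solution.
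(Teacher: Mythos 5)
Your proof is correct and follows exactly the paper's own argument: identify contractivity of the column with $A^*A \le \id - B^*B = D_B^*D_B$ and then apply Douglas's lemma (Lemma~\ref{douglas}) with the dictionary you describe, the \emph{moreover} clause being inherited from the minimal-solution part of that lemma. The paper's proof is just a terser version of the same two steps, so there is nothing to add.
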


\begin{proof} The column matrix $\begin{bmatrix}
			A \\ B
		\end{bmatrix}$ is a contraction if and only if $A^*A \leq \id - B^* B = D_B^* D_B$. Using Lemma~\ref{douglas}, we obtain $A = V D_B$ with $\|V\| \le 1$.
\end{proof}

\begin{cor}\label{parrott-col-iso}
	Let $H, K_1, K_2, A, B$ be as in \Cref{parrott-vect-col}, and let $U \in \mathcal{B}(H)$ be an arbitrary (but fixed) isometry.
	Then, $\begin{bmatrix}
		A \\ B
	\end{bmatrix} : H_1 \to K_1 \oplus K_2$ is a contraction if and only if there exists a contraction $V \in \mathcal{B}(H, K_1) $ such that $A = V U D_B$.
	
	Moreover, if it is the case, there exists a unique contraction $V_0$ such that $A=V_0UD_B$ and $\operatorname{Im}(U D_B)^{\perp}  \subset \operatorname{Ker}(V_0)$. Then the operator $V_0$ satisfies \[\|V_0\| = \inf \{ \, \|V\| \, : \, A=VUD_B \,\} \] and will thus be referred as the \emph{minimal solution} of the equation $A=VUD_B$.
\end{cor}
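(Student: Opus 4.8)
The plan is to obtain the whole statement, equivalence and minimal solution alike, as a single application of Douglas' lemma (\Cref{douglas}), in exact parallel with the proof of \Cref{parrott-vect-col}; the only new ingredient is that the factoring operator $D_B$ is replaced by $UD_B$, and the isometry $U$ will turn out to play no role in the underlying inequality.

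First I would reformulate contractivity of the column operator. For every $x\in H$ we have $\|Ax\|^2+\|Bx\|^2=\langle(A^*A+B^*B)x,x\rangle$, so $\begin{bmatrix}A\\B\end{bmatrix}$ is a contraction if and only if $A^*A+B^*B\le\id$, that is $A^*A\le\id-B^*B$.

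The key step, and the only place where the hypothesis on $U$ is used, is the identity
\[
(UD_B)^*(UD_B)=D_B^*U^*UD_B=D_B^2=\id-B^*B,
\]
valid because $U$ is an isometry, so $U^*U=\id$, and $D_B=(\id-B^*B)^{1/2}$ is self-adjoint. Consequently the contractivity condition from the previous paragraph reads $A^*A\le(UD_B)^*(UD_B)$. It is worth emphasizing that the isometry disappears from this inequality even though it remains present in the factorization to be produced.

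Finally I would invoke \Cref{douglas} with $c=1$, applied to the factorization $A=V\,(UD_B)$ (its operators $B$, $A$, $C$ being, respectively, our $A$, $UD_B$, $V$). The lemma asserts that $A^*A\le(UD_B)^*(UD_B)$ holds if and only if there is a contraction $V\in\mathcal{B}(H,K_1)$ with $A=VUD_B$; combined with the first two paragraphs this is exactly the claimed equivalence. The same lemma then yields a unique solution $V_0$ subject to $\operatorname{Im}(UD_B)^{\perp}\subset\operatorname{Ker}(V_0)$, together with $\|V_0\|=\inf\{\|V\|:A=VUD_B\}$, which is the asserted minimal solution. I expect no genuine obstacle: once the displayed identity is in hand, the argument is a verbatim transcription of \Cref{douglas}, and the isometry hypothesis is used precisely, and only, to collapse $(UD_B)^*(UD_B)$ to $\id-B^*B$.
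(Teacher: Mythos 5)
Your proof is correct, but it takes a more direct route than the paper. The paper deduces the corollary from \Cref{parrott-vect-col}: it first produces the minimal solution $W$ of $A=WD_B$ (the $U=\id$ case), then transports it by setting $V=WU^{*}$, which requires the separate verification that $U^{*}$ maps $\operatorname{Im}(UD_B)^{\perp}$ into $\operatorname{Im}(D_B)^{\perp}$ so that the kernel condition survives, and it leaves the uniqueness and norm-minimality assertions as ``easy to see.'' You instead apply \Cref{douglas} once, directly to the factorization $A=V(UD_B)$, after observing that $(UD_B)^{*}(UD_B)=D_B U^{*}UD_B=\id-B^{*}B$; this identification is exactly right (in the notation of \Cref{douglas}, take $L=H$, $M_1=H$, $M_2=K_1$, $c=1$), and it delivers the equivalence in both directions, the unique $V_0$ with $\operatorname{Im}(UD_B)^{\perp}\subset\operatorname{Ker}(V_0)$, and the equality $\|V_0\|=\inf\{\|V\|: A=VUD_B\}$ in a single stroke. (The infimum in \Cref{douglas} is over contractive solutions, but since a contractive solution exists, enlarging the set to all solutions does not change the infimum, so this matches the statement.) What your approach buys is economy and completeness: no transport argument, and the minimality claims come packaged with the lemma rather than asserted. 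What the paper's approach buys is the explicit relation $V_0=W_0U^{*}$ between the minimal solution for $UD_B$ and the minimal solution for $D_B$, which makes the role of the isometry visible at the level of the solutions rather than, as you nicely emphasize, invisible at the level of the positivity inequality.
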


\begin{proof} It is enough to prove the sufficiency part.
By \Cref{parrott-vect-col}, if $\begin{bmatrix}
		A \\ B 
	\end{bmatrix}$ is a contraction, there exists a contraction $W \in \mathcal{B}(H, K_1)$ such that $A = W D_B$. Moreover, $W$ can be chosen such that $W=0$ on $\text{Im}(D_B)^{\perp}$ (and in this case, the minimal solution $W_0$ is unique).
 
    Now, let $V=WU^*$. As $U$ is an isometry, it is easy to see that $V$ is a contraction and that $VUD_B=WD_B=A$. 
    Moreover, $V=0$ on $\text{Im}(UD_B)^{\perp}$. Indeed, let $x \in \text{Im}(UD_B)^{\perp}$. For all $x' \in H$, $\langle x, UD_Bx' \rangle = 0$, which can be rewritten $\langle U^*x, D_Bx' \rangle = 0$. Thus, for $x \in \text{Im}(UD_B)^{\perp}$, $U^*x \in \text{Im}(D_B)^{\perp}$ and, then, $Vx=WU^*x = 0$ (by minimality of $W$). It is moreover easy to see that there exists a unique $V$ such that $A=VUD_B$ and $V=0$ on $\text{Im}(D_B)^{\perp}$.
    \end{proof}

\begin{cor}\label{parrott-vect-ligne}
	Let $H_1, H_2, K$ be Hilbert spaces. Suppose that $A \in \mathcal{B}(H_1,K)$ and $B \in \mathcal{B}(H_2,K)$ are contractions.
	Then, $\begin{bmatrix}
		A & B
	\end{bmatrix} : H_1 \oplus H_2 \to K$ is a contraction if and only if there exists a contraction $V \in \mathcal{B}(H_1,K)$ such that $A = D_{B^*}V$.
	
	Moreover, if it is the case, there exists a unique contraction $V_0$ such that $A = D_{B^*} V_0$ and $\operatorname{Im}(D_{B^*})^{\perp} \subset \operatorname{Ker}(V_0^*)$. We have \[\|V_0\| = \inf\{ \, \|V\| \, : \,  A = D_{B^*}V \, \}.\] The operator $V_0$ will be referred as the \emph{minimal solution} of the equation $A = D_{B^*}V$.
\end{cor}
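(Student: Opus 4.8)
The plan is to reduce this row-matrix statement to the column-matrix statement of Proposition~\ref{parrott-vect-col} by passing to adjoints, using that an operator is a contraction if and only if its adjoint is. First I would note that $\begin{bmatrix} A & B \end{bmatrix} : H_1 \oplus H_2 \to K$ is a contraction if and only if its adjoint $\begin{bmatrix} A^* \\ B^* \end{bmatrix} : K \to H_1 \oplus H_2$ is a contraction. Since $A^* \in \mathcal{B}(K, H_1)$ and $B^* \in \mathcal{B}(K, H_2)$ are contractions (as adjoints of contractions), Proposition~\ref{parrott-vect-col}, applied with the common domain now being $K$ and the codomains being $H_1$ and $H_2$, characterizes this: the column matrix is a contraction if and only if there is a contraction $V' \in \mathcal{B}(K, H_1)$ with $A^* = V' D_{B^*}$, where the relevant defect operator is $D_{B^*} = (\id - BB^*)^{1/2}$.

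Next I would take adjoints of this last relation. Because $D_{B^*}$ is self-adjoint, being the positive square root of the self-adjoint operator $\id - BB^*$, the equation $A^* = V' D_{B^*}$ becomes $A = D_{B^*}V$ upon setting $V := (V')^* \in \mathcal{B}(H_1, K)$, and $V$ is a contraction precisely because $V'$ is. This establishes the desired equivalence, and the correspondence $V \mapsto V^*$ is a norm-preserving bijection between the solution set of $A = D_{B^*}V$ and that of $A^* = V' D_{B^*}$.

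For the uniqueness and minimality clause I would transport the corresponding clause of Proposition~\ref{parrott-vect-col} through this same adjoint correspondence. The proposition furnishes a unique contraction $V'_0$ with $A^* = V'_0 D_{B^*}$ and $\operatorname{Im}(D_{B^*})^{\perp} \subset \operatorname{Ker}(V'_0)$; putting $V_0 := (V'_0)^*$ turns $\operatorname{Ker}(V'_0)$ into $\operatorname{Ker}(V_0^*)$, which is exactly the advertised condition $\operatorname{Im}(D_{B^*})^{\perp} \subset \operatorname{Ker}(V_0^*)$, and the norm identity $\|V_0\| = \inf\{\|V\| : A = D_{B^*}V\}$ follows since $\|V_0\| = \|V'_0\|$ and the infima match under the bijection $V \leftrightarrow V^*$. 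The only real care required, and the main (mild) obstacle, is to line up the four Hilbert spaces correctly when invoking Proposition~\ref{parrott-vect-col} and to check that the kernel condition on $V'_0$ transfers to a condition on $V_0^*$ rather than on $V_0$; once the adjoint substitution is tracked carefully, no computation beyond this bookkeeping is needed.
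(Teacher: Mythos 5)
Your proof is correct and is essentially the paper's own argument: the paper likewise observes that $\begin{bmatrix} A & B \end{bmatrix}$ is a contraction if and only if its adjoint $\begin{bmatrix} A^* \\ B^* \end{bmatrix}$ is, and then applies Proposition~\ref{parrott-vect-col}. You have merely spelled out the bookkeeping (self-adjointness of $D_{B^*}$, the norm-preserving correspondence $V \leftrightarrow V^*$, and the transfer of the kernel condition to $V_0^*$) that the paper leaves implicit.
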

\begin{proof}
	Observe that $\begin{bmatrix}
		A & B
	\end{bmatrix}$ is a contraction if and only if $\begin{bmatrix}
		A & B
	\end{bmatrix}^* 
	= \begin{bmatrix}
		A^* \\ B^*
	\end{bmatrix}$
	 is a contraction, and then apply \Cref{parrott-vect-col}.
\end{proof}
\begin{proof}[Proof of Theorem \ref{parrott}]
	First of all, the existence of two contractions $Z \in \mathcal{B}(H_1, K_1)$ and $Y \in \mathcal{B}(H_2,K_2)$ such that $D=D_{C^*}Y$ and $A=ZD_C$ comes from \Cref{parrott-vect-col} and \Cref{parrott-vect-ligne}, as $\begin{bmatrix}
		A \\ C
	\end{bmatrix}$ and $\begin{bmatrix}
		C & D
	\end{bmatrix}$ are contractions. We denote the minimal solutions by $Y_0$, and respectively $Z_0$.
	
	Set $\mathbf{A}=\begin{bmatrix}A & B \end{bmatrix}$ and $\mathbf{B}=\begin{bmatrix}C & D \end{bmatrix}$, so that we have $T= \begin{bmatrix}
		\mathbf{A} \\ \mathbf{B}
	\end{bmatrix}$, with $||\mathbf{B}|| \leq 1$.
	Now, using that $T D_T = D_{T^*}T$, we have
	\begin{align*}
		\mathbf{Id}_{H_1 \oplus H_2} - \mathbf{B^*}\mathbf{B} & = \begin{bmatrix}
			\mathrm{Id}_{H_1}-C^* C & -C^* D \\
			-D^*C & \mathrm{Id}_{H_2}-D^*D
		\end{bmatrix} \\
		& = \begin{bmatrix}
			\id_{H_1}-C^* C & -C^*D_{C^*}Y_0 \\
			-Y_0^*D_{C^*}C & \id_{H_2}-Y_0^*D_{C^*}D_{C^*}Y_0
		\end{bmatrix} \\
		& = \begin{bmatrix}
			\id_{H_1}-C^* C & -D_CC^*Y_0 \\
			-Y_0^*CD_C & \id_{H_2}-Y_0^*Y_0+Y_0^*CC^*Y_0
		\end{bmatrix} \\
		& = \mathbf{S^* S},
	\end{align*}
 where $\mathbf{S} = \begin{bmatrix}
					D_C  & -C^*Y_0 \\
					0 & D_{Y_0}
			\end{bmatrix}$.
			
   For every $w \in H_1 \oplus H_2$, we have 
   $$\ps{(\mathbf{Id}_{H_1 \oplus H_2} - \mathbf{B^*}\mathbf{B})w,w} = \ps{\mathbf{S^*}\mathbf{S}w,w},$$ 
   which is equivalent to $\|\mathbf{S}w\| = \left\|D_{\mathbf{B}}w\right\|$. Thus, there is an isometry $\mathbf{U} \in \mathcal{B}(H_1 \oplus H_2)$ such that $\mathbf{S}=\mathbf{U}D_{\mathbf{B}}$. Indeed, let $U : \text{Im}\left(D_{\mathbf{B}}\right) \to H_1 \oplus H_2$, $D_{\mathbf{B}} x \mapsto \mathbf{S}x$. We extend $\mathbf{U}$ by continuity to $\cj{\text{Im}\left(D_{\mathbf{B}}\right)}$, and we set $\mathbf{U}= \mathrm{Id}$ on $\text{Im}\left(D_{\mathbf{B}}\right)^{\perp}$.

 Suppose that $T$ is a contraction. Then, by Corollary~\ref{parrott-col-iso}, there exists a contraction \[\mathbf{V}=\begin{bmatrix} V_1 & V_2 \end{bmatrix} \in \mathcal{B}(H_1 \oplus H_2, K_1)\] such that
		\begin{empheq}[left=\empheqlbrace]{alignat=2}
			& \mathbf{A} = \mathbf{V} \mathbf{U} D_{\mathbf{B}} \label{A=VUD_B}, \\
			& \mathbf{V} = 0 \text{ on } \text{Im}(\mathbf{S})^{\perp}. \label{v=0orth}
		\end{empheq}
		
		By Corollary~\ref{parrott-vect-ligne}, there exists a contraction $W \in \mathcal{B}(H_2,K_1)$ such that $\mathbf{V} = \begin{bmatrix}
			V_1 & D_{V_1^*}W
		\end{bmatrix}$. The operator $W$ can be chosen such that $\text{Im}(D_{V_1^*}) \subset \text{Ker}(W^*)$ (in that case, the minimal solution $W_0$ is unique).
		
		Then, \eqref{A=VUD_B} is equivalent to
		\begin{align}
			\begin{bmatrix} A & B \end{bmatrix}  & = \begin{bmatrix}
				V_1 & D_{V_1^*}W \end{bmatrix} \begin{bmatrix}D_C & -C^*Y_0 \\ 0 & D_{Y_0} \end{bmatrix} \nonumber \\
			& = \begin{bmatrix} V_1 D_C & -V_1C^*Y_0 + D_{V_1^*}WD_{Y_0} \end{bmatrix}. \label{penultieme_etape_parrott}
		\end{align}
		
		In particular, we have $A=V_1D_C$. We now show that $V_1=Z_0$.
		
		\setcounter{fait}{0}
		\begin{fait} \label{fait}
			$\text{Im}(D_C)^{\perp} \oplus \{0\} \subset \text{Im}(\mathbf{S})^{\perp}$.
		\end{fait}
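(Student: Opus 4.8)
The plan is to verify the inclusion directly by testing vectors of the form $(u,0)$, with $u\in\text{Im}(D_C)^{\perp}$, against the range of $\mathbf{S}$. Since $\mathbf{S}(x,y)=(D_Cx-C^*Y_0y,\,D_{Y_0}y)$ for $(x,y)\in H_1\oplus H_2$, I would first reduce the claim to the identity
\[
\ps{(u,0),\mathbf{S}(x,y)} = \ps{u,D_Cx}-\ps{u,C^*Y_0y},
\]
and then show that both terms on the right-hand side vanish for all $x\in H_1$ and $y\in H_2$.

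The first term is immediate: as $D_C$ is self-adjoint, we have $\text{Im}(D_C)^{\perp}=\text{Ker}(D_C)$, hence $D_Cu=0$ and consequently $\ps{u,D_Cx}=\ps{D_Cu,x}=0$.

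The second term is the crux. I would start from $u\in\text{Ker}(D_C)=\text{Ker}(\id-C^*C)$, i.e. $C^*Cu=u$, and apply $C$ to obtain $CC^*(Cu)=C(C^*Cu)=Cu$; thus $Cu\in\text{Ker}(\id-CC^*)=\text{Im}(D_{C^*})^{\perp}$. At this point the minimality of $Y_0$ enters: recall that $Y_0$ is the minimal solution of $D=D_{C^*}Y$ furnished by \Cref{parrott-vect-ligne} applied to the row contraction $\begin{bmatrix} C & D\end{bmatrix}$, so that $\text{Im}(D_{C^*})^{\perp}\subset\text{Ker}(Y_0^*)$. Hence $Y_0^*Cu=0$, and therefore
\[
\ps{u,C^*Y_0y}=\ps{Cu,Y_0y}=\ps{Y_0^*Cu,y}=0
\]
for every $y$. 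Combining the two computations yields $(u,0)\in\text{Im}(\mathbf{S})^{\perp}$, which is the asserted inclusion.

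I expect the second term to be the only genuine obstacle. The naive attempt to prove $Cu=0$ fails, because $u\in\text{Ker}(D_C)$ says exactly that $C$ acts isometrically on $u$, not that it annihilates $u$. The way around this is to notice that $Cu$ nevertheless lands in $\text{Im}(D_{C^*})^{\perp}$, and then to invoke precisely the normalization of $Y_0$ built into \Cref{parrott-vect-ligne}, namely that $Y_0^*$ vanishes on $\text{Im}(D_{C^*})^{\perp}$. This is what makes the minimal-norm clause genuinely necessary here, rather than a mere convenience.
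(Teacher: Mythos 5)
Your proof is correct and follows essentially the same route as the paper's: both arguments reduce the claim to showing $Y_0^*Cu=0$ (testing against $\operatorname{Im}(\mathbf{S})$ is the same as computing $\mathbf{S}^*$ on the vector), both prove that $Cu$ lies in $\operatorname{Ker}(D_{C^*})=\operatorname{Im}(D_{C^*})^{\perp}$, and both conclude via the minimality property $\operatorname{Im}(D_{C^*})^{\perp}\subset\operatorname{Ker}(Y_0^*)$ of $Y_0$. The only minor variation is in the middle step: you argue algebraically from $C^*Cu=u$ that $CC^*(Cu)=Cu$, whereas the paper invokes the intertwining identity $CD_C=D_{C^*}C$ and computes $\left\|D_{C^*}Cu\right\|^2=\langle Cu, CD_C^2u\rangle=0$ --- two equally valid ways of reaching the same point.
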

		
  \begin{proof}
			Let $v \in \text{Im}(D_C)^{\perp} = \text{Ker}(D_C)$. 
			In order to prove that $\begin{bmatrix}
				v \\ 0
			\end{bmatrix} \in \text{Ker}(\mathbf{S^*}) = \text{Im}(\mathbf{S})^{\perp}$, notice that we have 
			\begin{align*}
				\mathbf{S^*}\begin{bmatrix}
					v \\ 0
				\end{bmatrix} = \begin{bmatrix}
					0 \\ -Y_0^*Cv
				\end{bmatrix}.
			\end{align*}
			As we know that $Y_0^*=0$ on $\text{Im}(D_{C^*})^{\perp} = \text{Ker}(D_{C^*})$, it is enough to show $Cv \in \text{Ker}(D_{C^*})$. Using again the identity $CD_C = D_{C^*}C$, we have
			\begin{align*}
				\|D_{C^*}Cv\|^2 = \ps{D_{C^*}Cv, D_{C^*}Cv} = \ps{Cv, D_{C^*}^2Cv} = \ps{Cv, C D_C^2v} = 0 ,
			\end{align*}
   which completes the proof of the Fact~\ref{fait}.
		\end{proof}
		Continuing the proof of Theorem \ref{parrott}, we can deduce from \eqref{v=0orth} that $V_1=0$ on $\text{Im}(D_C)^{\perp}$ and, thus, $V_1=Z_0$. Finally, \eqref{penultieme_etape_parrott} is equivalent to 
				\begin{equation*}
			 B = -Z_0C^*Y_0 + D_{Z_0^*}WD_{Y_0}.
		\end{equation*}
		
		 Conversely, if there exists a contraction $W \in \mathcal{B}(H_2,K_1)$ such that $B=D_{Z_0^*}VD_{Y_0} - Z_0C^*Y_0$, then it is easy to check that $\mathbf{A} = \mathbf{V'} \mathbf{S} = \mathbf{V'} \mathbf{U}\mathbf{D_B}$, with $\mathbf{V'}=\begin{bmatrix}
			Z & D_{Z^*}W
		\end{bmatrix}$. As $\mathbf{V'}$ is a contraction (Corollary~\ref{parrott-vect-ligne}), this implies that $T$ is a contraction (Corollary~\ref{parrott-col-iso}).
\end{proof}

\backmatter

\printbibliography

@article {Abate,
    AUTHOR = {Abate, Marco},
     TITLE = {Multipoint {J}ulia theorems},
   JOURNAL = {Atti Accad. Naz. Lincei Rend. Lincei Mat. Appl.},
  FJOURNAL = {Atti della Accademia Nazionale dei Lincei. Rendiconti Lincei.
              Matematica e Applicazioni},
    VOLUME = {32},
      YEAR = {2021},
    NUMBER = {3},
     PAGES = {593--625},
      %ISSN = {1120-6330},
   MRCLASS = {30F45 (30C80 30D05 30E25 30J10)},
  MRNUMBER = {4355407},
MRREVIEWER = {D. Drasin},
       DOI = {10.4171/rlm/950},
       %URL = {https://doi.org/10.4171/rlm/950},
}

@article {Agler:Invent,
    AUTHOR = {Agler, Jim},
     TITLE = {Operator theory and the {C}arath\'{e}odory metric},
   JOURNAL = {Invent. Math.},
  FJOURNAL = {Inventiones Mathematicae},
    VOLUME = {101},
      YEAR = {1990},
    NUMBER = {2},
     PAGES = {483--500},
      %ISSN = {0020-9910,1432-1297},
   MRCLASS = {47A20 (32H15)},
  MRNUMBER = {1062972},
MRREVIEWER = {Joseph\ A.\ Ball},
       DOI = {10.1007/BF01231512},
       %URL = {https://doi.org/10.1007/BF01231512},
}

@article {AM:acta,
    AUTHOR = {Agler, Jim and McCarthy, John E.},
     TITLE = {Distinguished varieties},
   JOURNAL = {Acta Math.},
  FJOURNAL = {Acta Mathematica},
    VOLUME = {194},
      YEAR = {2005},
    NUMBER = {2},
     PAGES = {133--153},
      %ISSN = {0001-5962,1871-2509},
   MRCLASS = {47A13 (32C25 47A20 47A48 47A57)},
  MRNUMBER = {2231339},
MRREVIEWER = {Dmitry\ Kaliuzhnyi-Verbovetskyi},
       DOI = {10.1007/BF02393219},
       %URL = {https://doi.org/10.1007/BF02393219},
}

@book{aglerOperatorAnalysisHilbert2020,
  title = {Operator Analysis---{{Hilbert}} Space Methods in Complex Analysis},
  author = {Agler, Jim and McCarthy, John Edward and Young, Nicholas},
  date = {2020},
  series = {Cambridge {{Tracts}} in {{Mathematics}}},
  volume = {219},
  publisher = {Cambridge University Press, Cambridge},
  doi = {10.1017/9781108751292},
  %isbn = {978-1-108-48544-9},
  mrnumber = {4411370},
  pagetotal = {xv+375}
}

@article {ADR,
    AUTHOR = {Anderson, J. Milne and Dritschel, Michael A. and Rovnyak,
              James},
     TITLE = {Schwarz-{P}ick inequalities for the {S}chur-{A}gler class on
              the polydisk and unit ball},
   JOURNAL = {Comput. Methods Funct. Theory},
  FJOURNAL = {Computational Methods and Function Theory},
    VOLUME = {8},
      YEAR = {2008},
    NUMBER = {1-2},
     PAGES = {339--361},
      %ISSN = {1617-9447},
   MRCLASS = {47B32 (30C80 32A70 47A48 47N70)},
  MRNUMBER = {2419482},
MRREVIEWER = {Joseph\ A.\ Ball},
       DOI = {10.1007/BF03321692},
       %URL = {https://doi.org/10.1007/BF03321692},
}

@article {AR,
    AUTHOR = {Anderson, J. M. and Rovnyak, J.},
     TITLE = {On generalized {S}chwarz-{P}ick estimates},
   JOURNAL = {Mathematika},
  FJOURNAL = {Mathematika. A Journal of Pure and Applied Mathematics},
    VOLUME = {53},
      YEAR = {2006},
    NUMBER = {1},
     PAGES = {161--168},
      %ISSN = {0025-5793},
   MRCLASS = {30C80 (30E05 46E22)},
  MRNUMBER = {2304058},
MRREVIEWER = {Aristomenis\ Siskakis},
       DOI = {10.1112/S0025579300000085},
       %URL = {https://doi.org/10.1112/S0025579300000085},
}

@article {ArseneGheondea,
    AUTHOR = {Arsene, Gr. and Gheondea, A.},
     TITLE = {Completing matrix contractions},
   JOURNAL = {J. Operator Theory},
  FJOURNAL = {Journal of Operator Theory},
    VOLUME = {7},
      YEAR = {1982},
    NUMBER = {1},
     PAGES = {179--189},
      %ISSN = {0379-4024},
   MRCLASS = {47A20},
  MRNUMBER = {650203},
MRREVIEWER = {Heinz\ Langer},
}

@article{baribeauHyperbolicDividedDifferences2009,
  title = {On Hyperbolic Divided Differences and the {{Nevanlinna-Pick}} Problem},
  author = {Baribeau, Line and Rivard, Patrice and Wegert, Elias},
  date = {2009},
  journaltitle = {Computational Methods and Function Theory},
  shortjournal = {Comput. Methods Funct. Theory},
  volume = {9},
  number = {2},
  pages = {391--405},
  %issn = {1617-9447},
  doi = {10.1007/BF03321735},
  mrnumber = {2572646}
}

@article{beardonMultipointSchwarzPickLemma2004,
  title = {A Multi-Point {{Schwarz-Pick}} Lemma},
  author = {Beardon, A. F. and Minda, D.},
  date = {2004},
  journaltitle = {Journal d'Analyse Math\'ematique},
  shortjournal = {J. Anal. Math.},
  volume = {92},
  pages = {81--104},
  %issn = {0021-7670,1565-8538},
  doi = {10.1007/BF02787757},
  mrnumber = {2072742}
}

@article{bhatiaHowWhySolve1997,
  title = {How and Why to Solve the Operator Equation {{AX-XB}}={{Y}}},
  author = {Bhatia, Rajendra and Rosenthal, Peter},
  date = {1997},
  journaltitle = {The Bulletin of the London Mathematical Society},
  shortjournal = {Bull. London Math. Soc.},
  volume = {29},
  number = {1},
  pages = {1--21},
  %issn = {0024-6093,1469-2120},
  doi = {10.1112/S0024609396001828},
  mrnumber = {1416400}
}

@article{bickelCrouzeixConjectureRelated2020,
  title = {Crouzeix's Conjecture and Related Problems},
  author = {Bickel, Kelly and Gorkin, Pamela and Greenbaum, Anne and Ransford, Thomas and Schwenninger, Felix L. and Wegert, Elias},
  date = {2020},
  journaltitle = {Computational Methods and Function Theory},
  shortjournal = {Comput. Methods Funct. Theory},
  volume = {20},
  number = {3-4},
  pages = {701--728},
  %issn = {1617-9447,2195-3724},
  doi = {10.1007/s40315-020-00350-9},
  mrnumber = {4175507}
}

@article{choMultipointSchwarzPickLemma2012,
  title = {On a Multi-Point {{Schwarz-Pick}} Lemma},
  author = {Cho, Kyung Hyun and Kim, Seong-A and Sugawa, Toshiyuki},
  date = {2012},
  journaltitle = {Computational Methods and Function Theory},
  shortjournal = {Comput. Methods Funct. Theory},
  volume = {12},
  number = {2},
  pages = {483--499},
  %issn = {1617-9447},
  doi = {10.1007/BF03321839},
  mrnumber = {3058518}
}

@book{constantinescuSchurParametersFactorization1996,
  title = {Schur Parameters, Factorization and Dilation Problems},
  author = {Constantinescu, Tiberiu},
  date = {1996},
  series = {Operator {{Theory}}: {{Advances}} and {{Applications}}},
  volume = {82},
  publisher = {Birkh\"auser Verlag, Basel},
  %isbn = {978-3-7643-5285-1},
  mrnumber = {1399080},
  pagetotal = {x+253}
}

@article{davisNormpreservingDilationsTheir1982,
  title = {Norm-Preserving Dilations and Their Applications to Optimal Error Bounds},
  author = {Davis, Chandler and Kahan, W. M. and Weinberger, H. F.},
  date = {1982},
  journaltitle = {SIAM Journal on Numerical Analysis},
  shortjournal = {SIAM J. Numer. Anal.},
  volume = {19},
  number = {3},
  pages = {445--469},
  %issn = {0036-1429},
  doi = {10.1137/0719029},
  mrnumber = {656462}
}

@article{douglasMajorizationFactorizationRange1966,
  title = {On Majorization, Factorization, and Range Inclusion of Operators on {{Hilbert}} Space},
  author = {Douglas, R. G.},
  date = {1966},
  journaltitle = {Proceedings of the American Mathematical Society},
  shortjournal = {Proc. Amer. Math. Soc.},
  volume = {17},
  pages = {413--415},
  %issn = {0002-9939,1088-6826},
  doi = {10.2307/2035178},
  mrnumber = {203464}
}

@incollection{druryRemarksNeumannInequality1983,
  title = {Remarks on von {{Neumann}}'s Inequality},
  booktitle = {Banach Spaces, Harmonic Analysis, and Probability Theory ({{Storrs}}, {{Conn}}., 1980/1981)},
  author = {Drury, S. W.},
  date = {1983},
  series = {Lecture {{Notes}} in {{Math}}.},
  volume = {995},
  pages = {14--32},
  publisher = {Springer, Berlin},
  doi = {10.1007/BFb0061886},
  %isbn = {978-3-540-12314-9},
  mrnumber = {717226}
}

@incollection {history,
    AUTHOR = {Elin, Mark and Jacobzon, Fiana and Levenshtein, Marina and
              Shoikhet, David},
     TITLE = {The {S}chwarz lemma: rigidity and dynamics},
 BOOKTITLE = {Harmonic and complex analysis and its applications},
    SERIES = {Trends Math.},
     PAGES = {135--230},
 PUBLISHER = {Birkh\"{a}user/Springer, Cham},
      YEAR = {2014},
      %ISBN = {978-3-319-01805-8; 978-3-319-01806-5},
   MRCLASS = {30C80},
  MRNUMBER = {3203101},
MRREVIEWER = {Stephen\ M.\ Zemyan},
       DOI = {10.1007/978-3-319-01806-5\_3},
       %URL = {https://doi.org/10.1007/978-3-319-01806-5_3},
}

@article{FanAnalyticfunctionsproper1978,
  title = {Analytic Functions of a Proper Contraction},
  author = {Fan, Ky},
  date = {1978},
  journaltitle = {Mathematische Zeitschrift},
  shortjournal = {Math Z},
  volume = {160},
  number = {3},
  pages = {275--290},
  %issn = {1432-1823},
  doi = {10.1007/BF01237041},
  langid = {english},
  keywords = {Analytic Function,Proper Contraction}
}

@book {FoiasFrazho,
    AUTHOR = {Foias, Ciprian and Frazho, Arthur E.},
     TITLE = {The commutant lifting approach to interpolation problems},
    SERIES = {Operator Theory: Advances and Applications},
    VOLUME = {44},
 PUBLISHER = {Birkh\"{a}user Verlag, Basel},
      YEAR = {1990},
     PAGES = {xxiv+632},
      %ISBN = {3-7643-2461-9},
   MRCLASS = {47A57 (30E05 47-02 47A20 47N70 93B28)},
  MRNUMBER = {1120546},
       DOI = {10.1007/978-3-0348-7712-1},
       %URL = {https://doi.org/10.1007/978-3-0348-7712-1},
}

@book{garciaIntroductionModelSpaces2016,
  title = {Introduction to Model Spaces and Their Operators},
  author = {Garcia, Stephan Ramon and Mashreghi, Javad and Ross, William T.},
  date = {2016},
  series = {Cambridge {{Studies}} in {{Advanced Mathematics}}},
  volume = {148},
  publisher = {Cambridge University Press, Cambridge},
  doi = {10.1017/CBO9781316258231},
  %isbn = {978-1-107-10874-5},
  mrnumber = {3526203},
  pagetotal = {xv+322}
}

@thesis{gupta,
  type = {phdthesis},
  title = {The Carath\'eodory-F\'ejer Interpolation Problems and the von-{{Neumann}} Inequality},
  author = {Gupta, Rajeev},
  date = {2015},
  institution = {Department of Mathematics, Indian Institute of Science, Bangalore}
}

@article{jocicNoncommutativeSchwarzLemma2022,
  title = {Noncommutative {{Schwarz}} Lemma and {{Pick-Julia}} Theorems for Generalized Derivations in Norm Ideals of Compact Operators},
  author = {Joci\'c, Danko R.},
  date = {2022},
  journaltitle = {Complex Analysis and Operator Theory},
  shortjournal = {Complex Anal. Oper. Theory},
  volume = {16},
  number = {8},
  pages = {Paper No. 111, 23},
  %issn = {1661-8254,1661-8262},
  doi = {10.1007/s11785-022-01287-8},
  mrnumber = {4502819}
}

@article {JKM,
    AUTHOR = {Jury, Michael T. and Knese, Greg and McCullough, Scott},
     TITLE = {Nevanlinna-{P}ick interpolation on distinguished varieties in
              the bidisk},
   JOURNAL = {J. Funct. Anal.},
  FJOURNAL = {Journal of Functional Analysis},
    VOLUME = {262},
      YEAR = {2012},
    NUMBER = {9},
     PAGES = {3812--3838},
      %ISSN = {0022-1236,1096-0783},
   MRCLASS = {47A57 (30E05 32A36 32E30 47A48)},
  MRNUMBER = {2899979},
MRREVIEWER = {Sanne\ ter Horst},
       DOI = {10.1016/j.jfa.2012.01.028},
       %URL = {https://doi.org/10.1016/j.jfa.2012.01.028},
}

@article{kimInvariantDifferentialOperators2007,
  title = {Invariant Differential Operators Associated with a Conformal Metric},
  author = {Kim, Seong-A and Sugawa, Toshiyuki},
  date = {2007},
  journaltitle = {Michigan Mathematical Journal},
  shortjournal = {Michigan Math. J.},
  volume = {55},
  number = {2},
  pages = {459--479},
  %issn = {0026-2285,1945-2365},
  doi = {10.1307/mmj/1187647003},
  mrnumber = {2369945}
}

@article {kneseSchwarzLemmaPolydisk2007a,
    AUTHOR = {Knese, Greg},
     TITLE = {A {S}chwarz lemma on the polydisk},
   JOURNAL = {Proc. Amer. Math. Soc.},
  FJOURNAL = {Proceedings of the American Mathematical Society},
    VOLUME = {135},
      YEAR = {2007},
    NUMBER = {9},
     PAGES = {2759--2768},
      %ISSN = {0002-9939,1088-6826},
   MRCLASS = {32A30 (30C80 47A57)},
  MRNUMBER = {2317950},
MRREVIEWER = {Jos\'{e}\ Pastor\ Gim\'{e}nez},
       DOI = {10.1090/S0002-9939-07-08766-7},
       %URL = {https://doi.org/10.1090/S0002-9939-07-08766-7},
}

@article{kneseNeumannInequalityTimes2016,
  author = {Knese, Greg},
  date = {2016},
  journaltitle = {Bulletin of the London Mathematical Society},
  shortjournal = {Bull. Lond. Math. Soc.},
  volume = {48},
  number = {1},
  pages = {53--57},
  %issn = {0024-6093,1469-2120},
  doi = {10.1112/blms/bdv087},
  mrnumber = {3455747},
  title = {The von {{Neumann}} Inequality for {$3\times 3$} Matrices}
}

@article {McC1,
    AUTHOR = {MacCluer, Barbara D. and Stroethoff, Karel and Zhao, Ruhan},
     TITLE = {Generalized {S}chwarz-{P}ick estimates},
   JOURNAL = {Proc. Amer. Math. Soc.},
  FJOURNAL = {Proceedings of the American Mathematical Society},
    VOLUME = {131},
      YEAR = {2003},
    NUMBER = {2},
     PAGES = {593--599},
      %ISSN = {0002-9939,1088-6826},
   MRCLASS = {30C80 (30D45 47B38)},
  MRNUMBER = {1933351},
MRREVIEWER = {Yuri\ A.\ Farkov},
       DOI = {10.1090/S0002-9939-02-06588-7},
       %URL = {https://doi.org/10.1090/S0002-9939-02-06588-7},
}

@book {Nikolski,
    AUTHOR = {Nikolski, N. K.},
     TITLE = {Treatise on the shift operator},
    SERIES = {Grundlehren der mathematischen Wissenschaften [Fundamental
              Principles of Mathematical Sciences]},
    VOLUME = {273},
      NOTE = {Spectral function theory,
              With an appendix by S. V. Hru\v{s}\v{c}ev [S. V.
              Khrushch\"{e}v] and V. V. Peller,
              Translated from the Russian by Jaak Peetre},
 PUBLISHER = {Springer-Verlag, Berlin},
      YEAR = {1986},
     PAGES = {xii+491},
      %ISBN = {3-540-15021-8},
   MRCLASS = {47B37 (30H05 47-02 47B35)},
  MRNUMBER = {827223},
MRREVIEWER = {James\ Rovnyak},
       DOI = {10.1007/978-3-642-70151-1},
       %URL = {https://doi.org/10.1007/978-3-642-70151-1},
}

@incollection {NikVas,
    AUTHOR = {Nikolski, Nikolai and Vasyunin, Vasily},
     TITLE = {Elements of spectral theory in terms of the free function
              model. {I}. {B}asic constructions},
 BOOKTITLE = {Holomorphic spaces ({B}erkeley, {CA}, 1995)},
    SERIES = {Math. Sci. Res. Inst. Publ.},
    VOLUME = {33},
     PAGES = {211--302},
 PUBLISHER = {Cambridge Univ. Press, Cambridge},
      YEAR = {1998},
      ISBN = {0-521-63193-9},
   MRCLASS = {47-02 (47A10 47A15 47A45)},
  MRNUMBER = {1630652},
}

@article{parrottQuotientNormSz1978,
  title = {On a quotient norm and the {S}z.-{N}agy-
              {F}oia\c{s} lifting theorem},
  author = {Parrott, Stephen},
  date = {1978},
  journaltitle = {Journal of Functional Analysis},
  shortjournal = {J. Functional Analysis},
  volume = {30},
  number = {3},
  pages = {311--328},
  %issn = {0022-1236},
  doi = {10.1016/0022-1236(78)90060-5},
  mrnumber = {518338}
}

@article{paulsenBohrInequality2002,
  title = {On {{Bohr}}'s Inequality},
  author = {Paulsen, Vern I. and Popescu, Gelu and Singh, Dinesh},
  date = {2002},
  journaltitle = {Proceedings of the London Mathematical Society. Third Series},
  shortjournal = {Proc. London Math. Soc. (3)},
  volume = {85},
  number = {2},
  pages = {493--512},
  %issn = {0024-6115,1460-244X},
  doi = {10.1112/S0024611502013692},
  mrnumber = {1912059}
}

@book{paulsenCompletelyBoundedMaps2002,
  title = {Completely Bounded Maps and Operator Algebras},
  author = {Paulsen, Vern},
  date = {2002},
  series = {Cambridge {{Studies}} in {{Advanced Mathematics}}},
  volume = {78},
  publisher = {Cambridge University Press, Cambridge},
  mrnumber = {1976867},
  pagetotal = {xii+300}
}

@article {PtakLAA,
    AUTHOR = {Pt\'{a}k, Vlastimil},
     TITLE = {A maximum problem for matrices},
   JOURNAL = {Linear Algebra Appl.},
  FJOURNAL = {Linear Algebra and its Applications},
    VOLUME = {28},
      YEAR = {1979},
     PAGES = {193--204},
      %ISSN = {0024-3795,1873-1856},
   MRCLASS = {15A60 (15A45 47A30)},
  MRNUMBER = {549433},
MRREVIEWER = {M.\ F.\ Smiley},
       DOI = {10.1016/0024-3795(79)90132-0},
       %URL = {https://doi.org/10.1016/0024-3795(79)90132-0},
}

@article {PtakYoung,
    AUTHOR = {Pt\'{a}k, Vlastimil and Young, N. J.},
     TITLE = {Functions of operators and the spectral radius},
   JOURNAL = {Linear Algebra Appl.},
  FJOURNAL = {Linear Algebra and its Applications},
    VOLUME = {29},
      YEAR = {1980},
     PAGES = {357--392},
   MRCLASS = {47A30 (15A60 47A60)},
  MRNUMBER = {562769},
MRREVIEWER = {Lawrence\ R.\ Williams},
       DOI = {10.1016/0024-3795(80)90250-5},
       %URL = {https://doi.org/10.1016/0024-3795(80)90250-5},
}

@article {RivardPAMS,
    AUTHOR = {Rivard, Patrice},
     TITLE = {A {S}chwarz-{P}ick theorem for higher-order hyperbolic
              derivatives},
   JOURNAL = {Proc. Amer. Math. Soc.},
  FJOURNAL = {Proceedings of the American Mathematical Society},
    VOLUME = {139},
      YEAR = {2011},
    NUMBER = {1},
     PAGES = {209--217},
      %ISSN = {0002-9939},
   MRCLASS = {30F45 (30C80)},
  MRNUMBER = {2729084},
MRREVIEWER = {William Ma},
       DOI = {10.1090/S0002-9939-2010-10488-4},
       %URL = {https://doi.org/10.1090/S0002-9939-2010-10488-4},
}

@article {RivardCAOT,
    AUTHOR = {Rivard, Patrice},
     TITLE = {Some applications of higher-order hyperbolic derivatives},
   JOURNAL = {Complex Anal. Oper. Theory},
  FJOURNAL = {Complex Analysis and Operator Theory},
    VOLUME = {7},
      YEAR = {2013},
    NUMBER = {4},
     PAGES = {1127--1156},
      %ISSN = {1661-8254},
   MRCLASS = {30F45 (30C80 30E10)},
  MRNUMBER = {3079846},
MRREVIEWER = {William Ma},
       DOI = {10.1007/s11785-011-0172-z},
       %URL = {https://doi.org/10.1007/s11785-011-0172-z},
}

@book{rosenblumHardyClassesOperator1997,
  title = {Hardy Classes and Operator Theory},
  author = {Rosenblum, Marvin and Rovnyak, James},
  date = {1997},
  publisher = {Dover Publications, Inc., Mineola, NY},
  mrnumber = {1435287},
  pagetotal = {xiv+161}
}

@article{rosenblumOperatorEquationBXXA1956,
  title = {On the Operator Equation {{BX-XA}}={{Q}}},
  author = {Rosenblum, Marvin},
  date = {1956},
  journaltitle = {Duke Mathematical Journal},
  shortjournal = {Duke Math. J.},
  volume = {23},
  pages = {263--269},
  mrnumber = {79235}
}

@book{rudinFunctionTheoryPolydiscs1969,
  title = {Function Theory in Polydiscs},
  author = {Rudin, Walter},
  date = {1969},
  publisher = {W. A. Benjamin, Inc., New York-Amsterdam},
  mrnumber = {255841},
  pagetotal = {vii+188}
}

@article {ruscheweyhSerdica,
    AUTHOR = {Ruscheweyh, St.},
     TITLE = {Two remarks on bounded analytic functions},
   JOURNAL = {Serdica},
  FJOURNAL = {Serdica. Bulgaricae Mathematicae Publicationes},
    VOLUME = {11},
      YEAR = {1985},
    NUMBER = {2},
     PAGES = {200--202},
   MRCLASS = {30B10},
  MRNUMBER = {817957},
MRREVIEWER = {G.\ P.\ Bhargava},
}

@article {sarasonGeneralizedInterpolationSpinfty1967,
    AUTHOR = {Sarason, Donald},
     TITLE = {Generalized interpolation in {$H\sp{\infty }$}},
   JOURNAL = {Trans. Amer. Math. Soc.},
  FJOURNAL = {Transactions of the American Mathematical Society},
    VOLUME = {127},
      YEAR = {1967},
     PAGES = {179--203},
   MRCLASS = {46.55 (30.00)},
  MRNUMBER = {208383},
MRREVIEWER = {Ronald\ G.\ Douglas},
       DOI = {10.2307/1994641},
       %URL = {https://doi.org/10.2307/1994641},
}

@book {Simon,
    AUTHOR = {Simon, Barry},
     TITLE = {Loewner's the\-o\-rem on monotone matrix functions},
    SERIES = {Grundlehren der math\-e\-ma\-tischen Wis\-senschaften [Fundamental
              Principles of Mathematical Sciences]},
    VOLUME = {354},
 PUBLISHER = {Springer, Cham},
      YEAR = {2019},
     PAGES = {xi+459},
      %ISBN = {978-3-030-22421-9; 978-3-030-22422-6},
   MRCLASS = {47A63 (26A48 30B40 30E05 30E10 30J10)},
  MRNUMBER = {3969971},
MRREVIEWER = {Linda\ J.\ Patton},
       DOI = {10.1007/978-3-030-22422-6},
       %URL = {https://doi.org/10.1007/978-3-030-22422-6},
}

@article {Szehr,
    AUTHOR = {Szehr, Oleg},
     TITLE = {Eigenvalue estimates for the resolvent of a non-normal matrix},
   JOURNAL = {J. Spectr. Theory},
  FJOURNAL = {Journal of Spectral Theory},
    VOLUME = {4},
      YEAR = {2014},
    NUMBER = {4},
     PAGES = {783--813},
   MRCLASS = {15A42 (65N35)},
  MRNUMBER = {3299814},
MRREVIEWER = {Li\ Min\ Zou},
       DOI = {10.4171/JST/86},
       %URL = {https://doi.org/10.4171/JST/86},
}

@article{yamashitaPickVersionSchwarz1994,
  title = {The {{Pick}} Version of the {{Schwarz}} Lemma and Comparison of the {{Poincar\'e}} Densities},
  author = {Yamashita, Shinji},
  date = {1994},
  journaltitle = {Annales Academiae Scientiarum Fennicae. Series A I. Mathematica},
  shortjournal = {Ann. Acad. Sci. Fenn. Ser. A I Math.},
  volume = {19},
  number = {2},
  pages = {291--322},
  mrnumber = {1274084}
}

@article {Young2,
    AUTHOR = {Young, N. J.},
     TITLE = {Analytic programmes in matrix algebras},
   JOURNAL = {Proc. London Math. Soc. (3)},
  FJOURNAL = {Proceedings of the London Mathematical Society. Third Series},
    VOLUME = {36},
      YEAR = {1978},
    NUMBER = {2},
     PAGES = {226--242},
   MRCLASS = {15A60 (47A30)},
  MRNUMBER = {484965},
MRREVIEWER = {V.\ Pt\'{a}k},
DOI = {10.1112/plms/s3-36.2.226},
       %URL = {https://doi.org/10.1112/plms/s3-36.2.226},
}

@book{youngIntroductionHilbertSpace1988,
  title = {An Introduction to {{Hilbert}} Space},
  author = {Young, Nicholas},
  date = {1988},
  series = {Cambridge {{Mathematical Textbooks}}},
  publisher = {Cambridge University Press, Cambridge},
  doi = {10.1017/CBO9781139172011},
  mrnumber = {949693},
  pagetotal = {x+239}
}
			
\end{document}